\numberwithin{equation}{section}
\theoremstyle{plain}
\newtheorem{theorem}{Theorem}[section]
\newtheorem{lemma}[theorem]{Lemma}
\newtheorem{corollary}[theorem]{Corollary}
\newtheorem{proposition}[theorem]{Proposition}
\newtheorem{question}[]{Question}
\theoremstyle{definition}
\newtheorem{definition}[theorem]{Definition}
\theoremstyle{remark}
\newtheorem{remark}[theorem]{Remark}
\renewcommand{\P}{\mathbb P}
\newcommand{\E}{\mathbb E}
\renewcommand{\H}{\mathbb H}
\newcommand{\R}{\mathbb R}
\newcommand{\Z}{\mathbb Z}
\newcommand{\N}{\mathbb N}
\newcommand{\ep}{\varepsilon}
\renewcommand{\d}{\mathrm{d}}
\newcommand{\de}{\mathrm{d}_{\mathbb{R}^d}}
\newcommand{\dm}{\mathrm{d}_{M}}
\newcommand{\wt}{\widetilde}
\newcommand{\one}{\mathbbm{1}}
\DeclareMathOperator{\PPP}{PPP}
\DeclareMathOperator{\diam}{diam}
\DeclareMathOperator{\im}{im}
\DeclareMathOperator{\dom}{dom}
\newcommand{\lr}[4]{#3\xleftrightarrow[#1]{#2} #4}
\newcommand{\nlr}[4]{#3\centernot{\xleftrightarrow[#1]{#2}} #4}
\DeclareMathAlphabet{\pazocal}{OMS}{zplm}{m}{n}
\title{High-intensity Voronoi percolation on manifolds}
\author{Tillmann B\"uhler\footnotemark[1]\footnote{Karlsruhe Institute of Technology, tillmann.buehler@kit.edu}, Barbara Dembin\footnotemark[2]\footnote{CNRS and University of Strasbourg, barbara.dembin@math.unistra.fr}, Ritvik Ramanan Radhakrishnan\footnotemark[3]\footnote{ETH Z\"{u}rich, ritvik.radhakrishnan@math.ethz.ch}, \and Franco Severo\footnotemark[4]\footnote{Institut Camille Jordan, severo@math.univ-lyon1.fr}}
\date{}
\begin{document}
\thispagestyle{empty}

\maketitle

\begin{abstract} 
We study Voronoi percolation on a large class of $d$-dimensional Riemannian manifolds, which includes the hyperbolic spaces $\H^d$, $d\geq 2$. We prove that as the intensity $\lambda$ of the underlying Poisson point process tends to infinity, both critical parameters $p_c(M,\lambda)$ and $p_u(M,\lambda)$ converge to the Euclidean critical parameter $p_c(\mathbb{R}^d)$. 
This extends a recent result of Hansen \& Müller in the special case $M=\mathbb{H}^2$ to a general class of manifolds of arbitrary dimension. A crucial step in our proof, which may be of independent interest, is to show that if $M$ is simply connected and one-ended, then embedded graphs induced by a general class of tessellations on $M$ have connected minimal cutsets. In particular, this result applies to $\varepsilon$-nets, allowing us to implement a ``fine-graining'' argument.
\end{abstract}

\section{Introduction}\label{sec:intro}

Let $(M,g)$ be a connected, complete, non-compact, $d$-dimensional Riemannian manifold and let $\dm$ and $\mu_{M}$ be the associated distance function and volume measure, respectively. 
Given $\lambda>0$, let $\eta_{\lambda}$ be a Poisson point process with intensity $\lambda\mu_{M}$ on $M$, and consider the associated Voronoi tessellation whose cells are defined as
\begin{equation*}
C(x;\eta_\lambda)\coloneqq\{y\in M:\, \dm(y,x)\leq \dm(y,\eta_\lambda)\}, ~~ x\in{\eta_\lambda}.
\end{equation*}
Note that this indeed defines a partition of $M$ since $\eta_\lambda$ is a.s.\ a locally finite set (i.e., $\eta_\lambda\cap B$ is finite for any bounded $B\subset M$).
Under mild additional conditions on $M$, these cells are all compact almost surely (see Appendix~\ref{app:bounded cells} for a proof).
In Voronoi percolation, given $p\in[0,1]$, we independently color each cell white or black with probabilities $p$ and $1-p$, respectively. We are then interested in understanding the connected components (or clusters) of the random set corresponding to the union of the white cells, i.e.,
$$\pazocal{V}_{M}(\lambda,p) \coloneqq \bigcup_{x\in \eta_\lambda \text{ white}} C(x;\eta_\lambda).$$
Let $\pazocal{N}_{M}(\lambda,p)$ be the number of infinite (i.e., unbounded) clusters of $\pazocal{V}_{M}(\lambda,p)$. The critical parameter for the emergence of an infinite cluster is defined as
\begin{equation*}
    p_c(M,\lambda)\coloneqq\sup\{p\in[0,1]: \P(\pazocal{N}_{M}(\lambda,p)\geq 1)=0\}.
\end{equation*}
Since $\{\pazocal{N}_{M}(\lambda,p)\geq 1\}$ is an increasing tail event, there is no infinite cluster almost surely for $p<p_c$, while there is at least one infinite cluster almost surely for $p>p_c$. Note that Voronoi percolation can also be viewed as standard Bernoulli site percolation on the so-called Voronoi (or Delaunay) graph $\pazocal{G}_{M}(\eta_\lambda)=(V(\eta_\lambda),E(\eta_\lambda))$ induced by the Voronoi tessellation associated to $\eta_\lambda$, where $V(\eta_\lambda)=\eta_\lambda$ and $\{x,y\}\in E(\eta_\lambda)$ if and only if $C(x;\eta_\lambda)\cap C(y,\eta_\lambda)\neq\emptyset$.

The special case $M=\R^d$ (with the standard Euclidean metric) has been studied extensively. This model was first investigated in \cite{VW90} in the context of first-passage percolation, and the existence of a non-trivial phase transition (i.e.,~$p_c\in(0,1)$) was obtained in \cite{BBQ05}.
It is easy to see that the following scaling property holds: $\pazocal{V}_{\R^d}(\lambda,p)$ has the same distribution as $\lambda^{-1/d} \pazocal{V}_{\R^d}(1,p)$. In particular, $p_c(\R^d,\lambda)$ does not depend on $\lambda$; we simply write $p_c(\R^d)$ for this common value. Furthermore, since Euclidean space is amenable, one can show that $\pazocal{N}_{\R^d}(p)\leq1$ almost surely for every $p\in[0,1]$ (see, e.g., \cite[p.~278]{BR_percolation}). In the planar case $d=2$, one can describe the model more precisely using self-duality. Bollob\'as \& Riordan \cite{BR_Voronoi_percolation} showed that
\begin{equation}\label{eq:p_c=1/2}
	p_c(\R^2)=1/2.
\end{equation}
Furthermore, there is no infinite component at the critical point \cite{zvavitch}. An even more detailed analysis of the critical behavior on $\R^2$ has been the subject of several subsequent works, e.g.,~\cite{Tas16,AGMT12,AB18,Van19}.  In higher dimensions, the behavior at criticality remains an extremely challenging open problem. In contrast, the subcritical ($p<p_c$) and supercritical ($p>p_c$) regimes are now relatively well understood due to \emph{sharpness results} of Duminil-Copin, Raoufi \& Tassion \cite{DRT17} and Dembin \& Severo \cite{DS25}, respectively. These works are crucial ingredients in the proof of our main result, Theorem~\ref{thm:main} below. We explain this in more detail later.

For general manifolds, when an infinite cluster exists, it may or may not be unique. We consider the uniqueness critical parameter
\begin{equation*}
	p_u(M,\lambda)\coloneqq\sup\{p\in[0,1]:\P(\pazocal{N}_{M}(\lambda,p)=1)<1\}.
\end{equation*}
By definition, there exists a unique infinite cluster almost surely for every $p>p_u~(\geq p_c)$. What happens for $p\in[p_c,p_u]$ is less clear without any further assumption on $M$. 
If one assumes that $M$ is homogeneous (i.e.,~its isometry group $\mathrm{Isom}(M)$ acts transitively on $M$), then ergodicity implies that the $\mathrm{Isom}(M)$-invariant random variable $\pazocal{N}_{M}(\lambda,p)$ takes a deterministic value almost surely, which in turn, by a standard insertion tolerant argument, is either $0$, $1$ or $\infty$ (see the proofs of Lemma 7.4 and Theorem 7.5 of \cite{LP16}).\footnote{In fact, it is enough to assume that the orbits of $\mathrm{Isom}(M)$ are unbounded.} If $M$ admits a transitive and unimodular group of isometries, then $\pazocal{G}_{M}(\eta_\lambda)$ turns out to be a unimodular random network, as defined in \cite{aldouslyons} (see Example 9.5 therein). Thus, several results from \cite{aldouslyons} apply in this context. 
For instance, it follows directly from \cite[Theorem 6.7]{aldouslyons} that $\pazocal{N}_{M}(\lambda,p)=\infty$ almost surely for every $p\in(p_c(M,\lambda),p_u(M,\lambda))$. For certain symmetric spaces, the uniqueness phase is characterized by long-range order, see~\cite[Theorem 1.9]{GR25}.

As mentioned above, we always have uniqueness on $\R^d$, so $p_u(\R^d)=p_c(\R^d)$. For other manifolds however, we may have $p_c(M,\lambda)<p_u(M,\lambda)$.
An emblematic example for which this has been established is the hyperbolic plane $\H^2$. \mbox{Benjamini} \& Schramm \cite{BS01} proved, among other things, that for every $\lambda>0$ we have 
\begin{equation}\label{eq:H^2_pu=1-pc}
	p_u(\H^2,\lambda)= 1-p_c(\H^2,\lambda)>1/2,
\end{equation}
which implies $p_c(\H^2,\lambda)<p_u(\H^2,\lambda)$.
Proving the existence of a non-uniqueness phase in greater generality remains a very interesting and challenging open problem, even for Bernoulli percolation on transitive graphs. We discuss this in more detail later in the introduction.

Intuitively, for large intensity $\lambda$, the Poisson--Voronoi tessellation on $M$ should look \emph{locally} (but never globally for a fixed $\lambda$) like a Poisson--Voronoi tessellation on $\R^d$. In fact, we can interpret $\lambda^{1/d}$ as the inverse curvature of the associated graph. It is widely believed, but rather hard to prove, that percolation critical points are continuous functions of the underlying graph in an appropriate sense. An important example of such a statement is the famous \emph{locality conjecture} on transitive graphs, which was proposed by Schramm and recently proved by Easo \& Hutchcroft \cite{EH23} (see also \cite{MT17locality,CMT23locality} for earlier progress on this problem, and \cite{BPT17} for counterexamples in the context of unimodular random graphs).
It is then natural to expect that $p_c(M,\lambda)\to p_c(\R^d)$ as $\lambda\to \infty$ for any reasonable $d$-dimensional manifold $M$. For instance, Benjamini \& Schramm \cite{BS01} conjectured that 
$$p_c(\H^2,\lambda)\to 1/2~(=p_c(\R^2))~ \text{ as } \lambda\to \infty,$$
which was recently proved by Hansen \& Müller \cite{HM_large_int}. Notice that by \eqref{eq:H^2_pu=1-pc} it directly follows that $p_u(\H^2,\lambda)\to 1/2$ as well.

Our main theorem shows that this locality property of $p_c(M,\lambda)$ and $p_u(M,\lambda)$ holds for a very general class of manifolds of arbitrary dimension. In particular, this class includes all hyperbolic spaces $\H^d$, $d\geq2$, thus proving \cite[Conjecture 4.6]{HM24}.

\begin{theorem}\label{thm:main}
Let $M$ be a connected, complete, $d$-dimensional Riemannian manifold satisfying the following conditions 
\begin{enumerate}[i)]

    \item\label{thm cond: global injectivity radius} $M$ has positive global injectivity radius \(r_\text{inj}(M)>0\),\footnote{See Section \ref{sec:localcomparison} or \cite[p.~165-6]{LeeRM} for a precise definition.}

    \item\label{thm cond: uniform bound on curvature} the sectional curvatures of M are uniformly bounded from above and from below,\footnote{see, e.g., \cite[p.~250]{LeeRM} for a definition and an introduction to sectional curvatures.}
    
    \item\label{thm cond : connected+one-ended} $M$ is simply connected and one-ended,\footnote{We call $M$ \emph{one-ended} when $M\setminus B$ has exactly one unbounded connected component for any bounded $B\subset M$.}
    
    \item\label{thm cond: log expansion} there exists a log-expanding, bi-infinite path in $M$, i.e., a Lipschitz continuous path $\gamma:(-\infty,\infty)\to M$ satisfying, for some constants $c,C\in(0,\infty)$, the inequality
        \begin{equation}
            \dm(\gamma(t),\gamma(s))\geq c\log |t-s|-C ~~~ \text{ for all } t,s\in(-\infty,\infty).
        \end{equation} 
\end{enumerate} 

Then, both $p_c(M,\lambda)$ and $p_u(M,\lambda)$ converge to $p_c(\R^d)$ as $\lambda\rightarrow \infty$.
\end{theorem}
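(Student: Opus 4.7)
The plan is to prove the two one-sided bounds $\liminf_{\lambda\to\infty} p_c(M,\lambda) \geq p_c(\R^d)$ and $\limsup_{\lambda\to\infty} p_u(M,\lambda) \leq p_c(\R^d)$; combined with the trivial $p_c(M,\lambda) \leq p_u(M,\lambda)$, these squeeze both critical parameters to $p_c(\R^d)$. The first bound is essentially a local-comparison argument from the Euclidean subcritical regime, while the second is a uniqueness statement at high intensity and carries the main substance of the theorem.

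For the subcritical bound, fix $p < p_c(\R^d)$. The sharpness theorem of Duminil-Copin–Raoufi–Tassion \cite{DRT17} gives an exponential tail on the diameter of the white Voronoi cluster of the origin in $\R^d$. Using the scaling relation recalled in the introduction, this becomes an $e^{-cr}$ upper bound, at intensity $\lambda$, on the probability that the cluster of a typical cell reaches distance $r\lambda^{-1/d}$. Hypotheses \ref{thm cond: global injectivity radius}--\ref{thm cond: uniform bound on curvature} imply that below the injectivity radius the exponential map identifies geodesic balls of $M$ with Euclidean balls bi-Lipschitzly, with distortion tending to $1$ as the radius shrinks. One can therefore couple the Poisson point process on a geodesic ball of radius $r\lambda^{-1/d}$ in $M$ with one on a comparable Euclidean ball and transfer the exponential decay to $M$, for fixed $r$ and large $\lambda$. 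A union bound over a countable cover of $M$ then rules out infinite white clusters, giving $p_c(M,\lambda)>p$ for all large $\lambda$.

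For the uniqueness bound, fix $p > p_c(\R^d)$. The supercritical sharpness of Dembin–Severo \cite{DS25} produces a ``local uniqueness'' event in any fixed Euclidean box: there is a unique macroscopic white cluster inside the box, with small failure probability. I would fine-grain $M$ by an $\varepsilon$-net at a scale of order $\lambda^{-1/d}$ times a large constant. The connected-minimal-cutset theorem advertised in the abstract, which is where hypothesis \ref{thm cond : connected+one-ended} enters, supplies a higher-dimensional substitute for planar duality: on two adjacent good $\varepsilon$-net cells the unique macroscopic white clusters must meet across the shared interface and hence coincide, so there is a single local macroscopic white cluster on any connected chain of good cells. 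To upgrade this local statement to global uniqueness, I would use the log-expanding bi-infinite path $\gamma$ from hypothesis \ref{thm cond: log expansion}: any two putative infinite white clusters must reach every tubular neighbourhood of $\gamma$, and on each good $\varepsilon$-net cell along $\gamma$ each of them must be the unique local macroscopic cluster, forcing them to coincide. The log-expanding rate of $\gamma$ is precisely what makes a Borel--Cantelli along $\gamma$ effective, since the cell count up to parameter time $T$ is polynomial in $T$, much smaller than the inverse failure probability of the local-uniqueness event.

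The hard part will be this uniqueness step. Burton--Keane is unavailable on non-amenable $M$ (e.g.\ $\H^d$), and the long-range nature of Voronoi dependence means that events in disjoint regions of $M$ are not independent. Two ingredients therefore have to cooperate: a combinatorial replacement for planar duality in arbitrary dimension---the connected-cutset result for $\varepsilon$-nets---and an exploration procedure that exposes only a controlled portion of $\eta_\lambda$ near each cell, so that events attached to well-separated $\varepsilon$-net cells become approximately independent. This is precisely the role of the annealed Voronoi exploration advertised in the abstract. Assembling these pieces into a renormalised process along $\gamma$ that stochastically dominates a supercritical finite-range percolation on $\Z$ is where the bulk of the technical work will be concentrated.
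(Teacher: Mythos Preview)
Your overall two-sided squeeze and the use of \cite{DRT17} and \cite{DS25} match the paper, but the supercritical/uniqueness step contains a genuine gap. The claim that ``any two putative infinite white clusters must reach every tubular neighbourhood of $\gamma$'' is false: on a nonamenable manifold such as $\H^d$ an infinite cluster can escape to infinity in a direction that never approaches a fixed curve, so a Borel--Cantelli argument along $\gamma$ cannot force two clusters to meet. The paper's route is different: one defines finite-range local-uniqueness events $F^{sup}_\ep(x)$ on the $\ep$-net graph $G_\ep$, uses \cite{LigSchSta97} to dominate a Bernoulli process of parameter close to $1$, and then invokes the \emph{stronger} threshold $\overline p_u(G_\ep)$ (above which the unique infinite open cluster has only finite complementary components). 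Uniqueness in $\pazocal V_M(\lambda,p)$ then follows because any hypothetical second infinite white cluster would trace an infinite path in the \emph{complement} of the unique renormalised cluster, which is impossible above $\overline p_u$. The log-expanding path enters not as a spine for Borel--Cantelli, but in the Peierls-type estimate showing $p_c(G_\ep)$---and hence, via connected cutsets and $\overline p_u(G_\ep)\le\max\{p_c(G_\ep),1-p_c(G_\ep)\}$---is uniformly bounded away from $1$: the path guarantees that any connected minimal cutset of size $k$ in $G_\ep$ must hit it within a window of size $e^{O(\ep k)}$, allowing an exponential count of cutsets.

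Two smaller points. First, the annealed Voronoi exploration from the abstract is used only for the separate monotonicity result (Theorem~\ref{thm:uniq_mon}), not for the main theorem; the events $F^{sup}_\ep$ are engineered to depend only on $\overline\eta|_{B_M(x,40\ep)}$, so dependence is already finite-range and \cite{LigSchSta97} applies directly---no exploration is needed here. Second, in your subcritical sketch, ``a union bound over a countable cover'' does not suffice: the cover is infinite, so you cannot conclude that all local events are good. What works is again renormalisation on $G_\ep$: an infinite white path in $M$ would yield an infinite path of ``bad'' vertices in $G_\ep$, which is ruled out because the bad vertices form a finite-range process with marginal density below the uniform lower bound $p_c(G_\ep)\ge\delta_0>0$.
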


Hypotheses \ref{thm cond: global injectivity radius} and \ref{thm cond: uniform bound on curvature} are fairly standard in Riemannian geometry. They allow us to control the volume growth of balls and obtain uniform local comparison across different locations. The two other hypotheses cannot be removed; a discussion about the assumptions is presented later in this introduction.

As explained in more detail below, a crucial step in the proof of Theorem~\ref{thm:main} is to show that graphs induced by $\ep$-nets of $M$ have connected minimal cutsets.
The following result, which may be of independent interest, implies that this is the case for a very general class of embedded graphs -- see \cite{BB99,Tim07} for results of a similar flavor.

We quickly recall the definition of cutsets. Given a graph $G=(V,E)$, $x\in V$ and $y\in V \cup\{\infty\}$, we say that $\Gamma\subset V$ is a \emph{cutset} separating $x$ from $y$ if any path from $x$ to $y$ in $G$ intersects $\Gamma$. We say that $\Gamma $ is a \emph{minimal cutset} if for all $w\in\Gamma$ the set $\Gamma\setminus \{w\}$ is no longer a cutset.

\begin{theorem}\label{thm: connected cutsets}
Let $M$ be simply connected. Let $G=(V,E)$ be a graph such that $V\subset M$. Assume that to each $x\in V$ we can associate an open connected set $O(x)\subset M$ containing $x$ such that
\begin{enumerate}[i)]
\item $\bigcup_{x\in V}O(x)=M$,
\item $O(x)\cap O(y)\ne \emptyset $ if and only if $\{x,y\}\in E$ (for $x\neq y$).
\end{enumerate}
    Then for all $x,y\in V$, any minimal cutset separating $x$ from $y$ is connected in $G$.
    
    If we further assume that $G$ is one-ended and locally finite\footnote{We call a graph $G=(V,E)$ \emph{one-ended} if for any finite set $\Gamma\subset V$ the induced subgraph on $V\setminus \Gamma$ has a unique infinite connected component.
    We call \(G\) \emph{locally finite} when each of its vertices has finite degree.}
    (which holds, for instance, if $M$ itself is one-ended, the sets $O(x)$ are all bounded and any bounded set \(B \subset M\) intersects at most finitely many of the sets \(O(x)\)), then any minimal cutset separating $x$ from infinity is connected in $G$.
\end{theorem}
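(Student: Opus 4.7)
The plan is to argue by contradiction using the Mayer--Vietoris sequence and the simple connectedness of $M$. Suppose $\Gamma$ is a minimal $x$-$y$ cutset that is disconnected in $G$, and write $\Gamma = \Gamma_1 \sqcup \Gamma_2$ as a partition into nonempty parts with no $G$-edges between them. Let $A$ and $B$ denote the connected components of $x$ and $y$ in $G[V\setminus\Gamma]$, and let $C_1,\ldots,C_m$ be the remaining components. Set $W_A := \bigcup_{v\in A} O(v)$, $W_B := \bigcup_{v\in B} O(v)$, $W_{C_j} := \bigcup_{v\in C_j} O(v)$, and $U_i := \bigcup_{w\in \Gamma_i} O(w)$. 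By condition (ii), the sets $W_A, W_B, W_{C_1},\ldots,W_{C_m}$ are pairwise disjoint and $U_1 \cap U_2 = \emptyset$, while by (i) their union is all of $M$. Each $O(v)$ is path-connected (being an open connected subset of the locally path-connected manifold $M$), so connectedness of the subgraphs $A$, $B$, and $C_j$ implies that $W_A$, $W_B$, and each $W_{C_j}$ is path-connected in $M$.

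The core step is to apply the Mayer--Vietoris sequence to the open cover $M = P_1 \cup P_2$, where $P_i := W_A \cup W_B \cup \bigcup_j W_{C_j} \cup U_i$. A direct computation gives $P_1 \cap P_2 = W_A \sqcup W_B \sqcup W_{C_1} \sqcup \cdots \sqcup W_{C_m}$, which has exactly $m + 2$ path-components. The key graph-theoretic input is the minimality of $\Gamma$: for each $w \in \Gamma$, a witness path from $x$ to $y$ avoiding $\Gamma \setminus \{w\}$ must pass through $w$ and hence produces neighbors of $w$ in both $A$ and $B$. So each $O(w)$ meets both $W_A$ and $W_B$, placing them in the same path-component of each $P_i$. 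A short consequence of (i)--(ii) and the connectedness of $M$ is that $G$ itself must be connected (any $G$-disconnection of $V$ would yield, via (ii), a clopen partition of $M$), so every $C_j$ has a neighbor in $\Gamma$. Letting $\beta_i$ denote the number of $C_j$ with no neighbor in $\Gamma_i$, this forces $\beta_1 + \beta_2 \leq m$, while $P_i$ has exactly $1 + \beta_i$ path-components. Since $M$ is simply connected we have $H_1(M) = 0$, so Mayer--Vietoris collapses to a short exact sequence $0 \to H_0(P_1\cap P_2) \to H_0(P_1) \oplus H_0(P_2) \to H_0(M) \to 0$, and comparing ranks yields $\beta_1 + \beta_2 = m + 1$, contradicting $\beta_1 + \beta_2 \leq m$. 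I expect the main obstacle to be precisely this combinatorial bookkeeping of the stray components $C_j$: a minimal cutset need not split $V\setminus\Gamma$ into only two pieces, and the clean inequality $\beta_1 + \beta_2 \leq m$ relies on noting that a $C_j$ with no edges to $\Gamma$ would be a connected component of $G$ on its own.

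For the second statement, assume $G$ is one-ended and $\Gamma$ is a finite minimal cutset separating $x$ from infinity. Pick any $y$ in the unique infinite component $B_\infty$ of $G[V\setminus\Gamma]$; then $\Gamma$ separates $x$ from $y$. For each $w \in \Gamma$, the minimality of $\Gamma$ for infinity provides a path from $x$ to infinity avoiding $\Gamma \setminus \{w\}$, which must leave $x$'s finite component through $w$ into a neighbor $u \in V \setminus \Gamma$ and then continue to infinity inside $V\setminus\Gamma$; this forces $u \in B_\infty$. Since $B_\infty$ is connected in $G[V\setminus\Gamma]$ and contains $y$, there is a sub-path in $B_\infty$ from $u$ to $y$, giving an $x$-$y$ path avoiding $\Gamma \setminus \{w\}$. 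Hence $\Gamma$ is also a minimal $x$-$y$ cutset, and the first part of the theorem concludes.
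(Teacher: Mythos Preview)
Your Mayer--Vietoris approach is genuinely different from the paper's and essentially correct, but there is one real gap: you tacitly assume that $V\setminus\Gamma$ has only finitely many components $C_1,\dots,C_m$ besides $A$ and $B$. The theorem carries no local-finiteness hypothesis on $G$ and no finiteness hypothesis on $\Gamma$, so $m$ may well be infinite, and then your rank identity $\beta_1+\beta_2=m+1$ versus the inequality $\beta_1+\beta_2\le m$ dissolves under cardinal arithmetic. The fix is both easy and cleaner than the bookkeeping you flagged as the main obstacle: you have already shown that $W_A$ and $W_B$ are distinct path-components of $P_1\cap P_2$ but lie in the same path-component of each $P_i$ (since every $O(w)$ with $w\in\Gamma$ meets both $W_A$ and $W_B$). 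Hence the nonzero class $[W_B]-[W_A]\in H_0(P_1\cap P_2)$ lies in the kernel of the Mayer--Vietoris map to $H_0(P_1)\oplus H_0(P_2)$, contradicting the injectivity forced by $H_1(M)=0$. The stray components $C_j$ and the counts $\beta_i$ can be dropped entirely.

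For comparison, the paper stays within point-set topology: it fixes two components $\Gamma_1,\Gamma_2$ of $\Gamma$, takes the component $C$ of $\widetilde\Gamma_2$ in $M\setminus\widetilde\Gamma_1$ (where $\widetilde\Gamma_i=\bigcup_{w\in\Gamma_i}O(w)$), and invokes the fact that in a simply connected space an open connected set with connected complement has connected boundary. The connected set $\partial C\subset\bigcup_{v\notin\Gamma}O(v)$ is then used, via the covering lemma that turns connected subsets of $M$ into connected subsets of $G$, to reroute two witness paths around $\Gamma$ and contradict the cutset property. Your homological route makes the role of simple connectedness transparent via $H_1(M)=0$ and, once patched as above, is arguably slicker; the paper's argument is more elementary (no homology) but juggles several separate topological lemmas. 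Your reduction of the one-ended case to an $x$--$y$ minimal cutset is the same as the paper's.
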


By defining appropriate open neighborhoods of the (closed) cells $C(x;\eta_\lambda)$, we obtain that the conclusion of Theorem~\ref{thm: connected cutsets} holds for the Voronoi graph $\pazocal{G}_{M}(\eta_\lambda)$ almost surely, see Corollary~\ref{cor:connected cutsets closed}. 
 It turns out that for graphs with connected cutsets, one can upper bound $p_u$ (in fact, an even larger critical parameter $\overline{p}_u$) in terms of $p_c$, see Theorem~\ref{thm:bound on p_u}.
In particular, this applies to $\pazocal{G}_{M}(\eta_\lambda)$ itself, and we obtain the following.

\begin{theorem}\label{thm:bound on p_u for Voronoi}
Assume that $M$ is simply connected and one-ended, has a positive global injectivity radius, and all sectional curvatures of $M$ are bounded from above. Then, for every $\lambda>0$, either $p_c(M,\lambda)=p_u(M,\lambda)> 1/2$ or
\begin{equation}\label{eq:p_u leq 1-p_c}
	p_u(M,\lambda)\leq 1-p_c(M,\lambda).
\end{equation}
\end{theorem}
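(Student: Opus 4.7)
The plan is to derive Theorem~\ref{thm:bound on p_u for Voronoi} by combining the two structural results already announced in the excerpt: the corollary (Corollary~\ref{cor:connected cutsets closed}) which asserts that the Voronoi graph $\pazocal{G}_{M}(\eta_\lambda)$ almost surely has connected minimal cutsets, and the generic bound on $p_u$ for graphs with the connected cutset property (Theorem~\ref{thm:bound on p_u}). The substantive task is to check that, under the hypotheses assumed here, $\pazocal{G}_{M}(\eta_\lambda)$ satisfies the combinatorial input of Theorem~\ref{thm:bound on p_u} almost surely; the stated dichotomy then follows by reading off that theorem.

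For the verification I would first invoke Appendix~\ref{app:bounded cells} to ensure that, under positive global injectivity radius and an upper bound on the sectional curvatures, the cells $C(x;\eta_\lambda)$ are almost surely compact. To apply Theorem~\ref{thm: connected cutsets} to $\pazocal{G}_{M}(\eta_\lambda)$, I would then associate to each $x\in \eta_\lambda$ an open connected neighborhood $O(x)\supset C(x;\eta_\lambda)$ --- for instance a $\varepsilon_x$-thickening with $\varepsilon_x$ chosen sufficiently small depending on the local tessellation --- so that $\bigcup_x O(x)=M$ and $O(x)\cap O(y)\neq \emptyset$ exactly when $C(x;\eta_\lambda)\cap C(y;\eta_\lambda)\neq \emptyset$, i.e., exactly when $\{x,y\}\in E(\eta_\lambda)$. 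Since $M$ is simply connected, Theorem~\ref{thm: connected cutsets} then yields that minimal vertex-cutsets between any two points of $\eta_\lambda$ are connected in $\pazocal{G}_{M}(\eta_\lambda)$. Boundedness of cells, combined with one-endedness of $M$ and local finiteness of $\eta_\lambda$, additionally makes $\pazocal{G}_{M}(\eta_\lambda)$ one-ended almost surely, so minimal cutsets separating a vertex from infinity are also connected.

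With this input verified, Theorem~\ref{thm:bound on p_u} applies to $\pazocal{G}_{M}(\eta_\lambda)$ and gives the dichotomy directly. The underlying mechanism is the classical Benjamini--Schramm-type duality: if at parameter $p$ there exist at least two distinct infinite white clusters, then a minimal separator of black cells between them is infinite and, by connectedness of minimal cutsets, forms an infinite black cluster; this forces $1-p\geq p_c(M,\lambda)$, and hence $p_u(M,\lambda)\leq \max\{p_c(M,\lambda),\, 1-p_c(M,\lambda)\}$. In the regime $p_c(M,\lambda)>1/2$, the conditions $p\geq p_c$ and $p\leq 1-p_c$ cannot both hold, so non-uniqueness is impossible and $p_u=p_c>1/2$; otherwise $p_u\leq 1-p_c$, which is precisely the stated dichotomy. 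The main obstacle I anticipate is the geometric construction of the $O(x)$: they must be open, connected, contain $C(x;\eta_\lambda)$, and satisfy the intersection characterization exactly. The delicate case is that of several Voronoi cells meeting along a lower-dimensional face, where the radii $\varepsilon_x$ must be chosen carefully to avoid creating spurious intersections beyond the genuine neighbors of $x$ in $\pazocal{G}_{M}(\eta_\lambda)$.
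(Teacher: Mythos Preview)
Your proposal is correct and follows essentially the same approach as the paper: verify that the Voronoi graph $\pazocal{G}_M(\eta_\lambda)$ has connected minimal cutsets (via Corollary~\ref{cor:connected cutsets closed}, whose proof is exactly the $\varepsilon_x$-thickening construction you describe) and is one-ended (via compactness of cells from Appendix~\ref{app:bounded cells} plus Lemma~\ref{lem:one_end}), then apply Theorem~\ref{thm:bound on p_u} and read off the dichotomy from $p_u\leq\max\{p_c,1-p_c\}$. The only minor remark is that your informal description of the ``underlying mechanism'' via two infinite white clusters is slightly different from the paper's actual argument for Theorem~\ref{thm:bound on p_u}, which instead works with the event defining $\overline{p}_u$ and shows that the interior boundary of an infinite complementary component is an infinite connected closed set; but since you are citing Theorem~\ref{thm:bound on p_u} as a black box this does not affect the validity of your derivation.
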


\paragraph{About the proofs.}
We now discuss the proof of Theorem~\ref{thm:main}. The first step is to establish the aformentioned local resemblance between the Poisson--Voronoi tesselation on $M$ and $\R^d$ for high intensities. More precisely, we show in Proposition~\ref{prop:comparison} that for any $x\in M$, we can couple the Poisson--Voronoi tessellations of intensity $\lambda$ on a ball of radius $\ep=\Theta(\lambda^{-1/d})$ around $x$ in $M$ and on a corresponding ball in $\R^d$ in such a way that the associated graphs agree with high probability as $\lambda\to\infty$. While it is easy to couple the two point processes in such a way that they coincide with high probability, some extra care is required to guarantee that the adjacency structure of the cells is likely to remain stable under the change of metric. 

The second step is to extend this local comparison to the global scale and establish properties such as the existence (and uniqueness) of an unbounded component. For this, we need a \emph{finite-size criterion} for the \emph{entire} subcritical and supercritical phases of Voronoi percolation in $\mathbb{R}^d$, which will allow us to implement a \emph{``fine-graining''} argument (unlike in the classical coarse-graining arguments in percolation theory, here we deal with small balls, which explains the term ``fine''). In the subcritical regime, this finite-size criterion is that large annuli are crossed with low probability. In the supercritical regime, it is that large annuli are crossed by a \emph{unique} component with high probability.  
The fact that these finite-size criteria hold up to the critical point is often referred to as a \emph{sharpness result}. As mentioned above, these results were recently established for Voronoi percolation in $\R^d$ in \cite{DRT17} (for the subcritical phase) and \cite{DS25}  (for the supercritical phase). 

We combine the results described in the previous two paragraphs to deduce that for every $p<p_c(\R^d)$ (resp.,~$p>p_c(\R^d)$), a certain event $F_\ep^{sub}$ (resp.,~$F_\ep^{sup}$), defined on $B_{M}(x,\ep)$ and corresponding to a subcritical (resp.,~supercritical) finite-size criterion, holds with high probability for $\pazocal{V}_{M}(\lambda,p)$, provided that $\lambda$ is large enough. This implies the absence (resp.,~existence and uniqueness) of an unbounded component for $\pazocal{V}_{M}(\lambda,p)$, which we prove using a fine-graining argument: we construct a graph $G_\ep$ embedded in $M$ with mesh-size $\ep$ and define a renormalized percolation model on $G_\ep$ where a site $x$ is open depending on whether the event $F_\ep^{sub}$ (resp. $F_\ep^{sup}$) occurs around $x$. We then compare this model (in the sense of stochastic domination) with a Bernoulli percolation having parameter close to $0$ (resp.,~$1$) on $G_\ep$. 

Our problem is then reduced to constructing an appropriate family of embedded graphs $(G_\ep)_{\ep>0}$, of mesh-size $\ep$, in such a way that their Bernoulli percolation critical points $p_c(G_\ep)$ and $p_u(G_\ep)$ are uniformly bounded away from $0$ and $1$. (In fact, because uniqueness is a non-monotonic event and we rely on stochastic domination, we need to upper bound a stronger uniqueness critical parameter $\overline{p}_u(G_\ep)$, above which the (unique) infinite cluster has only finite holes, i.e.,~the complement of the infinite cluster consists of only finite components.) For Euclidean space $\R^d$, this is easy as one can take $G_\ep=\ep\Z^d$, which are all isomorphic to each other (in particular, they have uniformly bounded critical points). In general, manifolds may not admit such nice tessellations with mesh-size tending to zero -- this is the case even for $\H^d$ ({see, e.g.,~\cite[Section 2]{martin2013Kleinian}).
Due to Theorem~\ref{thm: connected cutsets}, we can simply take $G_\ep$ to be the graph induced by arbitrary $\ep$-nets of $M$. Indeed, we can combine connectivity of cutsets with other basic geometric properties of these graphs, such as uniformly bounded degrees and existence of a log-expanding path, to prove via a Peierls argument that $p_c(G_\ep)$ and $\overline{p}_u(G_\ep)$ are indeed uniformly bounded away from $0$ and $1$ for $\ep\in(0,1]$ -- see Theorem~\ref{thm:discretization_percolation}. In fact, it is enough to bound $p_c(G_\ep)$ since the inequality $\overline{p}_u(G)\leq \max\{p_c(G), 1-p_c(G)\}$ holds for any one-ended graph $G$ with connected minimal cutsets. This is proved in Theorem~\ref{thm:bound on p_u}, from which Theorem~\ref{thm:bound on p_u for Voronoi} follows.
All these results are in Section~\ref{sec:discretization}.

Our work contributes to the broader development of renormalization theory for percolation on general geometries. In particular, while the proof of Theorem~\ref{thm:main} employs a fine-graining approach to analyze the high-intensity regime, in Appendix~\ref{app:phase-transition} we apply a complementary coarse-graining technique to establish the non-triviality of the phase transition for \emph{all} $\lambda > 0$, under the assumptions of Theorem~\ref{thm:main} (and superlinear growth). Furthermore, we believe these techniques could offer useful tools for studying other models on general manifolds, such as Boolean percolation.

\paragraph{Comparison with $\H^2$.}

We highlight here a few key differences between our proof of Theorem~\ref{thm:main} and the proof of Hansen \& M\"uller \cite{HM_large_int} in the special case $M=\H^2$. 
First, due to the relation \eqref{eq:H^2_pu=1-pc}, the convergence \(p_c(\H^2,\lambda) \to 1/2\) immediately implies \(p_u(\H^2,\lambda) \to 1/2\). 
For general manifolds, we require some extra work in the fine-graining argument to deal with \(p_u(M,\lambda)\), that is, we need to (uniformly) upper bound the stronger uniqueness parameter $\overline{p}_u$ described above.
More importantly, in dimension $d=2$ we can reformulate the finite-size criterion for the supercritical regime as the existence of a circuit around an annulus, which is simpler than local uniqueness. In particular, this criterion is monotone, which is not true for local uniqueness. 
Because of non-monotonicity, in order to import local uniqueness from Euclidean to hyperbolic space, we work at scales where the Euclidean and hyperbolic Voronoi graphs are \emph{exactly} the same with high probability. This appropriate scale shrinks as $\lambda\to \infty$, forcing us to consider finer and finer graphs. If we were only dealing with monotonic events, we could directly compare the two processes using domination at a fixed scale, like in \cite{HM_large_int}. As a consequence, the renormalization argument in \cite{HM_large_int} happens on a fixed tiling $G$ of scale $1$, regardless of $\lambda$, which is taken to be the regular $(7,7,7)$-triangulation of $\H^2$. The renormalization argument is successful since $p_c(G)\in(0,1)$, which is easy to prove since both $G$ and its dual $G^*$ have bounded (actually, constant) degree.

\paragraph{Comments and questions.}
We now make a few comments concerning the assumptions for our results, and ask some questions that arise naturally.

Concerning Theorem~\ref{thm:main}, essentially none of the assumptions can be removed. For instance, the three-dimensional cylinder $\R\times \mathbb{S}^2$ satisfies all the assumptions except one-endedness, but clearly $p_c(\R\times \mathbb{S}^2,\lambda)=1$ for all $\lambda>0$.
Simply connectedness cannot be removed either:
consider the plane graph $G$ obtained by taking the $x$-axis and adding arcs between $(k,0)$ and $(-k,0)$ for all $k\in\N$, as in Figure \ref{fig:one-end}.
The (open) parallel set $\{z \colon \d(z,G)<1/3\}$ is one-ended and contains a log-expanding path, but it is not simply connected.
Clearly, it satisfies $p_c(M,\lambda)=1$ for all $\lambda>0$.\footnote{We cheated a bit here by presenting a manifold that is not complete -- the reader is invited to use their creativity to turn $G$ into a complete manifold if they are not convinced.}
We also believe that a manifold with diverging negative curvature can be constructed in such a way that $p_c(M,\lambda)=0$ for all $\lambda>0$.
 A reasonable candidate is $\R^2$ equipped with a Riemannian metric so that the Gaussian curvature at $x$ equals $-(1+|x|^2)$, which exists by \cite[Theorem 4.1]{KW74}.

Even more interestingly, the condition on the existence of a log-expanding path is not only necessary, but the log function is sharp, as illustrated by the following example. Given a non-decreasing function $f:[0,\infty) \to [0,\infty)$ such that $f(0)=0$, consider the surface of revolution given by $M\coloneqq\{(x,y,z)\in \R^3:\, x^2+y^2=f(z)^2, z \geq 0\}$ -- see Figure \ref{fig:revolution}. 
By taking $f$ sufficiently nice and regular,
we can ensure that conditions \ref{thm cond: global injectivity radius}--\ref{thm cond : connected+one-ended} of Theorem~\ref{thm:main} hold. 
Therefore, our theorem applies to such a manifold if we additionally assume that $f(z)\geq c\log z-C$. However, if $f(z)=o(\log z)$, then it is not hard to prove that $p_c(M,\lambda)=1$ for all $\lambda>0$.\footnote{Indeed, for every $\lambda > 0$ and $p<1$, there exists a constant $c=c(\lambda,p)>0$ and a sequence of events $E_1,E_2,\ldots$, so that $E_k$ depends only on the points in $M \cap (\R^2 \times (3k,3k+3))$, the region $M \cap (\R^2 \times [3k+1,3k+2])$ is completely black under $E_k$ and $\P_{\lambda,p}(E_k) \geq e^{-c(\lambda,p) f(3k)}$.
Since $e^{-c f(3k)}$ is not summable in $k\in\N$, the (reverse) Borel-Cantelli Lemma implies that almost surely infinitely many sections $M\cap (\R^2\times[3k+1,3k+2])$ are fully black, so that there is no unbounded white connected component, and thus $p_c(M,\lambda)=1$.}

\begin{figure}
\centering
\begin{minipage}{.4\textwidth}
  \centering
  \vspace{0.4cm}
  \begin{tikzpicture}
    \draw [thick] (-2.8,0) -- (2.8,0);
    \draw [thick,domain=0:180] plot ({0.5*cos(\x)}, {0.5*sin(\x)});
    \draw [thick,domain=0:180] plot ({cos(\x)}, {sin(\x)});
    \draw [thick,domain=0:180] plot ({1.5*cos(\x)}, {1.5*sin(\x)});
    \draw [thick,domain=0:180] plot ({2*cos(\x)}, {2*sin(\x)});
    \draw [thick,domain=0:180] plot ({2.5*cos(\x)}, {2.5*sin(\x)});
    \node at (3.2,0) {$\ldots$};
    \node at (-3.2,0) {$\ldots$};
    \end{tikzpicture}
  \captionof{figure}{The plane graph $G$. By properly thickening $G$, one obtains a smooth manifold satisfying all assumptions of Theorem~\ref{thm:main} except simply connectedness. This manifold satisfies $p_c(M,\lambda)=1$ for all $\lambda>0$.}
  \label{fig:one-end}
\end{minipage}
\hspace{2cm}
\begin{minipage}{.4\textwidth}
  \centering
  \begin{tikzpicture}[scale = .68]
    \draw[gray, dotted, thick] (2.175cm, 1.5) arc (0:180:2.175cm and 0.3cm);

    \draw[gray, thick] (-2.175cm, 1.5) arc (180:360:2.175cm and 0.3cm);
    \draw[samples=50,line width=1pt] plot[domain=-2.35:2.35] (\x, {cosh(\x) - 3});
    \draw[Stealth-Stealth] (0, 1.5) -- (2.175, 1.5);
    \node[scale=1.2] at (1.1, 1.9) {$f(z)$};

    \draw[Stealth-Stealth] (0, -2) -- (0, 1.5);
    \node[scale=1.2] at (0.25, -.1) {$z$};
\end{tikzpicture}
  \captionof{figure}{The surface of revolution $M$. If $f$ is sufficiently nice and regular but $f(z)=o(\log z)$, then $M$ satisfies all assumptions of Theorem~\ref{thm:main} except log-expansion. This manifold satisfies $p_c(M,\lambda)=1$ for all $\lambda>0$.}
  \label{fig:revolution}
\end{minipage}
\end{figure}

All the examples mentioned above involve manifolds for which $p_c(M,\lambda)$ is trivial (i.e., equal to $0$ or $1$) for all $\lambda$. The two following questions arise naturally.

\begin{question}\label{q:pc<1 implies conv}
    Does $p_c(M,\lambda)\in(0,1)$ for some $\lambda>0$ imply $p_c(M,\lambda)\to p_c(\R^d)$ as $\lambda\to\infty$?
    Does the same result hold for $p_u(M,\lambda)$?
\end{question}

\begin{question}\label{q:pc<1}
    Under what conditions can we guarantee that $p_c(M,\lambda)\in (0,1)$? For instance, does isoperimetric dimension $>1$ imply $p_c(M,\lambda)<1$? If $M$ is homogeneous, is it enough to assume that its volume growth is superlinear?
\end{question}

The presumably sufficient conditions we propose in Question~\ref{q:pc<1} above are inspired by \cite[Conjecture 2 and Question 2]{BS96}, in the context of graphs. This conjecture has been established in \cite{DGRSY20}, along with a partial answer to the question -- see also \cite{EST24} for a new proof of a stronger result. In Remark~\ref{rem:exp_cutsets_transience}, we discuss a plausible approach to Question~\ref{q:pc<1} based on \cite{EST24}. We also believe that the property of having a non-trivial phase transition, i.e., $0<p_c(M,\lambda)\leq p_u(M,\lambda)<1$, depends only on $M$ and not on $\lambda$. In Appendix~\ref{app:phase-transition}, we prove that the phase transition is non-trivial for all $\lambda\in(0,\infty)$ under the assumptions of Theorem~\ref{thm:main} and superlinear growth.

For many homogeneous manifolds of dimensions $d\geq3$ (for instance, those of the form $\H^{d_1}\times\R^{d_2}\times K$, with $K$ homogeneous and compact), one can find an isometric copy of $\R^2$ or $\H^2$ in them. One can then consider the induced random tessellations on this $2$-dimensional manifold, which despite not being a Voronoi tessellation itself, should still be quite well behaved. If there is a copy of \(\H^2\) in \(M\), then one could try to apply \cite[Theorem 6.3]{BS01} to the induced tessellation on $\H^2$ to get that $p_c(M,\lambda) < 1/2$ for all $\lambda>0$.
If there is a copy of \(\R^2\) in \(M\), then one may hope that adapting the proof of \cite{BR_Voronoi_percolation} (or alternatively \cite{DRT17,DS25}) to the induced tessellation on $\R^2$ will yield $p_c(M,\lambda)\leq 1/2$ for all $\lambda>0$. If the inclusion $\R^2\subset M$ is proper, one could even hope to prove, by an essential enhancement argument (as in~\cite{aizenman1991strict} or \cite{MS19}), that $p_c(M,\lambda)< 1/2$.
Motivated by these remarks, we ask the following.
\begin{question}\label{q:pc_gap_1/2}
    If $M$ is homogeneous, does $p_c(M,\lambda)<1$ imply $p_c(M,\lambda)\leq 1/2$ for all $\lambda>0$? 
    If one further assumes that $M$ is not $\R^2$, does it hold that $p_c(M,\lambda)< 1/2$?
\end{question}
We remark that such a ``gap at $1$'' has been established for the percolation threshold of Cayley graphs in \cite{PS21b}, but the precise size of the gap (or equivalently the supremum of $p_c(G)<1$) is unknown in this context. A positive answer to Question~\ref{q:pc_gap_1/2} would not only establish such a gap for homogeneous manifolds, but also show that it is equal to $1/2$.

As mentioned above, it is reasonable to expect that $p_c(\R^d)<1/2$ for $d\geq3$ -- by the asymptotics obtained in \cite{BBQ05}, this is known at least for large enough dimensions. 
Theorem~\ref{thm:main} would then imply, in particular, that the inequality \eqref{eq:p_u leq 1-p_c} is strict for large enough $\lambda$ if $M$ has dimension $d\geq3$. For symmetric spaces of connected semisimple Lie groups of higher rank with property (T), it is shown in \cite[Theorem 1.1]{GR25} that $p_u(M,\lambda)\to0$ as $\lambda\to0$, so \eqref{eq:p_u leq 1-p_c} is also strict in this regime.
On the other hand, \eqref{eq:p_u leq 1-p_c} is an equality for both two-dimensional manifolds $\R^2$ and $\H^2$ by \eqref{eq:p_c=1/2} and \eqref{eq:H^2_pu=1-pc}.
The following question seems natural to us.
 
\begin{question}\label{q:dim3_pu<1-pc}
For simply connected, one-ended, homogeneous Riemannian manifolds of dimension $d\geq3$, is \eqref{eq:p_u leq 1-p_c} always strict?
\end{question}

Next, we discuss the problem of existence of a non-uniqueness phase. For a transitive graph $G$, it is well known that $p_c(G)=p_u(G)$ if $G$ is amenable, see, e.g., \cite[Theorem 7.6]{LP16}. The famous \cite[Conjecture 6]{BS96} states that the converse is true, i.e., if $G$ is a transitive non-amenable graph, then $p_c(G)<p_u(G)$ -- see ~\cite{PS00,Hut19,Hut20} for some results in this direction, and \cite{Tyk07,Dic24} for related works in the continuum setting. We remark that this conjecture is not true in the context of unimodular random graphs, see \cite{MR3880008}. Below, $h(M)$ denotes the Cheeger constant of $M$ (see, e.g., \cite[p.~361]{Chavel} for a definition).

\begin{question}\label{q:pc<pu}
    If $M$ is homogeneous, is $p_c(M,\lambda)=p_u(M,\lambda)$ equivalent to $h(M)=0$?
\end{question}

To conclude, we remark that the behavior of critical percolation on homogeneous manifolds with a positive Cheeger constant may be more approachable than on other spaces. For instance, if $M$ is assumed to be a symmetric space with $h(M)>0$, the results of \cite{Paq17} imply that $\pazocal{G}_{M}(\eta_\lambda)$ is a non-amenable unimodular network. Therefore, it follows from \cite[Theorem 8.11]{aldouslyons} that the phase transition is continuous in the sense that $\pazocal{N}_{M}(\lambda,p_c(M,\lambda))=0$ almost surely for every $\lambda>0$. As mentioned earlier in this introduction, the corresponding result for $\R^d$, $d\geq3$, is a major challenge in percolation theory. However, similarly to transitive graphs, it is natural to expect that continuity of phase transition holds for all homogeneous manifolds. Beyond the continuity, it is also natural to expect that the critical model has a conformally invariant scaling limit at criticality, see for instance \cite{BS98,Ben15} for more on this matter.

\paragraph{Organization of the paper.} In Section~\ref{sec:discretization} we study geometric properties of graphs embedded in $M$ and prove Theorems~\ref{thm: connected cutsets} and \ref{thm:bound on p_u for Voronoi}. In Section~\ref{sec:localcomparison} we show that for $\lambda$ large, the Voronoi graph on a small neighborhood of $M$ looks like its Euclidean counterpart -- this is the content of Proposition~\ref{prop:comparison}. In Section~\ref{sec:fine_graining} we prove Theorem~\ref{thm:main}, which relies on Theorem~\ref{thm:discretization_percolation} and Proposition~\ref{prop:comparison}.
In Appendix~\ref{app:normal coordinates}, we provide basic background on Riemannian normal coordinates, which are used in Section~\ref{sec:localcomparison}. In Appendix~\ref{app:bounded cells}, we show that Poisson--Voronoi cells are bounded almost surely. In Appendix~\ref{app:phase-transition}, we show that the phase transition is non-trivial for all $\lambda>0$ under the assumptions of Theorem~\ref{thm:main} and superlinear growth.

\paragraph{Acknowledgments.} We would like to thank Gábor Pete for bringing to our attention the literature on unimodular random graphs and Alexander Lytchak for helpful comments on normal coordinates. We are also grateful to Pierre Calka, Daniel Hug, Sébastien Martineau and Vincent Tassion for their comments on earlier drafts of this paper. 
TB has been supported by the German Research Foundation (DFG) through the Priority Programme “Random Geometric Systems” (SPP 2265), via the research grant HU 1874/5-1.
The three last authors were supported by the ERC grant CriSP (No. 851565). FS was also supported by the ERC grant Vortex (No. 101043450).

\section{Connected cutsets and discretization}
\label{sec:discretization}

We start by proving a general result about the connectedness of minimal cutsets for embedded graphs (see Theorem \ref{thm: connected cutsets}) and deduce a bound on $p_u$ as a consequence (see Theorem \ref{thm:bound on p_u}). We then consider graphs induced by $\ep$-nets of $M$ and prove that critical parameters for Bernoulli site percolation on these graphs are uniformly bounded away from $0$ and $1$.

\subsection{Connectedness of minimal cutsets}

In this section we prove Theorem \ref{thm: connected cutsets}. The proof is purely based on topological facts. Namely, we rely on the fact that if an open set $U\subset M$ is such that both $U$ and its complement $M\setminus U$ are connected, then its boundary $\partial U$ is also connected. The difficulty lies in showing that such a property in the continuum persists in the discrete. 

Fix an embedded graph $G=(V,E)$ and let $(O(x))_{x\in V}$ be the family that satisfies the requirements in the statement of Theorem \ref{thm: connected cutsets}.
Recall that a continuous path in $M$ is a continuous function $\gamma:I \rightarrow M$, for some interval $I\subset \R$. We may sometimes abuse the notation and just write $\gamma$ to denote its image $\im(\gamma)$.
We understand a \emph{(discrete) path} in a graph \(G\) to be a possibly infinite sequence of vertices, so that consecutive vertices are neighbors in \(G\).
We call such a path \emph{simple} when no vertex appears more than once in it.
We define $B_{M}(x,r)$ as the open ball centered at $x$ of radius $r$ in $M$.

We collect three topological facts for later reference:
\begin{equation}\label{fact:connected_boundary}\tag{T1}
    \textit{\parbox{.8\textwidth}{Assume that \(M\) is simply connected. If \(U \subset M\) is open and connected, then \(\partial U\) is connected if and only if \(M \setminus U\) is connected.}}
\end{equation}
This is shown in \cite{CKL11} for subsets of \(\R^d\),
but the proof applies mutatis mutandis to any simply connected, normal and locally path-connected topological space.
\begin{equation}\label{fact:connected_components}\tag{T2}
    \textit{\parbox{.8\textwidth}{If \(A \subset M\) is connected and \(C\) is a connected component of \(M \setminus A\), then \(M \setminus C\) is connected.}}
\end{equation}
This follows from \cite[Theorem V.III.5]{Kur68} and the fact that \(M\) is a connected space.
\begin{equation}\label{fact:boundary_contained}\tag{T3}
    \textit{\parbox{.8\textwidth}{If \(A \subset M\) and \(C\) is a connected component of \(M \setminus A\), then \(\partial C \subset \partial A\).}}
\end{equation}
\begin{proof}[Proof of \eqref{fact:boundary_contained}]
Assume otherwise and take \(x \in \partial C \setminus \partial A\).
Obviously \(x\) does not lie in the interior of \(A\), so \(x \in M \setminus \overline{A}\).
Thus, \(B_M(x,\ep) \subset M \setminus \overline{A}\) for some \(\ep > 0\).
Since this ball is connected and intersects \(C\), it must be contained in \(C\), hence \(x\) is an interior point of \(C\), a contradiction.
\end{proof}

We now prove that any connected subset of $M$ naturally induces a connected set in $G$. In particular, this implies that $G$ itself is connected.

\begin{lemma}
\label{lem:connected_sets}
Let \(\Gamma \subset V\) and let \(A \subset \cup_{x\in\Gamma}O(x)\) be a connected set.
Then
    \[ \{x \in \Gamma \colon O(x)\cap A \neq \emptyset\} \]
is a connected subset of \(G\).
In particular, if $\gamma$ is a continuous path contained in $\cup_{x\in \Gamma}O(x)$, then we can associate to $\gamma$ a discrete path (not necessarily simple) $\pi $ in $G$ contained in $\Gamma$ such that
    \[\gamma \subset \bigcup _{x\in\pi}O(x)\qquad \text{and} \qquad\forall x\in\pi \quad \gamma\cap O(x)\ne \emptyset.\] 
\end{lemma}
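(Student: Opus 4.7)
The plan is to establish the first assertion by contradiction and then deduce the second as a compactness refinement. Setting $S_A := \{x \in \Gamma : O(x) \cap A \neq \emptyset\}$, suppose $S_A$ is disconnected in $G$, so it partitions as $S_A = S_1 \sqcup S_2$ with both sides nonempty and no $G$-edge between them. Define the open sets $U_i := \bigcup_{x \in S_i} O(x)$. The key point is that $U_1$ and $U_2$ are disjoint: for any $x \in S_1$ and $y \in S_2$ one has $\{x,y\} \notin E$, so condition (ii) (used in the ``only if'' direction) forces $O(x) \cap O(y) = \emptyset$.

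Next I would verify that $U_1$ and $U_2$ separate $A$. Every $a \in A$ lies in some $O(x)$ with $x \in \Gamma$ by the hypothesis $A \subset \bigcup_{x\in\Gamma} O(x)$, and any such $x$ automatically belongs to $S_A = S_1 \cup S_2$, so $a \in U_1 \cup U_2$. Moreover, any $x \in S_i$ witnesses a point of $A$ lying in $O(x) \subset U_i$, hence $A \cap U_i \neq \emptyset$. This exhibits $A$ as the disjoint union of two nonempty relatively open subsets, contradicting connectedness of $A$.

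For the ``in particular'' assertion, apply the first part to the (connected) image of $\gamma$; then $S_\gamma := \{x \in \Gamma : O(x) \cap \gamma \neq \emptyset\}$ is a connected subset of $G$. To promote this into an actual walk traversing $\gamma$, I would use compactness: assuming $\gamma$ is defined on a compact interval $I$ (the non-compact case being handled by exhaustion on compact subintervals), for each $t \in I$ pick $x_t \in \Gamma$ with $\gamma(t) \in O(x_t)$ and use continuity of $\gamma$ together with openness of $O(x_t)$ to find an open subinterval $J_t \ni t$ of $I$ with $\gamma(J_t) \subset O(x_t)$. A finite subcover, reordered by a standard argument as a chain of overlapping intervals $J_{t_1}, \ldots, J_{t_n}$ traversing $I$, produces the sequence $\pi := (x_{t_1}, \ldots, x_{t_n})$. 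Consecutive terms satisfy either $x_{t_i} = x_{t_{i+1}}$ or $\{x_{t_i}, x_{t_{i+1}}\} \in E$, since $\gamma$ sends the nonempty overlap $J_{t_i} \cap J_{t_{i+1}}$ into $O(x_{t_i}) \cap O(x_{t_{i+1}})$ and condition (ii) applies. The two required properties are then immediate: $\gamma \subset \bigcup_i O(x_{t_i})$ because the $J_{t_i}$ cover $I$, and each $\gamma(t_i) \in O(x_{t_i}) \cap \gamma$.

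The only slightly delicate point is the direction of condition (ii) actually used: not the ``if'' but the ``only if'', namely that non-adjacent vertices have disjoint associated open sets. Without this implication, the disjointness $U_1 \cap U_2 = \emptyset$ in the first step would fail, so the argument genuinely relies on the full equivalence in hypothesis (ii). Notably, simple-connectedness of $M$ plays no role in this lemma; it will enter only later, when connectedness of $A$ is obtained from fact \eqref{fact:connected_boundary}.
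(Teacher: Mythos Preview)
Your proof of the first assertion matches the paper's: both separate $A$ by the disjoint open sets $\bigcup_{x \in S_1} O(x)$ and $\bigcup_{x \in S_2} O(x)$, invoking precisely the ``only if'' direction of (ii) to get disjointness. For the ``in particular'', the paper argues more tersely: since $S_\gamma$ is connected (by the first part), it simply lets $\pi$ be any walk in $S_\gamma$ that visits every vertex of $S_\gamma$. Your compactness construction is a legitimate alternative that produces a finite $\pi$ directly on a compact interval without needing $S_\gamma$ to be finite or countable, at the cost of a Lebesgue-number/chain argument; the paper's route is shorter but implicitly relies on $S_\gamma$ being countable (which holds in every application in the paper). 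Your closing observations---that only the ``only if'' half of (ii) is used and that simple-connectedness plays no role here---are accurate.
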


\begin{proof}
    In order to show that \(S \coloneqq \{x \in \Gamma \colon O(x)\cap A \neq \emptyset\}\) is connected, we define an equivalence relation on \(S\) as follows:
    \(x \sim y\) if and only if there is a discrete path from \(x\) to \(y\) in \(G|_S\).
    Assume that there are at least two equivalence classes.
    For an arbitrary \(x \in S\), it then holds that
    \[ U \coloneqq \bigcup_{y \sim x} O(y) \qquad \text{and} \qquad W \coloneqq \bigcup_{y\not\sim x} O(y) \]
    are disjoint open sets with \(A \subset U \cup W\), \(A \cap U \neq \emptyset\) and \(A \cap W \neq \emptyset\).
    This contradicts the fact that \(A\) is connected.
    It thus follows that there is only one equivalence class, hence any two points in \(S\) can be connected by a discrete path in \(S\).

    A continuous path \(\gamma\) is in particular a connected set and hence $S = \{x \in \Gamma \colon \gamma \cap O(x) \neq \emptyset\}$ is connected as a subset of \(G\) by the previous argument.
    Then take \(\pi\) to be a path in \(S\) that traverses all vertices of \(S\).
\end{proof}

The following is an easy consequence of the lemma above, which justifies the claim in parenthesis in Theorem~\ref{thm: connected cutsets}. 

\begin{lemma} \label{lem:one_end} Assume that $M$ is one ended, the sets $O(x)$ are all bounded and any bounded set \(B \subset M\) intersects at most finitely many of the sets \(O(x)\).
Then the graph $G$ is one-ended and locally finite.
\end{lemma}
 
\begin{proof}
We start by showing that for $B \subset M$ bounded, not only does $M\setminus B$ contain a unique unbounded connected component $C$, but further $M\setminus C$ is bounded.
Without loss of generality, we can assume that $B$ is open and connected (otherwise, replace $B$ with an open ball that contains $B$).
Notice that $M \setminus \overline{B}$ contains a unique connected component $C'$ and that $C' \subset C$.
By \eqref{fact:boundary_contained}, $\partial C \subset \partial B$ is bounded, so $M \setminus \partial C$ has exactly one unbounded connected component, which clearly must contain $C'$.
By \eqref{fact:connected_components}, $M \setminus C$ is connected.
Since it is contained in a different connected component of $M\setminus \partial C$ than $C'$, it must be bounded.

Let \(\Gamma \subset V\) be finite.
Take $C$ to be the unique unbounded connected component of $M\setminus {\bigcup_{v\in \Gamma} O(v)}$.
By Lemma~\ref{lem:connected_sets}, any two points \(x,y \in V \cap C\) can be joined by a discrete path in $G$ that does not use any vertices of $\Gamma$, 
hence $V \cap C$ is connected in $G \setminus \Gamma$.
By the considerations above, $M\setminus C$ is bounded,
hence it intersects only finitely many of the sets \(O(x)\), \(x \in V\).
It follows that almost all \(x \in V\) are contained in \(C\).

For any \(x_0 \in V\) it follows from the assumed properties of the family \((O(x))_{x \in V}\) that \(\deg_G(x_0) = |\{x \in V \colon O(x) \cap O(x_0) \neq \emptyset\}| < \infty\), hence \(G\) is locally finite.
\end{proof}

We are now ready to prove Theorem~\ref{thm: connected cutsets}. This result will be useful to prove that graphs induced by $\ep$-nets have uniformly bounded critical parameter, see Theorem~\ref{thm:discretization_percolation} below. The proof is purely based on topological arguments.

\begin{proof}[Proof of Theorem \ref{thm: connected cutsets}]
    Let \(x,y \in G\) and let \(\Gamma\) be a minimal cutset separating them.
    Assume that \(\Gamma\) is not connected, then it can be decomposed into (possibly infinitely many) connected components
    \(\Gamma_1,\Gamma_2,\ldots\), to each of which we associate a set
    \( \widetilde \Gamma_i \coloneqq \bigcup_{v \in \Gamma_i}O(v) \subset M\).
    We further let \(\widetilde \Gamma \coloneqq \bigcup_{v \in \Gamma}O(v)\).
    Notice that the sets \(\widetilde \Gamma_i\) are open and connected, and that they are pairwise disjoint, i.e.,
    \(\widetilde \Gamma_i \cap \widetilde \Gamma_j = \emptyset\) for \(i\neq j\).

    Let \(C\) denote the connected component of \(\widetilde \Gamma_2\) in \(M \setminus \widetilde \Gamma_1\).
    We can assume that \(y\) is contained in \(C\).
    Otherwise, \(y\) and \(\widetilde \Gamma_1\) are both contained in \(M \setminus C\), which is a connected set by \eqref{fact:connected_components}.
    Since \(M \setminus C \subset M \setminus \widetilde \Gamma_2\), it then follows that \(y\) and \(\widetilde \Gamma_1\) are in the same connected
    component of \(M \setminus \widetilde \Gamma_2\).
    In this case, we simply switch the roles of \(\widetilde \Gamma_1\) and \(\widetilde \Gamma_2\).

    Since \(\widetilde \Gamma_1\) is open and connected, \eqref{fact:connected_components} and basic topology yield that both \(C\) and \(M \setminus C\)
    are connected, and that \(C\) is closed.
    It then follows by \eqref{fact:connected_boundary} that \(\partial C = \partial (M \setminus C)\) is connected.
    By \eqref{fact:boundary_contained}, \(\partial C\) is contained in \(\partial \widetilde \Gamma_1\).
    Since the sets \(\widetilde \Gamma_i\) are open and pairwise disjoint, it follows that \(\partial C \cap \widetilde \Gamma = \emptyset\).
    Since the sets \(O(x)\) cover \(M\), we get \(\partial C \subset \bigcup_{v \in V \setminus \Gamma}O(v)\), and since \(\partial C\) is connected, Lemma \ref{lem:connected_sets} implies that
    for any pair of points $v_1,v_2$ such that $O(v_i)\cap\partial C\neq\emptyset$, $i\in\{1,2\}$, there exists a discrete path in \(V \setminus \Gamma\) connecting $v_1$ and $v_2$. 

    We now claim that \(x \notin C\).
    Otherwise, take some \(w \in \Gamma_1\), then \(x,y \in C\) and \(O(w) \cap C = \emptyset\).
    Since \(\Gamma\) is a minimal cutset, there is a discrete path \(\pi\) from \(x\) to \(y\) with \(\Gamma \cap \pi = \{w\}\).
    By the above properties of \(x,y\) and \(w\), the first vertex \(v_1\) of \(\pi\) with \(O(v_1) \cap \partial C \neq \emptyset\) is encountered
    strictly before \(w\), and the last vertex \(v_2\) of \(\pi\) with \(O(v_1) \cap \partial C \neq \emptyset\) is encountered strictly after \(w\).
    By the previous paragraph, we can replace the subpath \((v_1,\ldots,w,\ldots,v_2)\) of \(\pi\) by a path \((v_1,\ldots,v_2)\) in \(V \setminus \Gamma\), which yields a
    modification \(\pi'\) that connects \(x\) to \(y\) and avoids \(\Gamma\) altogether, contradicting the fact that \(\Gamma\) is a cutset.

\begin{figure}
    \centering
    \includegraphics[width=0.6\linewidth]{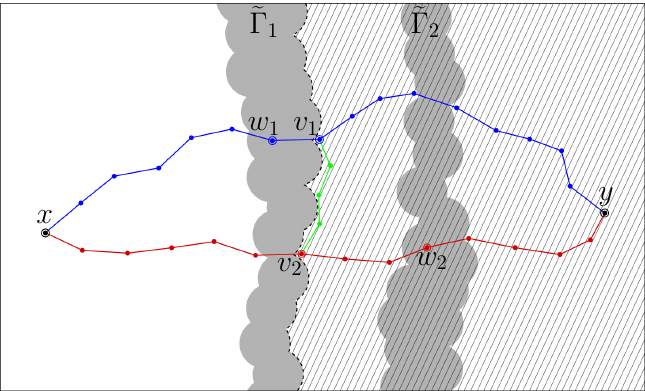}
    \caption{Picture for the proof of Theorem~\ref{thm: connected cutsets}.
    The sets \(\widetilde\Gamma_1\) and \(\widetilde\Gamma_2\) are colored gray,
    the paths \(\pi_1\) and \(\pi_2\) are drawn in blue and red respectively.
    The hatched region is the the set \(C\), its boundary is highlighted by a dashed line.
    By walking along the red path from \(x\) to \(v_2\), then along the green path to \(v_1\) and finally to \(y\) along the blue path, one avoids all vertices of \(\Gamma\).}
    \label{fig:cutset_proof}
\end{figure}

    Choose \(w_1 \in \Gamma_1\) and \(w_2 \in \Gamma_2\).
    Since \(\Gamma\) is a minimal cutset, there exist discrete paths \(\pi_1\) and \(\pi_2\) from \(x\) to \(y\) such that \(\pi_i \cap \Gamma = \{w_i\}\), \(i\in\{1,2\}\).
    It holds that \(O(w_2)\subset \widetilde \Gamma_2\) is contained in the interior of \(C\) and, as observed above, \(x \notin C\).
    Hence, the first vertex \(v_2\) of \(\pi_2\) with \(O(v_2) \cap \partial C \neq \emptyset\) is encountered strictly before \(w_2\).
    Since \(w_1 \in \Gamma_1\) and \(C \subset M \setminus \widetilde \Gamma_1\), it follows that \(O(w_1) \subset M \setminus C\).
    Recall that \(y \in C\).
    It the follows that the last vertex \(v_1\) of \(\pi_1\) with \(O(v_1) \cap \partial C \neq \emptyset\) is encountered strictly after \(w_1\).
    By the property proved two paragraphs above, there exists a discrete path \(\pi' = (v_2,\ldots,v_1)\) in \(V \setminus \Gamma\).
    Now, concatenating the subpath \((x,\ldots,v_2)\) of \(\pi_2\), the path \(\pi'\) and the subpath \((v_1,\ldots,y)\) of \(\pi_1\) yields a path in
    \(V \setminus \Gamma\) that connects \(x\) and \(y\) (cf.~Figure \ref{fig:cutset_proof}).
    This contradicts the fact that \(\Gamma\) is a cutset.
    The first part of the theorem follows.

    Now assume that $G$ is one-ended and locally finite.
    Any (necessarily finite) minimal cutset $\Gamma$ separating $x$ from $\infty$ is then also a minimal cutset separating $x$ from any vertex $y$ in the unique infinite connected component of $V\setminus\Gamma$.
    By the first part of the claim, we conclude that $\Gamma$ is connected.
\end{proof}

The following version of Theorem~\ref{thm: connected cutsets} follows easily.

\begin{corollary}\label{cor:connected cutsets closed}
    Theorem~\ref{thm: connected cutsets} holds if the family of open sets $(O(x))_{x\in V}$ is replaced by a family of closed sets $(C(x))_{x\in V}$ satisfying the same conditions in addition to the following locally finiteness property: for every bounded subset $B\subset M$, $|\{x\in V:\, C(x)\cap B \neq \emptyset\}|<\infty$.
\end{corollary}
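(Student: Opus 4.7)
The plan is to reduce Corollary~\ref{cor:connected cutsets closed} to Theorem~\ref{thm: connected cutsets} by constructing, from the closed family $(C(x))_{x \in V}$, an open family $(O(x))_{x \in V}$ that encodes exactly the same incidence structure. Concretely, I will define open connected sets $O(x) \supset C(x)$ satisfying $\bigcup_x O(x) = M$ and $O(x) \cap O(y) \neq \emptyset$ if and only if $\{x,y\} \in E$; the corollary then follows immediately by applying Theorem~\ref{thm: connected cutsets} to this new family.

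The construction I have in mind is a thickening with position-dependent radius. For each $z \in M$, let $I(z) \coloneqq \{y \in V : z \in C(y)\}$, which is finite by local finiteness. Define
\[
\rho(z) \coloneqq \tfrac{1}{2}\,\min\Bigl(1,\, \inf_{y \in V \setminus I(z)} \dm(z, C(y))\Bigr),
\]
which is strictly positive: among the finitely many cells $C(y)$ meeting the bounded ball $B_M(z,1)$, those with $y \notin I(z)$ are closed sets not containing $z$ (hence at positive distance from $z$), and every remaining cell lies at distance at least $1$ from $z$. Then set
\[
O(x) \coloneqq \bigcup_{z \in C(x)} B_M\bigl(z, \rho(z)\bigr).
\]
Each $O(x)$ is open as a union of open balls, contains $C(x)$ (and hence $x$), and is connected because every ball in the union is a connected set touching the connected set $C(x)$. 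Also $\bigcup_x O(x) \supset \bigcup_x C(x) = M$.

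The only non-routine verification is the intersection condition, and the heart of the argument is a double-bound step. The ``if'' direction is immediate since $C(x) \cap C(y) \subset O(x) \cap O(y)$. For the ``only if'' direction, assume $\{x,y\} \notin E$ with $x \neq y$ and suppose for contradiction that some $w \in O(x) \cap O(y)$. Then there exist $z \in C(x)$ and $z' \in C(y)$ with $w \in B_M(z,\rho(z)) \cap B_M(z',\rho(z'))$. Since $C(x) \cap C(y) = \emptyset$ we have $y \notin I(z)$, so by construction $\dm(z, C(y)) \geq 2\rho(z)$, and since $z' \in C(y)$ this yields $\dm(z,z') \geq 2\rho(z)$. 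Symmetrically $\dm(z,z') \geq 2\rho(z')$. On the other hand, the triangle inequality gives $\dm(z,z') < \rho(z) + \rho(z')$. Combining these forces $\rho(z) < \rho(z')$ and, by symmetry, $\rho(z') < \rho(z)$---a contradiction. This is the step I expect to be the main (if mild) obstacle, and everything hinges on setting up the radii $\rho(z)$ carefully enough that the above chain of inequalities goes through.

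For the part of the conclusion concerning cutsets separating from infinity, I will observe that the local finiteness of $(C(x))$ forces $V$ itself to be locally finite (since $x \in C(x)$), while the cap $\rho(z) \leq \tfrac{1}{2}$ ensures that $O(x)$ lies in the $1$-neighborhood of $C(x)$, hence is bounded whenever $C(x)$ is. Therefore the sufficient criterion for $G$ to be one-ended stated in Theorem~\ref{thm: connected cutsets} transfers directly to our setting, and the conclusion about minimal cutsets separating a vertex from infinity follows from the second part of that theorem.
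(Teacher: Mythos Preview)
Your proposal is correct and follows essentially the same approach as the paper: thicken each closed set $C(x)$ into an open set $O(x)=\bigcup_{z\in C(x)} B_M(z,\rho(z))$ with a carefully chosen positive radius so that no new intersections are created, then invoke Theorem~\ref{thm: connected cutsets}. The only cosmetic difference is that the paper lets the radius depend on both $x$ and $z$ (setting $r_x(z)=\tfrac12\min\{1,\dm(z,\bigcup_{y:\,C(y)\cap C(x)=\emptyset} C(y))\}$), whereas your $\rho(z)$ depends on $z$ alone; your symmetric ``double-bound'' contradiction makes explicit the verification that the paper summarizes as ``by construction''.
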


\begin{proof}
    For every $x\in V$ and $z\in C(x)$, our assumptions on $(C(x))_{x \in V}$ guarantee that the set $S_x(z)\coloneqq\{y\in V:\, C(y)\cap C(x)=\emptyset,\, C(y)\cap B_M(z,1)\neq \emptyset\}$ is finite.
    Hence, the radius $r_x(z)\coloneqq \min\{1,\dm\big(z,\bigcup_{S_x(z)} C(y)\big)\}/2$ is strictly positive.
    Consider the open sets $O(x)\coloneqq \bigcup_{z\in C(x)} B_M(z,r_x(z))$, and note that by construction $C(x)\cap C(y)\neq \emptyset$ if and only if $O(x)\cap O(y)\neq \emptyset$, so we can simply apply Theorem~\ref{thm: connected cutsets}.
\end{proof}

\subsection{Bound on \texorpdfstring{$\overline{p}_u$}{\textbackslash bar\{p\}\_u} for graphs with connected minimal cutsets}
\label{sec:Peierls_G_ep}

Let $G=(V,E)$ be a graph and $\P_p$ denote the law of Bernoulli site percolation on $G$ with parameter $p$. We use the terminology cluster to refer to an open connected component.
Define 
$$p_c(G)\coloneqq\sup\{\,p\in[0,1]: \P_p(\text{there exists an infinite cluster in $G$})=0\},$$
$$p_u(G)\coloneqq \sup\{\,p\in[0,1]: \P_p(\text{there exists a unique infinite cluster in $G$})<1\}$$
and 
$$\overline p_u(G)\coloneqq \sup\Big\{\,p\in[0,1]: \P_p\left(\begin{array}{c}\text{there exists an open infinite cluster $C$ in $G$} \\\text{such that $G\setminus C$ does not contain }\\ \text{an infinite connected component}\end{array}\right)<1\Big\}.$$

\begin{theorem}\label{thm:bound on p_u}Let $G=(V,E)$ be a one-ended graph that has connected minimal cutsets between any pair of vertices. 
Then, we have
\[p_u(G)\le \overline p_u(G)\le \max\{p_c(G), 1-p_c(G)\}.\]
\end{theorem}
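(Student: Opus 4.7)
The inequality $p_u(G)\le\overline p_u(G)$ is immediate from the definitions: whenever there is an infinite open cluster $C$ such that $G\setminus C$ has no infinite connected component, $C$ is automatically the unique infinite open cluster, since any second infinite cluster would lie in $G\setminus C$ and hence be contained in one of its finite components. So the event defining $\overline p_u$ implies the uniqueness event, giving $p_u\le\overline p_u$.

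For the main inequality, fix $p>\max\{p_c(G),1-p_c(G)\}$. Under $\mathbb{P}_p$, almost surely there is at least one infinite open cluster (by $p>p_c$), and every closed cluster is finite (applying the threshold to closed site percolation with parameter $1-p<p_c$). Let $C_\infty$ denote the union of all infinite open clusters. The plan is to show that almost surely, for every vertex $v\notin C_\infty$, there exists a \emph{finite connected} cutset $\Gamma\subset C_\infty$ separating $v$ from infinity in $G$. Such a cutset immediately forces the connected component $H_v$ of $v$ in the induced subgraph on $V\setminus C_\infty$ to sit inside the finite component of $V\setminus\Gamma$ containing $v$, hence to be finite; and uniqueness of the infinite open cluster is a by-product, since otherwise the same argument applied to a vertex of a second infinite open cluster would produce a finite cutset blocking its infinite open path to infinity.

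I plan to produce $\Gamma$ via an iterative hull construction. For a finite connected $A\subset V$, let $M(A):=A\cup\{\text{finite connected components of }V\setminus A\}$; by one-endedness and local finiteness of $G$, $M(A)$ is finite. Let $X^{(0)}$ be the (finite) cluster of $v$ in $G$ and set $Y^{(0)}:=M(X^{(0)})$. The key observation is that $\partial Y^{(k)}$ is a \emph{minimal} cutset separating $v$ from infinity in $G$: by construction of $M$, the complement $V\setminus Y^{(k)}$ consists of a single infinite connected component, so removing any vertex of $\partial Y^{(k)}$ reopens a path from $v$ through that vertex to infinity. Hence Theorem \ref{thm: connected cutsets} (one-ended version) yields that $\partial Y^{(k)}$ is connected in $G$. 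A short induction on $k$ shows that $\partial Y^{(k)}$ is monochromatic, of the color opposite to that of the most recently added cluster: any vertex in $\partial Y^{(k)}$ adjacent to $Y^{(k-1)}$ would already have been absorbed into $\partial Y^{(k-1)}\subset X^{(k)}$. Being connected and monochromatic, $\partial Y^{(k)}$ lies in a single cluster $X^{(k+1)}$ of $G$. If $X^{(k+1)}$ is infinite, it must be open (closed clusters are a.s.\ finite), so $\Gamma:=\partial Y^{(k)}\subset C_\infty$ is the desired cutset and the construction stops; otherwise set $Y^{(k+1)}:=M(Y^{(k)}\cup X^{(k+1)})\supsetneq Y^{(k)}$ and iterate.

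The main remaining hurdle is to show that this iteration almost surely terminates in finitely many steps. If not, then $\bigcup_k Y^{(k)}$ would be an infinite connected subset of $V\setminus C_\infty$ (each $Y^{(k)}$ is built from finite clusters and the interiors they enclose, hence disjoint from $C_\infty$), forcing $H_v$ to be infinite. Note that in a non-terminating run the clusters $X^{(1)},X^{(2)},\ldots$ are pairwise distinct, since $\partial Y^{(k)}$ contains vertices strictly outside $Y^{(k)}$ while all earlier $X^{(j)}$ lie inside $Y^{(k)}$. To close the argument, I plan to exploit that conditional on $Y^{(k)}$ the configuration on $V\setminus Y^{(k)}$ is independent Bernoulli($p$) with $p>p_c$, and that $d(v,C_\infty)$ is almost surely finite; an FKG-type lower bound on the probability that the open cluster $X^{(k+1)}$ chosen at an odd step is infinite, combined with a Borel–Cantelli argument over the distinct steps, should force termination almost surely. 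I expect this termination argument to be the technical heart of the proof.
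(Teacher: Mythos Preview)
Your first paragraph is fine.

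Your approach to the main inequality has two genuine gaps. First, the claim that $\partial Y^{(k)}$ is a \emph{minimal} cutset is false: the fact that $V\setminus Y^{(k)}$ is a single infinite connected component does not imply minimality. Picture $V\setminus Y^{(k)}$ with a thin ``peninsula'' protruding into $Y^{(k)}$; the tip $w$ of the peninsula lies in $\partial Y^{(k)}$, but $\partial Y^{(k)}\setminus\{w\}$ is still a cutset, since any escape from $w$ to infinity must pass along the peninsula through other boundary vertices. You could extract a minimal (hence connected) cutset from $\partial Y^{(k)}$, but then the containment $\partial Y^{(k)}\subset X^{(k+1)}$ used in your inductive monochromaticity argument is lost.

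Second, and more seriously, the termination sketch does not work: conditioning on having reached step $k$ with a specific $Y^{(k)}$ reveals the colours of all vertices of $\partial Y^{(k)}$ (knowing that $X^{(k)}$ is exactly a given cluster forces its outer neighbours to have the opposite colour), so the configuration on $V\setminus Y^{(k)}$ is \emph{not} fresh Bernoulli($p$). Without this, your FKG/Borel--Cantelli plan has no starting point, and there is no evident uniform lower bound on the probability that the next open boundary cluster is infinite.

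The paper sidesteps all of this by arguing the contrapositive deterministically. Fix $p>\max\{p_c,1-p_c\}$ and suppose some infinite open cluster $C$ has an infinite connected component $A$ in $G\setminus C$. The interior boundary $\partial^{\mathrm{int}}A=\{x\in A:\exists\,y\in C,\ \{x,y\}\in E\}$ is entirely closed (an open vertex adjacent to $C$ would belong to $C$) and infinite by one-endedness. One then shows $\partial^{\mathrm{int}}A$ is connected: otherwise take a shortest path $\pi=(x,z_1,\dots,z_m,y)$ in $A$ joining two of its components; since $\partial^{\mathrm{int}}A$ separates $z_1$ from any $v_0\in C$, extract a minimal cutset $D\subset\partial^{\mathrm{int}}A$, which is connected by hypothesis and is forced to contain both $x$ and $y$, a contradiction. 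Thus $\partial^{\mathrm{int}}A$ is an infinite connected closed set, which has probability zero since $1-p<p_c$. No iterative construction or probabilistic termination argument is needed.
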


\begin{proof}
The proof follows almost the same lines as those of \cite[Theorem 10]{BB99}. 
Fix any $p>\max\{p_c(G), 1-p_c(G)\}$. We will prove that if an infinite cluster $C$ is such that $G\setminus C$ has an infinite connected component, then one can find an infinite connected set of closed vertices, thus contradicting the fact that $1-p<p_c(G)$.
Indeed, assume that such a cluster $C$ exists and let $A$ be an infinite connected component of $G\setminus C$.
Define the interior boundary of \(A\) as
\[\partial^\text{int}A \coloneqq \{x\in A :\exists y\in C\quad \{x,y\}\in E\}.\]
All vertices of \(\partial^\text{int}A\) are closed and one easily checks that since \(G\) is one-ended, \(\partial^\text{int}A\) is infinite or some vertex of \(\partial^\text{int}A\) has infinitely many neighbors.
(In the latter case, it holds that \(p_c(G)=0\) and the claim follows immediately, so we assume the former occurs.)
We now show that \(\partial^\text{int}A\) is connected, thus concluding the proof.
Assume otherwise, then there exist paths \(\pi\) in \(A\) with endpoints \(x,y \in \partial^\text{int}A\) such that there is no path \(\wt \pi\) contained in \(\partial^\text{int}A\) with the same endpoints.
Among those paths, choose a \(\pi\) with minimal length.
Clearly, it is of the form \((x,z_1,\ldots,z_m,y)\) with \(m\geq 1\) and \(z_1,\ldots,z_m \in A\setminus \partial^\text{int}A\).
Since \(\partial^\text{int}A\) separates \(z_1\) from any base point \(v_0\in C\), we can extract a minimal cutset \(D \subset \partial^\text{int} A\), which is connected by the conditions of the theorem.
By construction, \(D\) must contain \(x\) and \(y\), so these vertices are connected in \(D\) and hence in \(\partial^\text{int}A\), a contradiction. 
\end{proof}

\begin{proof}[Proof of Theorem~\ref{thm:bound on p_u for Voronoi}]
     Since $\eta_\lambda$ is locally finite almost surely, one can easily verify that Corollary~\ref{cor:connected cutsets closed} applies to the Voronoi graph $\pazocal{G}_M(\eta_\lambda)$, so it has connected minimal cutsets. By Theorem~\ref{thm:bdd-cells_continuous} and Remark~\ref{rem:bdd-cells_curvature}, all the cells of $\pazocal{G}_M(\eta_\lambda)$ are compact almost surely.
     Lemma~\ref{lem:one_end} (applied to the modified family $(O(x))_{x\in\eta_\lambda}$ constructed in the proof of Corollary \ref{cor:connected cutsets closed}) then yields that $\pazocal{G}_M(\eta_\lambda)$ is one-ended.
     The result follows readily from Theorem~\ref{thm:bound on p_u}.
\end{proof}

\subsection{Discretization of \texorpdfstring{$M$}{M}}\label{sec:geometry_G_ep}

In this section, we construct a discretization of $M$ by $\ep$-nets and prove that percolation on the associated graph has uniformly bounded critical parameters under the assumptions of Theorem~\ref{thm:main}.

For each $\varepsilon>0$ let  $V_\varepsilon $ be a countable subset of $M$ satisfying the following properties. 
\begin{enumerate}[(P1)]
    \item \label{property:1}  $B_M(x,\varepsilon)\cap B_M(y,\varepsilon)=\emptyset $ for all $x,y\in V_\ep$, 
    \item \label{property:2} $\bigcup_{x\in V_\ep}B_M(x,2\ep)=M$.
\end{enumerate}
To see that such a set exists, simply consider (say, by Zorn's lemma) a maximal set (with respect to inclusion) of points within distance at least $2\ep$ from each other.

We then define the graph 
$G_\varepsilon=(V_\varepsilon, E_\varepsilon)$ with edge set
\begin{equation}\label{eq:edges_G_ep} 
E_\ep \coloneq \{\{x,y\}\subset V_\ep:~ B_M(x,2\ep)\cap B_M(y,2\ep)\ne \emptyset,\, x \neq y\}.
\end{equation}

The main result of this section says that $p_c(G_\ep)$, $p_u(G_\ep)$ and $\overline p_u(G_\ep)$ are uniformly bounded away from $0$ and $1$ for small $\ep>0$.

\begin{theorem}\label{thm:discretization_percolation} Assume $M$ satisfies conditions \ref{thm cond: global injectivity radius}--\ref{thm cond: log expansion} of Theorem \ref{thm:main}.
Then, for every $\varepsilon_0>0$, there exists a constant $\delta_0=\delta_0(M,\varepsilon_0)>0$ such that 
\begin{equation}
    \delta_0\leq p_c(G_\varepsilon)\leq p_u(G_\varepsilon)\leq \overline p_u(G_\ep)\le 1-\delta_0 ~~~\forall \varepsilon\in(0,\varepsilon_0].
\end{equation}
\end{theorem}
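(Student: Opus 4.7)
The plan is to bound $p_c(G_\ep)$ uniformly in $\ep\in(0,\ep_0]$ from both sides and then invoke Theorem~\ref{thm:bound on p_u} to deduce the bound on $\overline{p}_u(G_\ep)$. Since $p_c(G_\ep)\le p_u(G_\ep)\le\overline{p}_u(G_\ep)$ is immediate, the task splits into the uniform lower bound $p_c(G_\ep)\ge\delta_0$ (routine, from bounded degree), the uniform upper bound $p_c(G_\ep)\le 1-\delta_0$ (the main step, via a Peierls argument using log-expansion), and the verification that Theorem~\ref{thm:bound on p_u} applies.

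First I would verify that $G_\ep$ has a uniformly bounded maximum degree. By hypotheses (i)--(ii) of Theorem~\ref{thm:main}, the Bishop--Gromov and Günther comparison theorems yield constants $c_1,c_2>0$ depending only on $M$ with $c_1 r^d\le\mu_M(B_M(x,r))\le c_2 r^d$ for all $x\in M$ and all $r$ below some threshold. Using (P1) (the balls $B_M(y,\ep)$ for $y\in V_\ep$ are pairwise disjoint) and the fact that neighbors of $x$ in $G_\ep$ lie within $M$-distance $4\ep$, a volume comparison in $B_M(x,5\ep)$ yields $\deg_{G_\ep}(x)\le 5^d c_2/c_1=:\Delta_0$ uniformly for $\ep\le\ep_0$, once $\ep_0$ is small enough. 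The standard Galton--Watson comparison then gives $p_c(G_\ep)\ge 1/(\Delta_0-1)=:\delta_0$. Next, applying Theorem~\ref{thm: connected cutsets} with $O(x):=B_M(x,2\ep)$ (which are open and connected, cover $M$ by (P2), and satisfy $O(x)\cap O(y)\neq\emptyset\Leftrightarrow\{x,y\}\in E_\ep$ by the definition of $E_\ep$), together with Lemma~\ref{lem:one_end} (noting that $M$ is simply connected and one-ended, the $O(x)$ are bounded, and $V_\ep$ is locally finite), shows that $G_\ep$ is one-ended with connected minimal cutsets.

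For the upper bound $p_c(G_\ep)\le 1-\delta_0$, the Peierls argument proceeds as follows. Fix $v_0\in V_\ep$ within $M$-distance $2\ep$ of $\gamma(0)$, where $\gamma$ is the log-expanding path from hypothesis (iv). If $v_0$ lies in a finite open cluster, there is a connected closed minimal cutset $\Gamma$ separating $v_0$ from infinity, and the thickening $\widetilde{\Gamma}:=\bigcup_{x\in\Gamma}B_M(x,2\ep)$ is a connected open subset of $M$ that also separates $v_0$ from infinity. Since $\gamma$ is bi-infinite and unbounded, it must intersect $\widetilde{\Gamma}$ at times $t_-<0<t_+$; if the cutset lies at $M$-distance at least $r$ from $v_0$, the $L$-Lipschitz property of $\gamma$ forces $|t_\pm|\ge r/L$, and log-expansion gives $c\log(2r/L)-C\le d_M(\gamma(t_+),\gamma(t_-))\le\diam(\widetilde{\Gamma})\le 4\ep|\Gamma|$, hence $r\le\exp(O(\ep|\Gamma|))$. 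Combining this depth bound with the standard $(e\Delta_0)^k$ count of connected subgraphs of size $k$ rooted at a given vertex, and with the volume-comparison bound on the ball of radius $r$ around $v_0$, produces a bound $N(k)$ on the number of minimal cutsets of size $k$ around $v_0$. The Peierls sum $\sum_k N(k)(1-p)^k$ is then less than $1$ for $1-p$ smaller than some $\delta_0(M,\ep_0)>0$.

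The main obstacle is to make this last Peierls estimate uniform in $\ep$: the doubly-exponential depth bound from log-expansion alone can combine with the potentially exponential volume growth of $M$ to produce a non-summable count, so the argument has to exploit the full strength of the geometric hypotheses --- and perhaps an upgrade to a linear expansion inequality in the regime where $M$ has exponential volume growth (e.g., $\mathbb{H}^d$) --- to keep the sum under control uniformly in $\ep$. Once $\delta_0\le p_c(G_\ep)\le 1-\delta_0$ is established uniformly, Theorem~\ref{thm:bound on p_u} applied to the one-ended graph $G_\ep$ with connected cutsets yields $\overline{p}_u(G_\ep)\le\max\{p_c(G_\ep),1-p_c(G_\ep)\}\le 1-\delta_0$, which concludes the proof.
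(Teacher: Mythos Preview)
Your overall plan matches the paper's: bounded degree for the lower bound on $p_c$, Theorem~\ref{thm: connected cutsets} with $O(x)=B_M(x,2\ep)$ for connected cutsets and one-endedness, a Peierls argument exploiting the log-expanding path for the upper bound on $p_c$, and finally Theorem~\ref{thm:bound on p_u}. The first two steps and the final reduction are fine.

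The genuine gap is in your Peierls count, and you have in fact identified it yourself. You use the intersection of the cutset with $\gamma$ only to bound the \emph{depth} $r\le e^{O(\ep k)}$, and then propose to root the connected animal of size $k$ at an arbitrary vertex of $V_\ep\cap B_M(v_0,r)$. When $M$ has exponential volume growth (e.g.\ $\H^d$), the number of such vertices is of order $\mu_M(B_M(v_0,r))/\ep^d\asymp e^{c\,r}$, hence doubly exponential in $k$, and the Peierls sum $\sum_k N(k)(1-p)^k$ diverges for every $p<1$. No ``upgrade to linear expansion'' is available under the hypotheses of Theorem~\ref{thm:main}, so that speculative fix does not work.

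The paper's key idea (Lemma~\ref{lem:cutsets->animals}) is to root the animal \emph{on the path} rather than in a ball. One first discretizes $\gamma$ into a bi-infinite path $\pi=(\pi_i)_{i\in\Z}$ in $G_\ep$ with $\pi_0$ near $v_0$, and transfers the log-expansion to graph distance: $\d_{G_\ep}(\pi_i,\pi_j)\ge \tfrac{c}{4\ep}\log|i-j|-C(\ep)$. Any connected cutset $\Gamma$ of size $k$ separating $v_0$ from infinity must contain some $\pi_i$ with $i\ge 0$ and some $\pi_j$ with $j\le 0$; since $k\ge \d_{G_\ep}(\pi_i,\pi_j)$, one gets $i\le e^{C_1\ep k}$ with $C_1$ \emph{independent of $\ep$}. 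Thus $\Gamma$ can be rooted at a vertex in the one-dimensional set $V^x_k=\{\pi_i:0\le i\le e^{C_1\ep k}\}$, and the Peierls sum becomes $\sum_k |V^x_k|\,A^k(1-p)^k\le \sum_k C_0\,(e^{C_1\ep}A)^k(1-p)^k$. Since $\sup_{\ep\in(0,\ep_0]}e^{C_1\ep}A(\ep)<\infty$, this converges for $1-p$ below a threshold uniform in $\ep$. Rooting on the path, not in a ball, is exactly the missing step.
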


\begin{remark}
We in fact prove the following more quantitative version of Theorem \ref{thm:discretization_percolation} for all $\ep>0$:
\begin{equation}\label{eq:perco_bounds_G_ep}
\frac 1 {\Delta(\ep)}\leq p_c(G_\varepsilon)\leq p_u(G_\varepsilon)\leq \overline p_u(G_\ep)\le 1-\frac {1}{e^{C_1\ep}\Delta(\ep)},
\end{equation}
where $\Delta(\ep)$ is defined in \eqref{eq:def_Delta} and $C_1$ is a constant that depends only on $M$.
\end{remark}

The uniform lower bound on $p_c$ follows easily from the fact that $G_\ep$ has uniformly bounded degrees for $\ep\in(0,\ep_0]$, which can be proved thanks to the uniform bound on the curvature \ref{thm cond: uniform bound on curvature} -- see Lemma~\ref{lem:bounded_degree}. For the upper bound on $\overline p_u$, we use Theorem \ref{thm: connected cutsets} (with $O(x)=B_M(x,2\ep)$) to deduce that minimal cutsets are connected for $G_\ep$. Thanks to Theorem \ref{thm:bound on p_u}, a uniform upper bound on $p_c$ is sufficient to conclude. To obtain such a bound, we prove that any connected cutset separating a large ball from infinity contains a large enough path -- see Lemma~\ref{lem:cutsets->paths}.

First, we prove that the graphs $G_\ep$ have uniformly bounded degrees. For $\ep>0$, define
\begin{equation}\label{eq:def_Delta}
 \Delta(\ep) \coloneqq \sup_{x \in M} N(B_M(x,5\ep),\ep), 
\end{equation}
    where the \emph{packing number} \(N(B_M(x,5\ep),\ep)\) is the maximal number of disjoint balls of radius \(\ep\) that fit into \(B_M(x,5\ep)\).

\begin{lemma}\label{lem:bounded_degree}
   Every $x\in V_\ep$ has at most $\Delta(\ep)$ neighbors.
\end{lemma}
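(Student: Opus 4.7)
The plan is to show that each neighbor $y$ of $x$ gives rise to a disjoint $\ep$-ball contained in the larger ball $B_M(x, 5\ep)$, and then apply the definition of the packing number.

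More precisely, I would first observe that if $y \in V_\ep$ is a neighbor of $x$ in $G_\ep$, then by definition $B_M(x,2\ep) \cap B_M(y,2\ep) \neq \emptyset$, so there is a point $z$ with $\dm(x,z) < 2\ep$ and $\dm(y,z) < 2\ep$, and hence $\dm(x,y) < 4\ep$ by the triangle inequality. Applying the triangle inequality once more, for any $w \in B_M(y,\ep)$ one has $\dm(x,w) \leq \dm(x,y) + \dm(y,w) < 4\ep + \ep = 5\ep$, so that $B_M(y,\ep) \subset B_M(x,5\ep)$.

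Next, I would use property (P\ref{property:1}): the balls $\{B_M(y,\ep)\}_{y \in V_\ep}$ are pairwise disjoint, so in particular the family $\{B_M(y_i,\ep)\}_{i=1}^k$ indexed by the neighbors $y_1,\ldots,y_k$ of $x$ is a collection of pairwise disjoint $\ep$-balls contained in $B_M(x,5\ep)$. Therefore $k$ is at most the packing number $N(B_M(x,5\ep),\ep)$, which by definition of $\Delta(\ep)$ is at most $\Delta(\ep)$.

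There is no real obstacle here; the argument is a direct packing-number bound from the two defining properties (P\ref{property:1}) and the adjacency rule \eqref{eq:edges_G_ep}, and it does not even require any of the Riemannian assumptions on $M$ yet (those will be needed later to show that $\Delta(\ep)$ is finite and in fact uniformly bounded for $\ep \in (0,\ep_0]$).
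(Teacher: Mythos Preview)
Your proof is correct and follows essentially the same argument as the paper: you show that each neighbor $y$ satisfies $\dm(x,y)<4\ep$, hence $B_M(y,\ep)\subset B_M(x,5\ep)$, and then invoke (P\ref{property:1}) together with the definition of the packing number to bound the number of neighbors by $\Delta(\ep)$. The only difference is that you spell out the triangle inequality steps in slightly more detail.
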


\begin{proof}
    Note that if $y,x\in V_\ep$ are neighbors, then $\dm(x,y)\le 4\ep$ by \eqref{eq:edges_G_ep}, and therefore $B_M(y,\ep)\subset B_M(x,5\ep)$.  By Property \ref{property:1} of $V_\ep$, we see that the number of neighbors of $x$ is at most $N(B_M(x,5\ep),\ep) \leq \Delta(\ep)$, as we wanted to prove.
\end{proof}

\begin{lemma}\label{lem: bound on Delta} Assume $M$ is a complete manifold satisfying conditions \ref{thm cond: global injectivity radius} and \ref{thm cond: uniform bound on curvature} of Theorem \ref{thm:main}, then it holds for all \(\varepsilon_0>0\) that \[\sup_{\ep\in(0,\ep_0]}\Delta(\ep)<\infty.\]
\end{lemma}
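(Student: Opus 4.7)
The plan is to run a standard packing-versus-volume argument based on Riemannian comparison theorems. If $B_M(y_1,\ep),\dots,B_M(y_N,\ep)$ are pairwise disjoint subsets of $B_M(x,5\ep)$, then disjointness together with the inclusion $\bigcup_i B_M(y_i,\ep) \subset B_M(x,5\ep)$ yields
\[
N\cdot\inf_{y\in M}\mu_M(B_M(y,\ep)) \;\leq\; \sum_{i=1}^{N}\mu_M(B_M(y_i,\ep)) \;\leq\; \mu_M(B_M(x,5\ep)).
\]
So I just need to upper-bound $\mu_M(B_M(x,5\ep))$ and lower-bound $\inf_{y\in M}\mu_M(B_M(y,\ep))$ uniformly in $x$, $y$ and $\ep\in(0,\ep_0]$.

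For the upper bound I would invoke the Bishop--Gromov volume comparison theorem. By assumption \ref{thm cond: uniform bound on curvature}, the sectional curvatures are bounded below by some constant $-\kappa^2$, which implies $\mathrm{Ric}\geq -(d-1)\kappa^2$. Bishop--Gromov then gives $\mu_M(B_M(x,5\ep))\leq V^-(5\ep)$, where $V^-(r)$ denotes the volume of a ball of radius $r$ in the simply connected space form of constant sectional curvature $-\kappa^2$. This is a continuous function of $\ep$, hence bounded on $(0,\ep_0]$.

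For the lower bound I would use G\"unther's volume comparison, which applies in balls of radius smaller than both the injectivity radius and (if the sectional-curvature upper bound $\kappa'^2$ is positive) $\pi/\kappa'$. Thanks to assumption \ref{thm cond: global injectivity radius}, I can fix some $\ep^\star>0$ strictly below both thresholds uniformly across $M$. Then for $\ep\leq\ep^\star$ G\"unther gives $\mu_M(B_M(y,\ep))\geq V^+(\ep)$ uniformly in $y$, where $V^+(r)$ is the volume of a ball of radius $r$ in the space form of constant curvature $\kappa'^2$. For $\ep\in[\ep^\star,\ep_0]$ plain monotonicity gives $\mu_M(B_M(y,\ep))\geq V^+(\ep^\star)>0$. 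Combining these with the Bishop--Gromov bound yields
\[
\Delta(\ep) \;\leq\; \frac{V^-(5\ep)}{V^+(\min(\ep,\ep^\star))},
\]
which is bounded on $(0,\ep_0]$: as $\ep\to 0$ numerator and denominator are both asymptotic to the Euclidean volumes $\omega_d(5\ep)^d$ and $\omega_d\ep^d$, so the ratio tends to $5^d$; and on $[\ep^\star,\ep_0]$ the numerator is bounded above and the denominator is a fixed positive constant.

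The only mildly delicate point is ensuring that G\"unther is applicable on a fixed macroscopic scale $\ep^\star$, rather than merely in the infinitesimal limit. This is exactly what the positive global injectivity radius in assumption \ref{thm cond: global injectivity radius} provides, and it is the sole reason the entire argument is uniform in the base point $y\in M$.
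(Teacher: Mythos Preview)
Your proposal is correct and follows essentially the same approach as the paper: bound the packing number by the ratio of ball volumes, use Bishop--Gromov for the numerator and G\"unther for the denominator (with the global injectivity radius and curvature upper bound ensuring the latter applies on a fixed scale), and observe that the resulting ratio is continuous in $\ep$ with limit $5^d$ as $\ep\to 0$. The paper's proof is identical up to notation.
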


\begin{proof}
    For \(c,r \in \R\), let \(V_c(r)\) denote the volume of a ball of radius \(r\) in a \(d\)-dimensional space form of constant sectional curvature \(c\).
    More precisely, \(V_c(r) = \omega_{d} \int_0^r s_c(t)^{d-1} dt\), where \(\omega_d \coloneqq 2 \pi^{d/2}/\Gamma(d/2)\) is the surface area of the \(d\)-dimensional euclidean unit ball and
    \begin{equation}\label{eq:def_sc}
        s_c(r) = \begin{cases}
        r,              & c=0,\\
        R \sin(r/R),    & c = 1/R^2 > 0,\\
        R \sinh(r/R),   & c = -1/R^2 < 0,
    \end{cases}
    \end{equation}
    for \(r \in \R\) (cf.~\cite[Equations (III.4.1), (III.3.10) and (II.5.8)]{Chavel}).
    Choose \(c>0\) such that all sectional curvatures of \(M\) lie in \([-c,c]\).
    Then \cite[Equation (III.4.21)]{Chavel} implies that
   \begin{equation}\label{eq:upper_bound_volume}
       \mu_M(B_M(x,r)) \leq V_{-c}(r) 
   \end{equation}
    for all \(x \in M\) and \(r>0\).
    Further, it holds by \cite[Equation (III.4.10)]{Chavel} that
    \begin{equation}\label{eq:lower_bound_volume}
        \mu_M(B_M(x,r)) \geq V_c(r)
    \end{equation}
    for all \(x\in M\) and \(r \leq \min\{r_\text{inj}(M),\pi/\sqrt{c}\}\).

    For any \(\ep>0\) it then holds that
    \[ \Delta(\ep) \leq \frac{\sup_{x\in M}\mu_M(B_M(x,5\ep))}{\inf_{y\in M}\mu_M(B_M(y,\ep))} \leq \frac{V_{-c}(5\ep)}{V_c(\min\{\ep,r_\text{inj}(M),\pi/\sqrt{c}\})}. \]
    The last term is continuous in \(\ep\) and one easily checks that it goes to \(5^d\) for \(\ep \to 0+\).
    The claim then follows directly.
\end{proof}

It turns out that the same constant $\Delta(\ep)$ given by \eqref{eq:def_Delta} can be used to compare the graph distance $\d_{G_\ep}$ in $G_\ep$ with the geodesic distance $\dm$.

\begin{lemma}\label{lem:quasi_isometry}
    For every $\ep>0$ and every distinct vertices $x,y\in V_\ep$,
    \begin{equation}
        \frac{1}{4\varepsilon} \dm(x,y)\leq \d_{G_\ep}(x,y) \leq \frac{\Delta(\ep)}{2\varepsilon} \dm(x,y).
    \end{equation}
\end{lemma}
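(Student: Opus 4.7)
The lower bound is immediate. Along any shortest walk $x = x_0, x_1, \ldots, x_n = y$ in $G_\ep$, each edge $\{x_i, x_{i+1}\} \in E_\ep$ forces $\dm(x_i, x_{i+1}) < 4\ep$ (take a point in the overlap of the two $2\ep$-balls and apply the triangle inequality), so $\dm(x,y) < 4\ep\cdot n$ by a second triangle inequality.

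The upper bound is the main content. The plan is to discretize a minimizing geodesic between $x$ and $y$ and convert each small piece into a connected subset of $G_\ep$ via Lemma~\ref{lem:connected_sets}. Since $M$ is complete, Hopf--Rinow provides a minimizing geodesic $\gamma:[0,L]\to M$ from $x$ to $y$ with $L = \dm(x,y)$. I would then partition $[0,L]$ into $n = \lceil L/(4\ep)\rceil$ equal subintervals $I_1,\ldots,I_n$, each of length at most $4\ep$, and let $z_k$ denote the midpoint of $I_k$, so that $\gamma(I_k) \subset B_M(z_k, 2\ep)$.

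For each $k$, I would introduce the set $T_k \coloneqq \{v \in V_\ep : B_M(v,2\ep) \cap \gamma(I_k) \neq \emptyset\}$. Property \ref{property:2} and Lemma~\ref{lem:connected_sets} applied to the connected set $\gamma(I_k)$ ensure that $T_k$ induces a connected subgraph of $G_\ep$. Moreover, any $v \in T_k$ satisfies $\dm(v, z_k) < 2\ep + 2\ep = 4\ep$ (by picking any $p \in B_M(v, 2\ep) \cap \gamma(I_k) \subset B_M(z_k, 2\ep)$), hence $B_M(v,\ep) \subset B_M(z_k, 5\ep)$, and property \ref{property:1} then yields the crucial packing bound $|T_k| \leq \Delta(\ep)$. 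This packing estimate is the main technical input of the proof.

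To assemble the walk, for each $k\in\{1,\ldots,n-1\}$ the shared endpoint $\gamma(kL/n)$ is covered by some $B_M(w_k, 2\ep)$ with $w_k \in V_\ep$ (using \ref{property:2}), and by construction $w_k \in T_k \cap T_{k+1}$. Setting $w_0 = x \in T_1$ and $w_n = y \in T_n$, I would concatenate, for $k=1,\ldots,n$, a discrete path from $w_{k-1}$ to $w_k$ inside the connected set $T_k$; each such path has length at most $|T_k|-1 \leq \Delta(\ep)-1$, producing a walk in $G_\ep$ from $x$ to $y$ of length at most $n(\Delta(\ep)-1)$. A brief case analysis using $L \geq 2\ep$ (from property \ref{property:1}) then closes the bound: for $L \geq 4\ep$ one has $\lceil L/(4\ep)\rceil \leq L/(2\ep)$, while for $2\ep \leq L < 4\ep$ the left-hand side is $\Delta(\ep)-1$ and the right-hand side is at least $\Delta(\ep)$; in either case $\lceil L/(4\ep)\rceil(\Delta(\ep)-1) \leq L\Delta(\ep)/(2\ep)$.
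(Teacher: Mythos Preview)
Your proof is correct and follows essentially the same approach as the paper: discretize a minimizing geodesic, invoke Lemma~\ref{lem:connected_sets} with $O(v)=B_M(v,2\ep)$, and control the number of net points near the geodesic via the packing constant $\Delta(\ep)$ using property~\ref{property:1}. The only cosmetic difference is bookkeeping: the paper applies Lemma~\ref{lem:connected_sets} once to the whole geodesic to obtain a single simple path and then covers the $3\ep$-tube by $\lfloor L/2\ep\rfloor$ balls of radius $5\ep$, whereas you chop $\gamma$ into $\lceil L/4\ep\rceil$ pieces, bound each $|T_k|\le\Delta(\ep)$ separately, and concatenate.
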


\begin{proof}
    By definition, if $x$ and $y$ are neighbors in $G_\ep$, then $\dm(x,y)\leq 4\ep$. By considering a shortest path between $x$ and $y$ in $G_\ep$ and applying the triangle inequality, it follows that $\dm(x,y)\leq 4\ep \d_{G_\ep}(x,y)$. We now prove the second inequality. Fix two distinct vertices $x,y\in G_\ep$ and let $\gamma=(\gamma_t)_{t\in[0,\ell]}$ be a minimizing geodesic\footnote{Since $M$ is connected and complete, it holds that for any two points $x,y \in M$ there is a geodesic of minimal length $d_M(x,y)$ connecting $x$ and $y$ (see, e.g., \cite[Corollary 6.21]{LeeRM}). We call this curve a \emph{minimizing geodesic}. Note that it may not be unique.} in $M$ between $x$ and $y$, parametrized so that $\dm(\gamma_s,\gamma_t)=|t-s|$ and $\ell=\dm(x,y)$. 
    By Lemma~\ref{lem:connected_sets}, we can find a simple path $x=z_0,z_1,\ldots,z_k=y$ in $G_\ep$ with $z_i\in V_\ep \cap \{y\in M:~\dm(y,\gamma)<2\ep\}$ for every $i$.
    In particular, 
    $$B_M(z_i,\varepsilon)\subset \{y\in M:~\dm(y,\gamma) < 3\ep\}\subset \bigcup_{j\in\{1,\ldots,\lfloor \ell/2\ep \rfloor\}} B_M(\gamma_{2\ep j},5\ep).$$ 
    Therefore, by Property \ref{property:1} of $V_\ep$, we have \[k\leq \sum_{j\in\{1,\ldots,\lfloor \ell/2\ep \rfloor\}} N(B_M(\gamma_{2\ep j},5\ep),\ep)\leq \frac{\Delta(\ep)}{2\varepsilon} \ell,\] and the desired inequality $\d_{G_\ep}(x,y) \leq \frac{\Delta(\ep)}{2\varepsilon} \dm(x,y)$ follows.
\end{proof}

The following lemma will be used to count the number of possible minimal cutsets separating $x$ from infinity, which is crucial to prove the uniform upper bound on $p_c$. Let $\mathcal{P}_\ep (v,n)$ be the set of all simple paths of length $n$ starting from a vertex $v\in G_\ep$.

\begin{lemma}\label{lem:cutsets->paths}
    Assume that $M$ satisfies the conditions of Theorem \ref{thm:main}. 
    Let $\ep>0$ and  $x\in V_\ep$.
    There exists a constant $C_1$ independent of $\ep$ and $x$ as well as constants $C_0=C_0(\ep,x),c_0=c_0(\ep,x)>0$  such that the following holds:
    For every $k\ge 1$ there exists a subset  $V_k^x \subset V_\ep$ such that $|V_k^x|\le C_0e^{C_1\ep k}$ and, for every $n\geq 1$, any
    connected cutset contained in $G_\ep \setminus B_{G_\ep}(x,n)$ separating $B_{G_\ep}(x,n)$ from infinity contains at least one path in
    \[\bigcup_{k\ge c_0\log n-C_0}\bigcup_{v\in V_k^x}\mathcal{P}_{\ep}(v,k).\]
\end{lemma}

\begin{proof}
    Let $\gamma=(\gamma_t)_{t\in\R}$ be a log-expanding bi-infinite path in $M$, chosen independently of \(x\) and \(\ep\).
    Recall that \(\gamma\) is Lipschitz continuous and satisfies
    \[\dm(\gamma_t,\gamma_s)\ge c\log |t-s|-C,\]
    for some constants \(c,C>0\).
    Without loss of generality we will assume \(\gamma\) to be $1$-Lipschitz continuous, as this can always be achieved be reparametrizing.

    Now fix \(\ep > 0\) and \(x \in V_\ep\).
    At the expense of changing the constant \(C\) (while leaving \(c\) unchanged) we can modify \(\gamma\) in such a way that it passes through \(x\) while preserving $1$-Lipschitz continuity and the above inequality.
    For convenience, we will assume that \(\gamma_0 = x\).
    
    We claim that there exists a bi-infinite path $\pi=(\pi_i)_{i\in\Z}$ in $G_\ep$ such that $\pi_0=x$ and 
    \begin{equation}\label{eq:quasi_bigeodesic}
        \d_{G_\ep}(\pi_i,\pi_j)\geq \frac{c}{4\ep}\log |i-j|  - C(\ep,x)
    \end{equation}
    for every $i,j\in\Z$, where $C(\ep,x)>0$ is a constant depending only on $\ep$ and $x$. Indeed, we will construct such a path in a similar way as in the proof of Lemma~\ref{lem:quasi_isometry}. Define the times $t_k\coloneq 4\ep k$, $k\in\Z$. We can apply Lemma~\ref{lem:connected_sets} to each subpath $\gamma_{[t_k,t_{k+1}]}\coloneq (\gamma_t)_{t\in[t_k,t_{k+1}]}$ in order to obtain a discrete simple path $\pi^{(k)}=(\pi^{(k)}_1,\ldots,\pi^{(k)}_{n_k})$ such that $\gamma_{t_k}\in B_M(\pi^{(k)}_1,2\ep)$, $\gamma_{t_{k+1}}\in B_M(\pi^{(k)}_{n_k},2\ep)$ and $\d(\pi^{(k)}_i,\gamma_{[t_k,t_{k+1}]})<2\ep$ for every $i\in \{1,\ldots,n_k\}$. We can further assume that $\pi^{(0)}_1=x$. Now, define $\pi$ as the concatenation of the paths $\pi^{(k)}$, $k\in\Z$, in such a way that $\pi_0=\pi^{(0)}_1=x$.
    Note that \(\pi\) is not necessarily a simple path.
    
    We will now prove that $\pi$ constructed above indeed satisfies \eqref{eq:quasi_bigeodesic}. Since $\gamma$ is $1$-Lipschitz continuous, we have for any $t\in[t_k,t_{k+1}]$ that  $\dm(\gamma_{s_k},\gamma_{t})\le 2\ep$, where $s_k\coloneqq \frac{t_k + t_{k+1}}{2}$.  It follows that $B_M(\pi^{(k)}_i,\ep)\subset B_M(\gamma_{s_k},5\ep)$ for every $i\in \{1,\ldots,n_k\}$. In particular, due to Property~\ref{property:1} of $V_\ep$, we have $n_k\leq \Delta(\ep)$ for every $k\in\Z$. Now, given $i\neq j$, let $s\geq 0$ be such that $|i-j|\in (\Delta(\ep) s, \Delta(\ep)(s+1)]$, and $m,\ell \in \Z$ be such that $\pi_i\in \pi^{(m)}$ and $\pi_j\in \pi^{(\ell)}$. Since $n_k\leq \Delta(\ep)$ for every $k\in\Z$, it follows that $|m-\ell|\geq s$. Now, by Lemma~\ref{lem:quasi_isometry} and the triangle inequality,
    \begin{align}
        \d_{G_\ep}(\pi_i,\pi_j)\geq (4\ep)^{-1} \dm(\pi_i,\pi_j)&\geq (4\ep)^{-1} \left(\dm(\gamma_{s_m},\gamma_{s_\ell}) - \dm(\pi_i,\gamma_{s_m}) - \dm(\pi_j,\gamma_{s_\ell}) \right) \\ 
        &\geq (4\ep)^{-1} \left(c \log |m-\ell| +c\log(4\ep)- C- 4\ep - 4\ep \right)\\ 
        &\geq (4\ep)^{-1}(c\log s +c\log(4\ep)-C) -2 \label{eq:constbound}\\&\geq c(4\ep)^{-1}\log |i-j|  - C(\ep,x),   \end{align}
    for a suitably chosen \(C(\ep,x)\).
    We define 
    $$V^x_k\coloneqq \{\pi_i:~0\leq i\leq \exp(4c^{-1}\ep(k+C(\ep,x)))\}$$ 
    and note that $|V_k^x|\leq C_0(\ep,x) e^{C_1 k\ep}$ for $C_0(\ep,x)$ large enough depending on $\ep$ and \(C_1 = 4c^{-1}\).
     
    Let $\Gamma$ be a connected cutset separating $B_{G_\ep}(x,n)$ from infinity, and let $k=|\Gamma|$. Therefore, it only remains to prove that $k\geq c_0 \log n- C_0$ and that there exists $v\in\Gamma\cap V_k^x$. Since $\Gamma$ is a cutset and both $(\pi_i)_{i\geq 0}$ and $(\pi_j)_{j\leq 0}$ are paths from $B_{G_\ep}(x,n)$ to infinity, we can find $i\geq 0$ and $j\leq 0$ such that $\pi_i,\pi_j \in\Gamma$. Since $\Gamma\cap B_{G_\ep}(x,n)=\emptyset$, we have $\Gamma\cap B_{G_\ep}(x,n-1) =\emptyset$, and in particular $i\geq n$ and $j\leq -n$. By the fact that $\Gamma$ is connected and \eqref{eq:quasi_bigeodesic}, we deduce that 
    \begin{equation}
        k=|\Gamma|\geq \d_{G_\ep}(\pi_i,\pi_j)\geq (4\ep)^{-1}c\log|i-j|-C(\ep,x).
    \end{equation}
    Since $|i-j|\geq 2n$, we deduce that $k\geq c_0 \log n-C(\ep,x)$. Since $|i-j|\geq i$, we deduce that $i\leq \exp(4c^{-1}\ep(k+C(\ep,x)))$, and in particular $\pi_i\in V_k^x$, the claim follows by considering a path contained in $\Gamma$ joining $\pi_i$ and $\pi_j$.
\end{proof}

We are now finally ready to present the proof of Theorem~\ref{thm:discretization_percolation}.

\begin{proof}[Proof of Theorem \ref{thm:discretization_percolation}] Let $\ep\in(0,\ep_0]$. We obviously have $p_c(G_\ep)\le p_u(G_\ep)\le \overline p_u(G_\ep)$ by definition. 
We will now prove that $p_c(G_\ep)\geq \Delta(\ep)^{-1}$.
By Lemma \ref{lem: bound on Delta}, this leads to a uniform lower bound for \(\ep \in (0,\ep_0]\). 
Let $\mathcal{P}_\ep (v,n)$ be the set of all simple paths of length $n$ starting from a vertex $v\in G_\ep$. Notice that $|\mathcal{P}_\ep (v,n)|\leq \Delta(\ep)^n$ by Lemma~\ref{lem:bounded_degree}. 
We write $v\leftrightarrow\infty$ for the event that there exists an infinite simple open path in $G_\ep$ starting from $v\in G_\ep$. Therefore, for any $v\in G_\ep$ and $p<\Delta(\ep)^{-1}$, a union bound gives
\begin{equation}\label{eq:subcritical_Peierls}
\P_p(v\leftrightarrow\infty)=\lim_{n\rightarrow\infty}\P_p(\bigcup_{\pi\in \mathcal{P}_\ep (v,n)} \{\pi \text{ is fully open}\} )\le\lim_{n\rightarrow\infty} (\Delta(\ep) p)^n =0.
\end{equation}
Hence, since the number of vertices in $G_\ep$ is countable, we obtain that there exists almost surely no infinite cluster when $p<\Delta(\ep) ^{-1}$. This implies the first inequality of \eqref{eq:perco_bounds_G_ep}, which is uniform over $\ep\in(0,\ep_0]$ by Lemma~\ref{lem: bound on Delta}.

By Theorem \ref{thm: connected cutsets} and Lemma \ref{lem:one_end} (with $O(x)=B_M(x,2\ep)$, $x\in V_\ep$), we have that minimal cutsets (between any two vertices as well as between a vertex and infinity) in $G_\ep$ are connected and that $G_\ep$ is one-ended.
Thanks to Theorem \ref{thm:bound on p_u}, we have $\overline p_u(G_\ep) \le \max\{p_c(G_\ep),1-p_c(G_\ep)\}$. Hence, it remains to prove a uniform upper-bound on $p_c(G_\ep)$ in order to conclude.

Let $x\in V_\ep$, $n\ge 1$ and let $V_k^x$ be the sets defined in the statement of Lemma~\ref{lem:cutsets->paths}. If the open cluster of $B_{G_\ep}(x,n)$ (the union of $B_{G_\ep}(x,n)$ with all the open clusters intersecting it) is finite, its boundary is a cutset from $x$ to infinity, from which we can extract a minimal (and hence connected) cutset $\Gamma$ from $x$ to infinity. Moreover, this set $\Gamma$ is fully closed.
By Lemma~\ref{lem:cutsets->paths} and a union bound, we have
\begin{equation*}
    \begin{split}
        \P_p(B_{G_\ep}(x,n)\not \leftrightarrow\infty )&\le \sum_{k\ge c_0\log n-C_0 }\sum_{w\in V_k^x}\sum_{\pi\in\mathcal{P}_{\ep}(w,k)}\P_p(\pi \text{ is fully closed})\\&= \sum_{k\ge c_0\log n-C_0}\sum_{w\in V_k^x}\sum_{\pi\in\mathcal{P}_{\ep}(w,k)}(1-p)^{k}
        \le \sum_{ k\ge c_0\log n-C_0}C_0 e^{C_1 k \ep} \Delta(\ep)^k (1-p)^{k} .
    \end{split}
\end{equation*}
Recall that \(c_0,C_0\) depend on \(\ep\) while \(C_1\) does not.
For \(p > 1 - e^{-C_1\ep} \Delta(\ep)^{-1}\), it follows readily from the previous calculation that for $n$ large enough depending on $\ep$ and $x$,
\begin{equation}\P_p(B_{G_\ep}(x,n)\not \leftrightarrow\infty ) < 1.
\end{equation}
In particular, this implies the second inequality in \eqref{eq:perco_bounds_G_ep}, which is uniform over $\ep\in(0,\ep_0]$ by Lemma~\ref{lem: bound on Delta}.
\end{proof}

\begin{remark}\label{rem:pc>0_minimalassumption}
    Note that in order to prove that $p_c(G_\ep)>0$, we only used that $G_\ep$ has bounded degree, which followed from the fact that $M$ has bounded curvature. Therefore, assumptions \ref{thm cond : connected+one-ended} and \ref{thm cond: log expansion} (from Theorem \ref{thm:main}) are unnecessary for this purpose.
\end{remark}

\begin{remark}\label{rem:exp_cutsets_transience}
    Note that connectedness of cutsets is not a priori necessary to prove $p_c(G_\ep)<1$. Indeed, it is enough to show that the number of minimal cutsets of a given size grows at most exponentially. Therefore, if one only wanted to bound $p_c(G_\ep)$ -- and, as a consequence, only prove that $p_c(M,\lambda)\to p_c(\R^d)$, see Section~\ref{sec:fine_graining} and Remarks~\ref{rem:sub_fine-graining_minimalassumption} and \ref{rem:sup_fine-graining_pc_only} therein -- one could replace assumptions \ref{thm cond : connected+one-ended} and \ref{thm cond: log expansion} by some geometric property of $M$ that can be shown to imply such an exponential bound on the number of minimal cutsets in $G_\ep$. In \cite{EST24}, a new approach was developed to upper bound the number of minimal cutsets even when one cannot hope for them to be connected in any sense. More precisely, such a bound is obtained by only assuming that graph is uniformly transient. Therefore, one could hope to either adapt the proof of \cite{EST24} to the continuum setting or try to directly prove that $G_\ep$ is uniformly transient, in order to show that $p_c(G_\ep)<1$ under the assumption that the Brownian motion on $M$ is (uniformly) transient (or that $M$ has isoperimetric dimension $>2$). We do not pursue this direction here, but we believe that this approach is plausible.
\end{remark}

\section{Local comparison}\label{sec:localcomparison}

In this section, we prove a technical comparison statement that allows us to directly relate, for $\lambda$ large, a Voronoi diagram on \(\R^d\) to one on \(M\) in a small neighborhood of a point.
This is the content of Proposition \ref{prop:comparison} below.

Although it will not be relevant for the proof, we want to point out that the related question of studying properties of the typical Poisson--Voronoi cell as \(\lambda \to \infty\) has been treated in \cite{calka2021poisson}.

Recall that for a locally finite set \(\omega \subset M\), $\pazocal G_M(\omega)$ denotes the graph induced by the Voronoi tessellation associated to $\omega$ in $M$.
Further, for any graph \(\pazocal G\) with vertex set \(V\) and edge set \(E\), we define the restriction \(\pazocal G |_S\) to a set \(S\)
as the graph with vertex set \(V \cap S\) and an edge between \(x,y \in V \cap S\) if and only if they are neighbors in \(\pazocal G\).

We define the \emph{diameter} of a nonempty set \(A \subset M\) as \(\diam_M(A) \coloneqq \sup_{x,y\in A} \dm(x,y) \).
For a Poisson point process on \(M\) with intensity measure \(\mu\), we introduce the shorthand notation \(\eta \sim \PPP(M,\mu)\).
While we will view a Poisson point process as a random point set most of the time, we will sometimes implicitly interpret it as a random measure when convenient.

A \emph{chart} \(\varphi\) on \(M\) is a homeomorphism \(\varphi \colon \dom(\varphi) \to W \) from an open set \(\dom(\varphi) \subset M\)
to an open set \(W \subset \R^d\).
For a chart \(\varphi\) on \(M\), the \emph{pushforward} of the distance function \(d_M\) is defined as
\[ \varphi_* d_M(x,y) \coloneqq d_M(\varphi^{-1}(x), \varphi^{-1}(y)),\quad x,y \in \im(\varphi), \]
while the pushforward of a measure \(\mu\) on \(M\) is defined in the usual way as
\[ \varphi_* \mu(A) \coloneqq \mu(\varphi^{-1}(A)), \quad A \subset \R^d \text{ measurable}. \]
We further define the pushforward \(\varphi_* \pazocal G_M(\eta)\) of a Voronoi graph on \(M\) as follows:
The vertex set is given by \(\varphi(\eta \cap \dom(\varphi))\) and two vertices \(\varphi(x)\) and \(\varphi(y)\) are neighbors
if and only if \(x\) and \(y\) are neighbors.
For a set \(A \subseteq M\) not necessarily contained in \(\dom(\varphi)\),
we introduce the convention \(\varphi(A) \coloneqq \varphi(A\cap\dom(\varphi))\).

The goal is now to find, for \(x_0 \in M\), a coupling \((\eta,\xi)\) of processes \(\eta \sim \PPP(\lambda \mu_M)\), \(\xi \sim \PPP(\lambda dx)\) and a chart
\(\varphi\) with \(x_0 \in \dom(\varphi)\) so that with high probability \(\varphi_* \pazocal G_{M}(\eta)\) and \(\pazocal G_{\R^d}(\xi)\) agree in a ball around \(\varphi(x_0)\).
Asking for the even stronger condition that not only the Voronoi graphs but also the Voronoi tessellations agree
(i.e., \(\varphi(C(y;\eta)) = C(\varphi(y); \xi)\) for \(y\in\eta\) close to \(x_0\)) is in general too much to ask for --
this is possible when \(\varphi\) is an isometry, but such charts do not exist when the curvature at \(x_0\) is non-zero.
\paragraph{\textbf{Normal coordinates.}}
We quickly review the notion of normal coordinates, the reader is referred to \cite[p.~131-3]{LeeRM} for a more detailed introduction.
For \(x \in M\) the \emph{exponential map} \(\exp_x \colon T_x M \to M\) maps \(v \in T_x M\) to \(\gamma_v(1)\) where \(\gamma_v\) is the unique maximal geodesic
through \(x\) with velocity \(v\) at \(x\) ($\exp_x$ is defined on the entire tangent space since $M$ is complete).
The \emph{injectivity radius} at \(x\), \(r_\text{inj}(x)\), is the largest \(r>0\) such that \(\exp_x\) is a diffeomorphism from \(B_{T_x M}(0,r)\) onto its image.
(When the exponential map is a diffeomorphism on the entire tangent space, the injectivity radius at \(x\) is \(\infty\).)
The injectivity radius of \(M\) is then defined as \(r_\text{inj}(M) \coloneqq \inf_{x \in M} r_\text{inj}(x)\).
Note that throughout most of the paper we require that \(r_\text{inj}(M)>0\).

Let \( r \leq r_\text{inj}\) and \(x_0 \in M\).
Take any orthonormal basis \(v_1,\ldots,v_d\) of \(T_{x_0} M\) and let \(\Phi \colon \R^d \to T_{x_0} M\) be the linear map sending \(e_i\) to \(v_i\).
Then \(\exp_{x_0} \circ \Phi |_{B_{\R^d}(0,r)}\) is a diffeomorphism from \(B_{\R^d}(0,r)\) to \(B_M({x_0},r)\) (cf.~\cite[Corollary 6.13]{LeeRM}).
Its inverse
\[ \Phi^{-1} \circ \exp_{x_0}^{-1} \colon B_M({x_0},r) \to B_{\R^d}(0,r) \]
is a \emph{normal coordinate chart at \({x_0}\)}.
We let \(\pazocal{E}({x_0},r)\) denote the collection of all such charts.
Note that the only freedom in the above definition is the choice of an orthonormal basis,
so for any \(\varphi,\widetilde\varphi \in \pazocal{E}(x_0,r)\) there is a matrix \(Q \in O(n)\) such that \(\widetilde\varphi = Q \varphi\).
Note further that when $0 < s \leq r \leq r_\text{inj}$ and $\varphi \in \pazocal{E}(x_0,r)$, then its restriction to $B_M(x_0,s)$ satisfies $\varphi|_{B_M(x_0,s)} \in \pazocal{E}(x_0,s)$.

The following lemma gives some useful properties of normal coordinates.
Its proof is deferred to the appendix.

\begin{lemma}\label{lem:normal}
    Assume that all sectional curvatures of \(M\) lie in \([-K,K]\), where \(K\) is a positive constant.
    Let \(x_0 \in M\) and \(\varphi \in \pazocal{E}(x_0,r)\) where
    \begin{equation}
    \label{eq:radius_condition}
        0 < r \leq \min\left\{\frac{r_\text{inj}(M)}{2} , \frac{\pi}{2\sqrt{K}} \right\}.
    \end{equation}
    Then the following holds:
    \begin{enumerate}[(1)]
        \item \(\varphi_* \d_M(0,x) = \de(0,x)\), for all \(x \in B_{\R^d}(0,r)\),
        \item \((1-Kr^2) \de(x,y) \leq \varphi_* \d_M(x,y) \leq (1+Kr^2) \de(x,y)\), for all \(x,y \in B_{\R^d}(0,r)\),
        \item \((1-Kr^2)^{d/2} |A| \leq \varphi_*\mu_M(A) \leq (1+Kr^2)^{d/2} |A|\), for all measurable \(A \subset B_{\R^d}(0,r)\), where \(|A|\) denotes the Lebesgue
        measure of \(A\).
    \end{enumerate}
\end{lemma}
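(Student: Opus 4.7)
The proof rests on three standard tools from Riemannian geometry: Gauss's lemma, Rauch's Jacobi field comparison theorem, and Whitehead's convexity theorem. Property (1) is immediate from the defining property of normal coordinates: the curve $t \mapsto \varphi^{-1}(tx/|x|)$ is a geodesic emanating from $x_0$ parametrized by arc length, of total length $|x|$, and it is minimizing since $|x| < r_\text{inj}(M)$.

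For the core estimate, I would establish a pointwise comparison of the push-forward metric $\tilde g \coloneqq \varphi_* g$ with the Euclidean metric on $B_{\R^d}(0, r)$. Gauss's lemma ensures that the radial direction is a unit eigenvector of $\tilde g_x$ (with eigenvalue $1$) and that its Euclidean orthogonal complement coincides with the $\tilde g_x$-orthogonal one, which is therefore $\tilde g_x$-invariant. On this complement, Rauch's theorem (using that sectional curvatures lie in $[-K,K]$) yields
\[ \left(\frac{s_K(|x|)}{|x|}\right)^2 \le \frac{\tilde g_x(v,v)}{|v|^2} \le \left(\frac{s_{-K}(|x|)}{|x|}\right)^2 \]
for $v$ perpendicular to the radial direction, with $s_c$ as in \eqref{eq:def_sc}. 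Taylor-expanding $\sin$ and $\sinh$ under the hypothesis $\sqrt{K}\,r \le \pi/2$, one checks that all eigenvalues of $\tilde g_x$ lie in $[1 - Kr^2, 1 + Kr^2]$ for every $|x| \le r$, and hence $\sqrt{\tilde g_x(v,v)}/|v| \in [1 - Kr^2, 1 + Kr^2]$ for every nonzero $v$ (using $\sqrt{1-t} \ge 1-t$ and $\sqrt{1+t} \le 1 + t/2$, plus the observation that the lower bound is vacuous when $Kr^2 \ge 1$).

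Property (2) then follows from this pointwise length bound combined with convexity. The upper bound is obtained by integrating along the straight Euclidean segment from $x$ to $y$, which lies in $B_{\R^d}(0, r)$ by convexity of Euclidean balls, yielding a curve of $\tilde g$-length at most $(1 + Kr^2)\,\de(x,y)$. For the lower bound, Whitehead's convexity theorem, which applies precisely under the radius condition $r \le \min(r_\text{inj}(M)/2, \pi/(2\sqrt{K}))$, guarantees that $B_M(x_0, r)$ is geodesically convex, so the minimizing $M$-geodesic between $\varphi^{-1}(x)$ and $\varphi^{-1}(y)$ stays in $B_M(x_0, r)$; pushing it forward through $\varphi$ yields a curve in $B_{\R^d}(0, r)$ of Euclidean length at least $\de(x,y)$, and the pointwise bound then gives $\d_M(\varphi^{-1}(x), \varphi^{-1}(y)) \ge (1 - Kr^2)\,\de(x,y)$. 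Property (3) follows directly from the eigenvalue bound: the volume form in normal coordinates is $\sqrt{\det \tilde g_x}\,dx$, and eigenvalues in $[1 - Kr^2, 1 + Kr^2]$ give $\sqrt{\det \tilde g_x} \in [(1-Kr^2)^{d/2}, (1+Kr^2)^{d/2}]$, so integrating over $A$ yields the claim.

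The main technical step is the derivation of the Rauch Jacobi-field inequality above and the Taylor estimates that match the stated constants. The subtler conceptual point is the invocation of Whitehead's theorem in the lower bound of (2): without geodesic convexity of $B_M(x_0, r)$, the minimizing geodesic could exit the chart, and the naive fallback of bounding $\tilde g$ on the larger ball $B_M(x_0, 2r)$ would yield substantially weaker constants (involving $2r$ in place of $r$, which may even be non-positive). The radius condition in the lemma is chosen precisely so that Whitehead's theorem is applicable.
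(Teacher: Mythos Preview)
Your proof is correct and follows essentially the same route as the paper's: Gauss's lemma for (1), the Rauch/Jacobi comparison to bound the pulled-back metric by $(s_{\pm K}(r)/r)^2$ times the Euclidean one, Whitehead's convexity theorem (under exactly the stated radius condition) so that the minimizing $M$-geodesic stays in the chart for the lower bound in (2), a Taylor expansion of $\sin$ and $\sinh$ to reach the constants $1\pm Kr^2$, and the determinant bound on the volume form for (3). The paper packages the Rauch step as a separate comparison theorem (its Theorem~\ref{thm:metric_comparison}) and cites Chavel for convexity, but the substance is identical, including your observation that the radius hypothesis is tailored precisely to make Whitehead's theorem applicable.
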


Now, we state the main result of this section.

\begin{proposition}\label{prop:comparison}
    Assume that all sectional curvatures of \(M\) lie in \([-K,K]\) for some \(K>0\) and that \(r_\text{inj}(M) > 0\).
    Then for all $\delta>0$, there exists an $r_0=r_0(\delta)$ such that for all $r\geq r_0$ there exists a $\lambda_0 = \lambda_0(r,\delta)$ such that for all $\lambda\geq \lambda_0$ and $x_0\in M$ we can find a coupling \((\xi, \eta)\) of Poisson point processes
    \begin{equation}\label{eq:coupling_marginals}
        \xi\sim \mathrm{PPP}(\lambda\cdot dx, \R^d) \quad \text{and} \quad \eta \sim \mathrm{PPP}(\lambda\cdot \mu_M, M),
    \end{equation}
    and a chart \(\varphi \in \pazocal{E} (x_0,r\lambda^{-1/d})\) so that the following holds with probability at least \(1-\delta\):
    \begin{enumerate}[(1)]
        \item \label{property 1 coupling} $ \pazocal G_{\R^d}(\xi)|_S = \varphi_*\pazocal G_{M}(\eta), \quad \text{where } S=B_{\R^d}\big(0, r\lambda^{-1/d}\big)$,
        \item \label{property 2 coupling} if a cell \(C(x;\xi)\), \(x\in\xi\), intersects \(S\), then \(\diam_{\R^d}(C(x;\xi)) \leq r\lambda^{-1/d}/100\),
        \item \label{property 3 coupling} if a cell \(C(x;\eta)\), \(x\in\eta\), intersects \(\varphi^{-1}(S) = B_M\big(x_0,r\lambda^{-1/d}\big)\), then \(\diam_{M}(C(x;\eta)) \leq r\lambda^{-1/d}/100\).
    \end{enumerate}
\end{proposition}

The rest of this subsection is dedicated to proving Proposition~\ref{prop:comparison}.
We start by coupling the point processes $\xi$ and $\eta$ so that they locally agree with high probability.

\begin{lemma}
\label{lem:pointsequal}
    Assume the conditions of Proposition \ref{prop:comparison} hold.
    Let $\delta, r>0$, then there exists a \(\lambda_0 = \lambda_0(\delta,r)\) such that for all $\lambda \geq \lambda_0$ and all \(x_0 \in M\) there exists a coupling \((\xi, \eta)\) of Poisson point processes
    \[\xi\sim \mathrm{PPP}(\lambda\cdot dx, \R^d) \quad \text{and} \quad \eta \sim \mathrm{PPP}(\lambda\cdot \mu_M, M),\]
    and a chart \(\varphi \in \pazocal{E} (x_0,r\lambda^{-1/d})\) so that
    \begin{equation}
    \P\big( \varphi(\eta) = \xi \cap B_{\R^d}(0, r\lambda^{-1/d})\big)\geq 1-\delta. 
    \end{equation}
\end{lemma}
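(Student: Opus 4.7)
The plan is to combine Lemma~\ref{lem:normal}(3), which quantifies the distortion of volume by normal coordinates, with a standard overlap coupling of Poisson processes. Take $\lambda$ large enough that $r\lambda^{-1/d}$ satisfies \eqref{eq:radius_condition}, and pick any chart $\varphi \in \pazocal{E}(x_0, r\lambda^{-1/d})$. Write $B \coloneqq B_{\R^d}(0, r\lambda^{-1/d})$. By Lemma~\ref{lem:normal}(3), the pushforward measure $\varphi_* \mu_M$ on $B$ admits a Radon--Nikodym derivative $\rho$ with respect to Lebesgue measure satisfying $|\rho - 1| \leq \theta_\lambda$ pointwise, where
\[\theta_\lambda \coloneqq \max\{(1+Kr^2\lambda^{-2/d})^{d/2} - 1,\; 1 - (1-Kr^2\lambda^{-2/d})^{d/2}\} = O(\lambda^{-2/d}).\]
In particular, the total mass of the signed measure $(\lambda\, dx - \lambda\, d\varphi_*\mu_M)|_B$ is bounded by $\lambda \int_B |1-\rho|\, dx \leq c_d r^d\, \theta_\lambda$, which tends to $0$ uniformly in $x_0$ as $\lambda \to \infty$ (here $c_d$ denotes the volume of the $d$-dimensional Euclidean unit ball).

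Next, I would perform the overlap coupling. Set $\nu_1 \coloneqq \lambda\, dx|_B$, $\nu_2 \coloneqq \lambda\, \varphi_*\mu_M|_B$, and let $\nu_\wedge$ be the measure on $B$ with density $\lambda \min(1, \rho)$ with respect to $dx$. Take independent Poisson processes $\zeta_0 \sim \PPP(\nu_\wedge)$ and $\zeta_i \sim \PPP(\nu_i - \nu_\wedge)$ for $i \in \{1,2\}$, and set $\wt\xi_i \coloneqq \zeta_0 \cup \zeta_i$. By the superposition theorem, $\wt\xi_i \sim \PPP(\nu_i)$, and
\[\P(\wt\xi_1 = \wt\xi_2) \geq \P(\zeta_1 = \zeta_2 = \emptyset) = \exp\!\Big(-\lambda \int_B |1-\rho|\, dx\Big) \geq \exp(-c_d r^d \theta_\lambda),\]
which exceeds $1 - \delta$ for all $\lambda \geq \lambda_0(\delta, r)$, independently of $x_0$.

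Finally, I would extend the coupling to the full spaces. Define $\eta \coloneqq \varphi^{-1}(\wt\xi_2) \cup \eta'$, where $\eta' \sim \PPP(\lambda \mu_M|_{M \setminus B_M(x_0, r\lambda^{-1/d})})$ is independent of the rest. Since $\varphi^{-1}(B) = B_M(x_0, r\lambda^{-1/d})$ by Lemma~\ref{lem:normal}(1), and the pushforward of $\nu_2$ by $\varphi^{-1}$ is $\lambda \mu_M|_{B_M(x_0, r\lambda^{-1/d})}$, the mapping theorem yields $\eta \sim \PPP(\lambda \mu_M)$. Analogously set $\xi \coloneqq \wt\xi_1 \cup \xi'$ with $\xi' \sim \PPP(\lambda\, dx|_{\R^d \setminus B})$ independent, so that $\xi \sim \PPP(\lambda\, dx)$. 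By construction $\varphi(\eta) = \wt\xi_2$ and $\xi \cap B = \wt\xi_1$, hence $\{\varphi(\eta) = \xi \cap B\} \supseteq \{\wt\xi_1 = \wt\xi_2\}$ has probability at least $1-\delta$. The argument reduces to routine Poisson bookkeeping once Lemma~\ref{lem:normal} is available; all geometric input is packaged into that lemma, so no substantial obstacle remains.
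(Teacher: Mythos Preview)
Your proof is correct and follows essentially the same approach as the paper: both invoke Lemma~\ref{lem:normal}(3) to bound the Radon--Nikodym density of $\varphi_*\mu_M$ with respect to Lebesgue measure, construct the maximal overlap coupling (your $\zeta_0,\zeta_1,\zeta_2$ are exactly the paper's $\tau_0,\tau_2,\tau_1$ with density $\lambda(f\wedge 1)$, $\lambda(1-f)^+$, $\lambda(f-1)^+$), and then extend independently outside the ball. The only cosmetic difference is that you track the distortion via $\theta_\lambda = O(\lambda^{-2/d})$ explicitly, while the paper absorbs it into a constant $c=c(r,\delta)$ chosen at the end.
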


\begin{proof}
Let \(\lambda_0\) be chosen so that for \(\lambda \geq \lambda_0\) the radius \(r_\lambda \coloneqq r \lambda^{-1/d}\) satisfies \eqref{eq:radius_condition}.
For \(\lambda \geq \lambda_0\) and \(x_0 \in M\) choose a chart \(\varphi \in \pazocal{E}(x_0,r\lambda)\), then Lemma \ref{lem:normal} yields that \(\varphi_*\mu_M\) has a Radon--Nikodym density \(f\) on \(B_{\R^d}(0,r_\lambda)\) that satisfies \( (1-Kr_\lambda^2)^{d/2} \leq f \leq (1+Kr_\lambda^2)^{d/2} \).
Assume that \(\lambda_0\) is chosen large enough to ensure that \(1-c \leq f \leq 1+c\) where \(c=c(r,\delta)\) is a constant that will be specified later.
Let
\begin{align*}
    \tau_0 &\sim \PPP(B_{\R^d}(0,r_\lambda), \lambda (f \wedge 1) \cdot dx), & \tau_3 &\sim \PPP(M \setminus B_{M}(x_0,r_\lambda), \lambda \cdot \mu_M)\\
    \tau_1 &\sim \PPP(B_{\R^d}(0,r_\lambda), \lambda (f - 1)^+ \cdot dx), & \tau_4 &\sim \PPP(\R^d \setminus B_{\R^d}(0,r_\lambda), \lambda \cdot dx),\\
    \tau_2 &\sim \PPP(B_{\R^d}(0,r_\lambda), \lambda (1 - f)^+ \cdot dx),&&
\end{align*}
be independent Poisson processes, where we write \(a \wedge b \coloneqq \min\{a,b\}\) and \(a^+ \coloneqq \max\{a,0\}\) for \(a,b \in \R\).
By the superposition theorem and the mapping theorem for Poisson processes (see, e.g., \cite[Theorem 3.3 and Theorem 5.1]{LastPenrosePP}), it holds that
\[\eta \coloneqq \varphi^{-1}(\tau_0 + \tau_1) + \tau_3 \sim \PPP(M, \lambda \cdot \mu_M),\quad
    \xi \coloneqq \tau_0 + \tau_2 + \tau_4 \sim \PPP(\R^d, \lambda \cdot dx). \]
Now observe that
\begin{align}
    \P\big( \varphi(\eta) = \xi \cap B_{\R^d}(0, r_\lambda)\big)
    &=\P\big(\tau_0 + \tau_1 = \tau_0 + \tau_2 \big)\\
    &\geq \P\big( \tau_1 \equiv 0 \big) \cdot \P\big( \tau_2 \equiv 0 \big)\\
    &=\exp\bigg(-\int_{B_{\R^d}(0, r_\lambda)} \lambda (f-1)^+\, dx -\int_{B_{\R^d}(0, r_\lambda)} \lambda (1-f)^+\, dx\bigg)\\
    &=\exp\bigg(-\int_{B_{\R^d}(0, r_\lambda)} \lambda|f-1|\, dx\bigg)\\
    &\geq\exp\big(-c \lambda |B_{\R^d}(0, r_\lambda)|\big)\\
    &=\exp\big(-c |B_{\R^d}(0, r)|\big)\\
    &= 1-\delta,
\end{align}
for \(c = -\log(1-\delta) / |B_{\R^d}(0, r)|\).
\end{proof}

Next, we show that the points of a Poisson point process in \(\R^d\) are dense in some sense.

\begin{lemma}\label{lem:density}
Let $\delta, C>0$. There exists $r_0 = r_0(\delta,C)>0$ such that for $r\geq r_0$, $\lambda>0$ and $\xi\sim \mathrm{PPP}\big(\lambda dx, \R^d\big)$ we have
\begin{equation}
\P \bigg(\de(x,\xi)\leq r\lambda^{-1/d}/C \; \text{ for all } x\in B_{\R^d}\big(0, r\lambda^{-1/d}\big) \bigg) \geq 1-\delta.
\end{equation}
\end{lemma}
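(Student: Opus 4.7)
The plan is to use scaling invariance of the Poisson point process to reduce to the case $\lambda=1$, then cover the ball $B_{\R^d}(0,r)$ by a finite net whose cardinality does not depend on $r$, and finally apply a union bound.

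First I would note that $x\mapsto \lambda^{1/d}x$ maps $\xi$ to a Poisson process $\xi'\sim\mathrm{PPP}(dx,\R^d)$ and transforms the event in question into
\[
\{\de(y,\xi')\leq r/C \text{ for all } y\in B_{\R^d}(0,r)\}.
\]
So it suffices to prove a uniform lower bound on the probability of this event for a unit-intensity Poisson process and $r$ sufficiently large. Set $\rho=r/(2C)$ and choose a $\rho$-net $\{x_1,\ldots,x_N\}\subset B_{\R^d}(0,r)$, i.e.~a finite set such that $B_{\R^d}(0,r)\subset\bigcup_i B_{\R^d}(x_i,\rho)$. A standard volume-packing argument gives such a net with
\[
N\leq \kappa_d \bigl(r/\rho\bigr)^d = \kappa_d(2C)^d,
\]
a constant depending only on $d$ and $C$ (since $r$ and $\rho$ scale proportionally). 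If each ball $B_{\R^d}(x_i,\rho)$ contains at least one point of $\xi'$, then any $y\in B_{\R^d}(0,r)$ lies within $\rho$ of some $x_i$ and hence within $2\rho=r/C$ of $\xi'$.

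Letting $v_d$ denote the Lebesgue volume of the Euclidean unit ball, a union bound gives
\[
\P\big(\exists i : \xi'\cap B_{\R^d}(x_i,\rho)=\emptyset\big)\leq N e^{-v_d\rho^d}=\kappa_d(2C)^d\exp\!\bigl(-v_d(r/(2C))^d\bigr),
\]
which tends to $0$ as $r\to\infty$. Choosing $r_0=r_0(\delta,C)$ large enough that the right-hand side is at most $\delta$ for all $r\geq r_0$ completes the proof. I do not anticipate any serious obstacle; the only subtlety is ensuring that the net cardinality $N$ is independent of $r$, which follows from the joint scaling of $r$ and $\rho$.
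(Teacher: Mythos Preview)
Your proof is correct and follows essentially the same approach as the paper: reduce to $\lambda=1$ by scaling, cover $B_{\R^d}(0,r)$ by $O(C^d)$ balls of radius $r/(2C)$ whose number is independent of $r$, and apply a union bound over the void probabilities. The paper's argument is identical in structure and detail.
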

\begin{proof} Fix $\delta,C>0$.
We further assume that \(\lambda=1\), since the claim for general \(\lambda\) follows by rescaling.
Since a ball of radius $r$ has Euclidean volume $\asymp r^d$, we can cover $B_{\R^d}(0,r)$ with roughly $C^d$ balls of radius $r/(2C)$.
More precisely, we can find $x_1,\ldots, x_n\in B_{\R^d}(0,r)$ such that
\[B_{\R^d}(0,r)\subset \bigcup_{i=1}^n B_{\R^d}\big(x_i,r/(2C)\big) \quad \text{and} \quad n\lesssim C^d.\]
(To see this, choose a maximal set of points in \(B_{\R^d}(0,r)\) at distance \(r/(2C)\) and argue as in Lemma \ref{lem:bounded_degree}.)
Next, we note that under the event that each of the balls in the union above contains a point of $\xi$, the event
\[ \big\{d_{\R^d}(x,\xi)\leq r / C \; \text{ for all } x\in B_{\R^d}(0,r)\} \]
occurs.
By the union bound, the complement of this event has probability at most
\[n\cdot \exp\bigg(-|B_{\R^d}(0,r/(2C)\big)|\lesssim C^d\cdot \exp\bigg( -r^d/(2C)^d\bigg),\]
which can be made smaller than $ \delta$ when $r$ is sufficiently large compared to $C$.
\end{proof}

We now introduce the notion of \emph{robustness}, which will be instrumental in the proof of Proposition \ref{prop:comparison}.

\begin{definition}
    Let \(r, \rho > 0\).
    We say that a locally finite point set \(\xi\subset\R^d\) is \emph{\((r,\rho)\)-robust}, when the following holds for all \(x,y \in \xi \cap B_{\R^d}(0,r)\):
    \begin{enumerate}
        \item If the (Euclidean) Voronoi cells of \(x\) and \(y\) intersect in \(B_{\R^d}(0,2r)\), then there exists a \(w_{xy} \in B_{\R^d}(0,2r)\) such that
        \begin{equation*}
            \de(w_{xy},x) = \de(w_{xy}, y) < \de(w_{xy}, \xi\setminus\{x,y\}) - \rho \cdot r.
        \end{equation*}
        \item If the (Euclidean) Voronoi cells of \(x\) and \(y\) do not intersect then
        \begin{equation*}
            \max\{ \de(w,x), \de(w,y) \} > \de(w, \xi) + \rho \cdot r
        \end{equation*}
        holds for all \(w \in B_{\R^d}(0,2r)\).
    \end{enumerate}
\end{definition}

\begin{remark}
    In the previous definition, the point \(w_{xy}\) acts as a \emph{witness} for the fact that \(x\) and \(y\) are neighbors.
    The adjacency relation is further immune to small perturbations (hence the name ``robustness''), since \(w_{xy}\) is guaranteed to be at least a fixed distance \(\rho\cdot r\) closer to \(x\) and \(y\) than to any other point of \(\xi\).
    Similarly, the second part of the definition ensures that non-neighbors fail to be neighbors in a quantifiable way. 
    These two properties will later allow us to replace \(\de\) by \(\varphi_*\dm\) without changing the adjacency relations.

    Bollobás and Riordan use a similar notion of robustness in their proof of \(p_c(\R^2)=1/2\), cf.~\cite[Theorem 6.1]{BR_Voronoi_percolation}.
\end{remark}

\begin{lemma}
\label{lem:robustness}
    Let \(r>0\) and \(\xi \sim \PPP(dx, \R^d)\).
    Then almost surely there is a \(\rho \in (0,1)\) such that \(\xi\) is \((r,\rho)\)-robust.

\end{lemma}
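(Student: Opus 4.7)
The plan is to argue pointwise on realizations of $\xi$: since $\xi\cap B_{\R^d}(0,r)$ is almost surely finite, only finitely many pairs need to be checked, and $\rho$ can then be taken as $r^{-1}\min_{\{x,y\}}\delta_{xy}$ (truncated below $1$) for suitable pair-specific margins $\delta_{xy}>0$. The proof thus reduces to producing such a $\delta_{xy}$ for each relevant pair by case analysis.

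First I would restrict to an almost-sure event on which (a) $\xi$ is locally finite and (b) $\xi$ is in \emph{general position}, meaning no $d+2$ points of $\xi$ lie on a common sphere. Property (b) holds with probability one because, by absolute continuity of the Lebesgue intensity, cosphericality of any fixed ordered $(d+2)$-tuple of Poisson points has probability zero, and a countable union over tuples drawn from $\xi\cap B_{\R^d}(0,n)$, $n\in\N$, covers all cases. On this event the Voronoi diagram of $\xi$ has the standard ``simple'' combinatorial structure: two adjacent cells share a $(d-1)$-dimensional facet, three share a $(d-2)$-dimensional ridge, and so on.

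For each pair $\{x,y\}\subset\xi\cap B_{\R^d}(0,r)$ I consider three subcases. If $C(x;\xi)\cap C(y;\xi)=\emptyset$, the continuous function $g(w)\coloneqq\max\{\de(w,x),\de(w,y)\}-\de(w,\xi)$ is strictly positive on the compact set $\overline{B_{\R^d}(0,2r)}$ (indeed, $g(w_0)=0$ would force $w_0\in C(x;\xi)\cap C(y;\xi)$), so compactness yields $\delta_{xy}\coloneqq\min g>0$. If instead $C(x;\xi)\cap C(y;\xi)\cap B_{\R^d}(0,2r)\neq\emptyset$, general position guarantees that $F\coloneqq C(x;\xi)\cap C(y;\xi)$ is a $(d-1)$-dimensional convex polyhedron on the perpendicular bisector of $\{x,y\}$, so $F\cap B_{\R^d}(0,2r)$ is relatively open and $(d-1)$-dimensional; removing the lower-dimensional locus $\bigcup_{z\neq x,y}C(z;\xi)\cap F$ where a third cell also joins still leaves a nonempty set, from which I pick a witness $w_{xy}$. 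Local finiteness ensures only finitely many $z\in\xi$ satisfy $\de(w_{xy},z)\leq\de(w_{xy},x)+1$, and for each such $z$ the choice of $w_{xy}$ gives the strict inequality $\de(w_{xy},z)>\de(w_{xy},x)$; taking the minimum yields a positive $\delta_{xy}$. The third subcase --- cells meeting only outside $B_{\R^d}(0,2r)$ --- imposes no constraint. Setting $\rho\coloneqq\min\bigl\{1/2,\,r^{-1}\min_{\{x,y\}}\delta_{xy}\bigr\}$ then delivers the claim.

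The main obstacle is the case of intersecting cells: one must exhibit an interior witness $w_{xy}$ of the common facet that is strictly separated, with a uniform gap, from every other Voronoi generator. General position does the real work here, since without it three or more cells could share a codimension-one piece of boundary, forcing $\de(w,\xi\setminus\{x,y\})=\de(w,x)$ along the facet and collapsing $\delta_{xy}$ to zero. Once general position is in hand, continuity of $\de(\cdot,\xi)$, compactness of $\overline{B_{\R^d}(0,2r)}$, and the finiteness of both the pair count and of the Poisson points near $w_{xy}$ combine routinely to close the argument.
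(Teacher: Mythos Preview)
Your proposal is correct and follows essentially the same route as the paper. Both arguments split into the two cases of the robustness definition, handle non-adjacent pairs by taking the positive minimum of the continuous function $w\mapsto\max\{\de(w,x),\de(w,y)\}-\de(w,\xi)$ over the compact ball $\overline{B_{\R^d}(0,2r)}$, and handle adjacent pairs by locating a witness $w_{xy}$ in the relative interior of the common facet (and inside $B_{\R^d}(0,2r)$), then minimizing over the finitely many pairs. The only cosmetic difference is that the paper cites \cite{moller} for the fact that $C(x;\xi)\cap C(y;\xi)$ is almost surely a $(d-1)$-dimensional face, whereas you derive this from an explicit general-position argument; the substance is the same.
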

\begin{proof}

    Let \(x,y \in \xi \cap B_{\R^d}(0,r)\) be Voronoi neighbors. 
    Then there is some \(w \in B_{\R^d}(0,2r)\) such that
    \[ \de(w,x) = \de(w,y) \leq \de(w, \xi \setminus \{x,y\}). \]
     Note that the intersection \(C(x;\xi) \cap C(y;\xi)\) is almost surely a \((d-1)\)-dimensional face of \(C(x;\xi)\) and \(C(y;\xi)\) (see \cite[page 6]{moller}) whose relative interior
    \[ \{z \in \R^d \colon \de(z,x) = \de(z, y) < \de(z, \xi\setminus\{x,y\}) \} \]
    contains \(w\) in its closure. 
    Since \(w\in B_{\R^d}(0,2r)\) and \(B_{\R^d}(0,2r)\) is open, it follows that there is some \(w_{xy} \in B_{\R^d}(0,2r)\) and \(\rho^{(1)}_{xy}>0\) with the property that
    \[ \de(w_{xy},x) = \de(w_{xy}, y) < \de(w_{xy}, \xi\setminus\{x,y\}) - \rho^{(1)}_{xy}. \]
    We choose such \(w_{xy}\) and \(\rho^{(1)}_{xy}\) for all pairs of neighbors \(x,y \in B_{\R^d}(0,r)\) whose cells intersect in \(B_{\R^d}(0,2r)\) and let \(\rho^{(1)}\) be the minimum over all those
    (a.s.\ finitely many) pairs.
    It then holds that
    \begin{equation}
    \label{eq:proof_robust_1}
        \de(w_{xy},x) + \de(w_{xy}, y) < \de(w_{xy}, \xi\setminus\{x,y\}) - \rho^{(1)},
    \end{equation}
    for all such neighbors \(x,y\).

    Now assume that the cells of \(x,y \in \xi \cap B_{\R^d}(0,r)\) do not intersect at all.
    Then clearly
    \[ \max\{\de(w,x),\de(w,y)\} > \de(w,\xi) \]
    holds for all \(w\in \R^d\) and hence the continuous function
    \[ w \mapsto \max\{\de(w,x),\de(w,y)\} - \de(w,\xi) \]
    assumes its positive minimum \(\rho^{(2)}_{xy}\) on the compact set \(\overline{B_{\R^d}(0,2r)}\).
    Let \(\rho^{(2)} \coloneqq \frac{1}{2} \min \rho^{(2)}_{xy}\), where the minimum ranges over all pairs of distinct points \(x,y \in \xi \cap B_{\R^d}(0,r)\) that are not
    Voronoi neighbors.
    Then
    \begin{equation}
    \label{eq:proof_robust_2}
        \max\{ \de(w,x), \de(w,y) \} > \de(w, \xi) + \rho^{(2)}
    \end{equation}
    holds for all \(w \in B_{\R^d}(0,2r)\) and all distinct \(x,y \in \xi\cap B_{\R^d}(0,r)\) whose cells do not intersect.
    
    Taking \(\rho \coloneqq \min\{\rho^{(1)}/r,\rho^{(2)}/r\}\), it follows from \eqref{eq:proof_robust_1} and \eqref{eq:proof_robust_2} that \(\xi\) is \((r,\rho)\)-robust.
    Since this implies \((r,\rho')\)-robustness for \(0 < \rho' < \rho\), we can assume w.l.o.g.~that \(\rho \in (0,1)\).
\end{proof}

The following corollary roughly says that, with high probability, the Voronoi diagram is immune to perturbations of order \( \rho \cdot r \lambda^{-1/d}\) (for any intensity \(\lambda > 0\)) for some \(\rho>0\).
Since the difference of the metrics \(|\varphi_* \dm -\de|\) is of order \(O((r \lambda^{-1/d})^2)\) (as \(\lambda \to \infty\)) on \(B_{\R^d}(0,r\lambda^{-1/d})\) by Lemma \ref{lem:normal}, it will be comfortably below this tolerance for large \(\lambda\). 

\begin{corollary}
\label{cor:robustness}
    Let \(r, \delta > 0\).
    Then there exists a $\rho\in (0,1)$ such that for every $\lambda>0$,
    \[ \P\left(\text{\(\xi\) is \((r\lambda^{-1/d},\rho)\)-robust}\right) \geq 1-\delta, \]
    where \(\xi \sim \PPP(\lambda dx, \R^d)\).
\end{corollary}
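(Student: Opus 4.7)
The plan is to reduce the statement for general intensity $\lambda$ to the fixed-intensity case of Lemma~\ref{lem:robustness} via the standard scaling of Poisson point processes on $\R^d$, using that the notion of robustness is scale-covariant in a natural way.

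First I would verify the scaling invariance: for any $c>0$, a locally finite $\xi\subset\R^d$ is $(r,\rho)$-robust if and only if $c\xi$ is $(cr,\rho)$-robust. This follows immediately because Voronoi tessellations scale linearly, $C(cx;c\xi)=c\cdot C(x;\xi)$, so neighbors go to neighbors and non-neighbors to non-neighbors, and because for any candidate witness $w_{xy}$ the defining inequalities
\[\de(w_{xy},x)=\de(w_{xy},y)<\de(w_{xy},\xi\setminus\{x,y\})-\rho\cdot r\]
and
\[\max\{\de(w,x),\de(w,y)\}>\de(w,\xi)+\rho\cdot r\]
transform, upon sending $w\mapsto cw$, $x\mapsto cx$, $y\mapsto cy$, into the corresponding inequalities with $r$ replaced by $cr$, leaving $\rho$ unchanged.

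Next I would use the mapping theorem for Poisson processes: if $\xi\sim\PPP(\lambda\, dx,\R^d)$ then $\xi':=\lambda^{1/d}\xi\sim\PPP(dx,\R^d)$. Combining this with the preceding scaling observation applied to $c=\lambda^{1/d}$, we obtain that $\xi$ is $(r\lambda^{-1/d},\rho)$-robust if and only if $\xi'$ is $(r,\rho)$-robust. In particular,
\[\P\bigl(\xi\text{ is }(r\lambda^{-1/d},\rho)\text{-robust}\bigr)=\P\bigl(\xi'\text{ is }(r,\rho)\text{-robust}\bigr),\]
and the right-hand side no longer depends on $\lambda$.

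Finally, to produce a deterministic $\rho\in(0,1)$ that works, I would apply Lemma~\ref{lem:robustness} to $\xi'$ with radius $r$. This yields an a.s.\ positive (random) maximal robustness constant $\rho^*(\xi')$, so the events $\{\xi'\text{ is }(r,1/n)\text{-robust}\}$ increase (in $n$) to an almost sure event. By continuity of measure, we may fix $n=n(\delta,r)$ large enough so that the probability of this event exceeds $1-\delta$, and then set $\rho:=1/n\in(0,1)$. There is no real obstacle here; the only content beyond bookkeeping is the scale-covariance check in the first step, which is a direct computation from the definitions.
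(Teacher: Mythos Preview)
Your proof is correct and follows essentially the same approach as the paper: first establish the case $\lambda=1$ from Lemma~\ref{lem:robustness} via continuity of measure from below, then transfer to general $\lambda$ by the scaling covariance of robustness and the scaling of Poisson processes. The paper's proof is slightly terser but the logic is identical.
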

\begin{proof}
    For \(\lambda = 1\), the statement follows directly from Lemma~\ref{lem:robustness} and the fact that \(\P\) is continuous from below.
    For general \(\lambda > 0\), the claim then follows by a scaling argument:
    If \(\xi \sim \PPP(dx,\R^d)\), then \(\lambda^{-1/d} \xi \sim \PPP(\lambda dx,\R^d)\) and it further holds that \(\xi\) is \((r, \rho)\)-robust if and only if \(\lambda^{-1/d} \xi\) is \((r \lambda^{-1/d}, \rho)\)-robust.
\end{proof}

The following lemma contains some purely deterministic arguments that enter into the proof of Proposition \ref{prop:comparison}.

\begin{lemma}\label{lem:conditions}
    Let \(\xi \subset \R^d\) and \(\eta \subset M\) be locally finite deterministic point sets.
    Let \(\rho \in (0,1)\) and let \(r > 0\) so that \(8r\) satisfies \eqref{eq:radius_condition}.
    Let \(x_0 \in M\) and \(\varphi \in \pazocal{E}(x_0,8r)\).
    Then the following conditions imply that \(\varphi_*\pazocal{G}_{M}(\eta)|_{B_{\R^d}(0,r)} = \pazocal{G}_{\R^d}(\xi)|_{B_{\R^d}(0,r)}\).
    \begin{itemize}
        \item \emph{Equality (EQL):} \(\varphi(\eta) = \xi \cap B_{\R^d}(0,8r)\).
        \item \emph{Robustness (ROB):} \(\xi\) is \((r,2\rho)\)-robust.
        \item \emph{Small cells (SMA):} \(\diam_{\R^d}(C(x,\xi)) \leq r\) for all \(x \in \xi \cap B_{\R^d}(0,r)\) and
                \(\diam_{M}(C(x,\eta)) \leq r\) for all \(x \in \eta \cap B_{M}(x_0,r)\).
        \item \emph{Similarity of metrics (SIM):}   \(\de - \rho \cdot r \leq \varphi_*\d_M \leq \de + \rho \cdot r\) on \(B_{\R^d}(0,8r)\).
    \end{itemize}
\end{lemma}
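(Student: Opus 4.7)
The plan is to verify that the two graphs agree vertex-by-vertex and edge-by-edge. Vertices match immediately: using EQL together with the inclusion $B_{\R^d}(0,r)\subset B_{\R^d}(0,8r)$, both vertex sets equal $\xi\cap B_{\R^d}(0,r)$. For edges, I fix distinct $x,y\in\xi\cap B_{\R^d}(0,r)$ and aim to show the equivalence $C(x;\xi)\cap C(y;\xi)\neq\emptyset \iff C(\varphi^{-1}(x);\eta)\cap C(\varphi^{-1}(y);\eta)\neq\emptyset$.

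For the ``preservation of non-edges'' direction, I assume the Euclidean cells are disjoint and suppose for contradiction that the manifold cells share a point $w'\in M$. By SMA and the triangle inequality, $w'\in B_M(x_0,2r)\subset\dom(\varphi)$; writing $w'=\varphi^{-1}(w^*)$ with $w^*\in B_{\R^d}(0,2r)$ and setting $d\coloneqq\varphi_*\dm(w^*,x)=\varphi_*\dm(w^*,y)$, I will apply ROB(2) at $w^*$ to extract $z^*\in\xi$ with $\de(w^*,z^*)<\de(w^*,x)-2\rho r$. A crude distance bound places $z^*$ inside $B_{\R^d}(0,8r)$, so EQL gives $\varphi^{-1}(z^*)\in\eta$, and a second application of SIM produces $\varphi_*\dm(w^*,z^*)<d$, contradicting the minimality of $d$ among distances from $w'$ to $\eta$.

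For the ``preservation of edges'' direction, SMA places the Euclidean intersection inside $B_{\R^d}(0,2r)$, so ROB(1) supplies a witness $w_{xy}\in B_{\R^d}(0,2r)$ with $D\coloneqq\de(w_{xy},x)=\de(w_{xy},y)<\de(w_{xy},z)-2\rho r$ for every $z\in\xi\setminus\{x,y\}$. Combining SIM with EQL (for $z\in\xi\cap B_{\R^d}(0,8r)$) and the triangle inequality in $M$ (for $\eta$-points outside $B_M(x_0,8r)$, which $D\leq 4r$ forces to be far enough), I deduce that $\varphi^{-1}(w_{xy})$ is strictly closer in $M$ to $\varphi^{-1}(x)$ and $\varphi^{-1}(y)$ than to any other point of $\eta$, with a positive margin $\epsilon>0$. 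This margin cuts out an open chart ball $U\subset B_{\R^d}(0,2r)$ around $w_{xy}$ on which the strict separation persists; on $\varphi^{-1}(U)$, membership in $C(\varphi^{-1}(x);\eta)\cup C(\varphi^{-1}(y);\eta)$ is governed purely by the sign of $g(w)\coloneqq\dm(w,\varphi^{-1}(x))-\dm(w,\varphi^{-1}(y))$, so any zero of $g$ on $\varphi^{-1}(U)$ lies in $C(\varphi^{-1}(x);\eta)\cap C(\varphi^{-1}(y);\eta)$.

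To produce such a zero, I will apply the intermediate value theorem to $g$ along the pullback of the Euclidean line $\sigma(s)=w_{xy}+s(y-x)/|y-x|$, exploiting the identity $\de(\sigma(s),x)^2-\de(\sigma(s),y)^2=2s|y-x|$ together with SIM to transfer sign information from $\R^d$ to $M$. The main obstacle I anticipate is ensuring the sign change actually occurs inside $\varphi^{-1}(U)$: this requires a delicate balance of the $2\rho r$ slack in ROB against the $\rho r$ slack in SIM, leveraging the strict inequality in ROB despite the lack of any uniform lower bound on $|y-x|$.
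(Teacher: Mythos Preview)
Your vertex check and the ``preservation of non-edges'' argument are correct and essentially match the paper. The gap lies exactly where you anticipate: the ``preservation of edges'' direction.

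After transferring ROB through SIM you correctly obtain that $\wt w_{xy}\coloneqq\varphi^{-1}(w_{xy})$ is strictly closer in $M$ to $\wt x,\wt y$ than to any other point of $\eta$, but the margin $\epsilon$ you extract is merely the \emph{excess} beyond $2\rho r$ in the strict inequality of ROB, and the definition of $(r,2\rho)$-robustness imposes no lower bound on that excess. Hence the ball $U$ can have arbitrarily small radius. Meanwhile, forcing a sign change of $g\circ\varphi^{-1}\circ\sigma$ via SIM requires the Euclidean difference $\de(\sigma(s),x)-\de(\sigma(s),y)$ to reach $\pm 2\rho r$; since this difference behaves like $s\,|y-x|/D$ near $s=0$, you must travel $|s|\sim 2\rho r\,D/|y-x|$, which blows up as $|y-x|\to 0$. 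Nothing ties the size of $U$ to this quantity, so the IVT cannot be localized to $\varphi^{-1}(U)$ in general, and the ``delicate balance'' you hope for is not available from the hypotheses.

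The paper bypasses this by abandoning the Euclidean line $\sigma$ and instead sliding along a \emph{minimizing $M$-geodesic} from $\wt w_{xy}$ toward (say) $\wt y$, assuming without loss of generality $\dm(\wt w_{xy},\wt x)\le\dm(\wt w_{xy},\wt y)$. The triangle inequality alone shows that every point $p$ on this geodesic satisfies
\[
\dm(p,\wt z)\ge\dm(\wt w_{xy},\wt z)-\dm(\wt w_{xy},p)>\dm(\wt w_{xy},\wt y)-\dm(\wt w_{xy},p)=\dm(p,\wt y)
\]
for each $\wt z\in\eta\setminus\{\wt x,\wt y\}$, so the \emph{entire} geodesic lies in the good region---no delicate localization is needed. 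Since $g$ runs from $\le 0$ at $\wt w_{xy}$ to $\dm(\wt y,\wt x)>0$ at $\wt y$, the IVT along the geodesic yields the required equidistant point, which by the displayed inequality lies in $C(\wt x;\eta)\cap C(\wt y;\eta)$.
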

\begin{proof}
    We use the convention \(\d(x,\emptyset) = \infty\).
    Let \(x,y \in \xi \cap B_{\R^d}(0,r)\) be neighbors in the Euclidean Voronoi diagram.
    Take some \(w\) that lies in the intersection of their Voronoi cells.
    By (SMA) it must hold that \(w \in B_{\R^d}(0,2r)\) and thus (ROB) guarantees the existence of a witness \(w_{xy}\in B_{\R^d}(0,2r)\) so that
    \[ \de(w_{xy},x) = \de(w_{xy},y) < \de(w_{xy},\xi\setminus\{x,y\}) - 2 \rho \cdot r.\]
    Using (EQL), we see that
    \[ \de(w_{xy},\xi\setminus\{x,y\}) \leq \de(w_{xy},\xi\cap B_{\R^d}(0,8r)\setminus\{x,y\}) = \de(w_{xy},\varphi(\eta)\setminus\{x,y\}), \]
    so that
    \[ \de(w_{xy},x) = \de(w_{xy},y) < \de(w_{xy},\varphi(\eta)\setminus\{x,y\}) - 2 \rho \cdot r.\]
    Using (SIM), we now get
    \[ \varphi_*\d_M(w_{xy},x), \varphi_*\d_M(w_{xy},y) <  \varphi_*\d_M(w_{xy},\varphi(\eta)\setminus\{x,y\}). \]
    Let \(\wt x, \wt y, {\wt w}_{xy}\) denote the preimages of \(x,y,w_{xy}\) under \(\varphi\) respectively, then the previous inequality reads
    \[\d_M({\wt w}_{xy},\wt x), \d_M({\wt w}_{xy},\wt y) <  \d_M({\wt w}_{xy}, \eta\cap\dom(\varphi)\setminus\{\wt x,\wt y\}). \]
    Since \(\wt x, \wt y, {\wt w}_{xy}\in B_{M}(x_0,2r)\),
    it follows that \(\d_M({\wt w}_{xy},\wt x), \d_M({\wt w}_{xy},\wt y) < 4r\).
    On the other hand, \(\d_M({\wt w}_{xy},\wt z) \geq 4r\) for any \(\wt z \in M \setminus \dom(\varphi) = M \setminus B_M(x_0,8r)\).
    It thus follows that
    \[ \d_M({\wt w}_{xy},\wt x), \d_M({\wt w}_{xy},\wt y) < \d_M({\wt w}_{xy}, \eta\setminus\{\wt x,\wt y\}). \]
    One then finds a point \({\wt w}'_{xy}\in M\) so that
    \[ \d_M({\wt w}'_{xy},\wt x) = \d_M({\wt w}'_{xy},\wt y) <  \d_M({\wt w}'_{xy},\eta\setminus\{\wt x,\wt y\}).\]
    (Assume w.l.o.g.\ that \(\d_M(\wt w_{xy},\wt x) \leq \d_M(\wt w_{xy}, \wt y)\).
    Take any minimizing geodesic from \(\wt w_{xy}\) to \(\wt y\) and let \(\wt w'_{xy}\) be the point on this curve that is 
    equidistant from \(\wt x\) and \(\wt y\).
    It follows from the triangle inequality that this point is closer to \(\wt x\) and \(\wt y\) than to any other point of \(\eta\).)
    Hence, \(\wt x\) and \(\wt y\) are neighbors in \(\pazocal G_{M}(\eta)\).
    
    Conversely, assume that there exist \(\wt x, \wt y \in \eta \cap B_{M}(x_0,r)\) which are neighbors in \(\pazocal G_{M}(\eta)\)
    so that \(x = \varphi(\wt x)\) and \(y = \varphi(\wt y)\) are not neighbors in \(\pazocal G_{\R^d}(\xi)\).
    By (SMA), there must be a witness \(\wt w_{xy} \in B_{M}(x_0,2r)\) so that
    \[ \d_M(\wt w_{xy},\wt x) = \d_M(\wt w_{xy},\wt y) \leq \d_M(\wt w_{xy}, \eta) \leq \d_M(\wt w_{xy},\eta\cap B_M(x_0,8r)). \]
    Writing \(w_{xy} = \varphi(\wt w_{xy})\) and using (EQL), it follows that
    \[ \varphi_*\d_M(w_{xy}, x) = \varphi_*\d_M(w_{xy},y) \leq \varphi_*\d_M(w_{xy}, \xi \cap B_{\R^d}(0,8r)). \]
    Applying (SIM) now yields
    \[ \de(w_{xy},x), \de(w_{xy},y) \leq \de(w_{xy},\xi\cap B_{\R^d}(0,8r)) + 2 \rho \cdot r. \]
    Since \(w_{xy},x,y \in B_{\R^d}(0,2r)\), we can argue the same way we did earlier to obtain
    \[ \de(w_{xy},x), \de(w_{xy},y) \leq \de(w_{xy},\xi) + 2 \rho \cdot r, \]
    which directly contradicts (ROB).
\end{proof}

\begin{proof}[Proof of Proposition \ref{prop:comparison}]
Fix $\delta>0$.
First, using Lemma~\ref{lem:density} (with $C=8000$) choose $r_0$ such that for all $r\geq r_0$ and $\lambda>0$ we have
\begin{equation}\label{eq:dens}
\P \bigg(\de(x,\xi)\leq r\lambda^{-1/d}/1000 \; \text{ for all } x\in B_{\R^d}\left(0, 8r\lambda^{-1/d}\right) \bigg) \geq 1-\delta/3
\end{equation}
for \(\xi \sim \PPP(\lambda dx, \R^d)\).
Now fix $r\geq r_0$.
Corollary \ref{cor:robustness} yields a \(\rho \in (0,1)\) so that for all \(\lambda > 0\) it holds that
\begin{equation}\label{eq:robustness}
  \P\left(\xi \text{ is $( r\lambda^{-1/d},2\rho)$-robust} \right)\geq 1-\delta/3
\end{equation}
for  \(\xi \sim \PPP(\lambda dx, \R^d)\).
Using Lemma \ref{lem:pointsequal}, we obtain a \(\lambda_0 > 0\)
such that for all $\lambda \geq \lambda_0$ and all \(x_0 \in M\) there exists a coupling \((\xi, \eta)\) of Poisson point processes
    \[\xi\sim \mathrm{PPP}(\lambda\cdot dx, \R^d) \quad \text{and} \quad \eta \sim \mathrm{PPP}(\lambda\cdot \mu_M, M),\]
and a chart \(\wt \varphi \in \pazocal{E} (x_0,8r\lambda^{-1/d})\) so that
\begin{equation}\label{eq:equality}
\P\big( \wt\varphi(\eta) = \xi \cap B_{\R^d}(0, 8r\lambda^{-1/d})\big)\geq 1-\delta/3. 
\end{equation}
Without loss of generality, we may assume that \(\lambda_0\) is large enough so that Lemma \ref{lem:normal} guarantees
\begin{equation}\label{eq:length_similar}
    |\wt\varphi_*\d_M - \de| \leq r \lambda^{-1/d} (\rho \wedge 10^{-3}) \quad \text{on} \quad B_{\R^d}(0,8r\lambda^{-1/d})
\end{equation}
for \(\lambda\geq \lambda_0\) and \(\wt\varphi\) as above. 

Now fix \(\lambda \geq \lambda_0\) and take \(\eta, \xi\) and \(\wt\varphi\) as above.
We show that under the intersection of the events in \eqref{eq:dens}, \eqref{eq:robustness} and \eqref{eq:equality} the conclusions in Proposition \ref{prop:comparison} hold for \(
\varphi \coloneqq \wt\varphi|_{B_M(x_0,r\lambda^{-1/d})}\).
Since by the union bound this intersection has probability at least \(1-\delta\), this concludes the proof.

If those three events occur, then it follows using \eqref{eq:length_similar} that \(\xi\) and \(\eta\) are sufficiently dense (w.r.t.~\(\de\) and \(\dm\) respectively) in \(B_{\R^d}(0,8r\lambda^{-1/d})\) (respectively \(B_M(x_0,8r\lambda^{-1/d})\)) as to guarantee the second and third conclusions in Proposition~\ref{prop:comparison}, as well as the condition (SMA) in Lemma~\ref{lem:conditions}.
The remaining conditions (EQL), (ROB) and (SIM) are satisfied as well and thus Lemma~\ref{lem:conditions} (applied with \(\wt\varphi\)) implies the first conclusion in Proposition~\ref{prop:comparison}.

\end{proof}

\section{Fine-graining}
\label{sec:fine_graining}

In this section, we prove Theorem~\ref{thm:main}. The proof consists of two fine-graining arguments, a subcritical and a supercritical one, on the embedded graphs $G_\ep$ defined in Section~\ref{sec:discretization}. These arguments are presented in separate subsections.

We first introduce some notation. We write $\Omega_{M}$ for the set of all locally finite subsets of $M$.
We view a pair $\overline{\omega}=(\omega_w,\omega_b)\in \Omega^2_{M}$ as a percolation configuration where the points in $\omega_w$ are white and the points in $\omega_b$ are black. Given $\overline{\omega}\in \Omega^2_{M}$, we denote by $\pazocal{V}_{M}(\overline{\omega})\subset M$ the white region, that is, the union of the cells of $\omega_w$ in the Voronoi tessellation corresponding to $\omega\coloneqq \omega_w\cup\omega_b$. In particular, it holds for $\overline{\eta}(\lambda,p) \coloneqq (\eta_w(\lambda,p),\eta_b(\lambda,p))$ where $\eta_w(\lambda,p)$ and $\eta_b(\lambda,p)$ are Poisson point processes on $M$ with intensities $p\lambda\mu_M$ and $(1-p)\lambda\mu_M$, respectively, that $\pazocal{V}_{M}(\overline{\eta}(\lambda,p))$ is distributed as $\pazocal{V}_{M}(\lambda,p)$, as defined in the introduction.

\subsection{Subcritical fine-graining}
\label{sec:subcritical}

In this section, we prove the subcritical part of Theorem~\ref{thm:main}, which is encapsulated in the following proposition. In order to prove that a certain ``good local event'' occurs with high probability, we combine the \emph{subcritical} sharpness for Voronoi percolation on $\R^d$ proved in \cite{DRT17} together with the local comparison established in Proposition~\ref{prop:comparison}.

\begin{proposition}\label{prop:subcritical}
   $\liminf_{\lambda\rightarrow \infty} p_c(M,\lambda)\geq p_c(\R^d).$
\end{proposition}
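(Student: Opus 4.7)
The plan is a fine-graining (renormalization) argument on the discretization graphs $G_\ep$, combining three ingredients already at our disposal: the Euclidean subcritical sharpness of \cite{DRT17}, the local comparison from Proposition~\ref{prop:comparison}, and the uniform lower bound $p_c(G_\ep)\geq \delta_0$ from Theorem~\ref{thm:discretization_percolation}. Fix $p<p_c(\R^d)$; the goal is to show that for all $\lambda$ large enough there is almost surely no infinite white cluster in $\pazocal{V}_{M}(\lambda,p)$, so that $p_c(M,\lambda)\geq p$, and then let $p\uparrow p_c(\R^d)$.

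Fix some $\ep_0>0$. By Theorem~\ref{thm:discretization_percolation} and Lemma~\ref{lem: bound on Delta}, uniformly in $\ep\in(0,\ep_0]$ we have $p_c(G_\ep)\geq \delta_0$ and the maximum degree of $G_\ep$ is bounded by some constant $D$. For any fixed integer $k\geq 1$, the Liggett--Schonmann--Stacey domination theorem then yields a threshold $\delta_1>0$, depending only on $\delta_0$, $D$ and $k$, such that any $k$-dependent site percolation on $G_\ep$ with marginals $\leq\delta_1$ is stochastically dominated by an i.i.d.\ Bernoulli$(\delta_0/2)$ percolation, which a.s.\ has no infinite cluster. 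On the Euclidean side, the subcritical sharpness of \cite{DRT17} at parameter $p$ supplies some $r>0$ such that the probability of a white crossing from $B_{\R^d}(0,2r)$ to $\partial B_{\R^d}(0,5r)$ in intensity-$1$ Voronoi percolation is at most $\delta_1/2$.

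Set $\ep\coloneqq r\lambda^{-1/d}$ and, for each $x\in V_\ep$, introduce the local ``good'' event
\[ F_\ep^{\mathrm{sub}}(x)\coloneqq \big\{\text{no white path in }\pazocal{V}_{M}(\lambda,p)\cap B_M(x,5\ep)\text{ from }B_M(x,2\ep)\text{ to }\partial B_M(x,5\ep)\big\}. \]
This event is measurable with respect to the restriction of the colored Poisson process to a slight enlargement of $B_M(x,5\ep)$, so by Lemma~\ref{lem:quasi_isometry} the collection $\{F_\ep^{\mathrm{sub}}(x)^c\}_{x\in V_\ep}$ has range of dependence on $G_\ep$ bounded by a constant $k$ independent of $\lambda$. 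Applying Proposition~\ref{prop:comparison} at each $x\in V_\ep$ with a coupling radius of order $\ep$ and coupling error at most $\delta_1/2$, together with the scaling invariance of Euclidean Voronoi percolation, transfers the Euclidean bound of the previous paragraph to $M$, giving $\P(F_\ep^{\mathrm{sub}}(x)^c)\leq \delta_1$ for every $x\in V_\ep$, provided $\lambda$ is large enough (one also ensures $\ep\leq \ep_0$, which is automatic since $\ep\to 0$).

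Assume now for contradiction that $\pazocal{V}_{M}(\lambda,p)$ contains an infinite white cluster $C$. By Lemma~\ref{lem:connected_sets} applied to the connected set $C$ and the open cover $(B_M(x,2\ep))_{x\in V_\ep}$, the set $S_C\coloneqq \{x\in V_\ep: B_M(x,2\ep)\cap C\neq\emptyset\}$ is infinite and connected in $G_\ep$. For each $x\in S_C$, the unboundedness of $C$ forces it to exit the compact set $\overline{B_M(x,5\ep)}$; taking a continuous path in $C$ from a point of $C\cap B_M(x,2\ep)$ to a point of $C$ outside $B_M(x,5\ep)$ and stopping it at the first crossing of $\partial B_M(x,5\ep)$ produces a white path witnessing $F_\ep^{\mathrm{sub}}(x)^c$. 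Thus $S_C$ is an infinite connected cluster of ``bad'' vertices in $G_\ep$, contradicting the LSS domination. The main technical issue is the calibration of scales: $F_\ep^{\mathrm{sub}}(x)$ must simultaneously be (a) local enough that Proposition~\ref{prop:comparison} transfers it from $M$ to $\R^d$, (b) of bounded range of dependence on $G_\ep$, and (c) violated whenever an infinite cluster passes through $B_M(x,2\ep)$. The small-cell guarantee in Proposition~\ref{prop:comparison} is precisely what allows a single choice of scales to meet all three requirements.
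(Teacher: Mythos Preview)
Your approach is essentially the same as the paper's: a fine-graining argument on $G_\ep$ combining the Euclidean subcritical sharpness of \cite{DRT17}, the local comparison of Proposition~\ref{prop:comparison}, and LSS domination against the uniform bound $p_c(G_\ep)\geq\delta_0$.

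There is, however, a genuine gap in the locality claim. You assert that $F_\ep^{\mathrm{sub}}(x)$ ``is measurable with respect to the restriction of the colored Poisson process to a slight enlargement of $B_M(x,5\ep)$'', and hence that $\{F_\ep^{\mathrm{sub}}(x)^c\}_{x\in V_\ep}$ is $k$-dependent on $G_\ep$. This is false as stated: whether a point $y\in B_M(x,5\ep)$ is white depends on its nearest $\eta$-point, and there is no deterministic bound on how far away that may be. The small-cell guarantee in Proposition~\ref{prop:comparison} is \emph{probabilistic} (it holds on an event of probability $\geq 1-\delta$), so it cannot by itself yield the deterministic finite-range dependence that LSS requires. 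Your final sentence shows you sense the issue, but the argument as written does not resolve it. The paper fixes this by \emph{building locality into the definition} of the good event:
\[
F^{\mathrm{sub}}_\ep(x)=\Big\{\overline\omega:\ \pazocal{V}_M(\overline\omega')\notin\mathrm{Cross}(x,\ep)\ \text{for every }\overline\omega'\text{ with }\overline\omega'_{|B_M(x,10\ep)}=\overline\omega_{|B_M(x,10\ep)}\Big\},
\]
so that $F^{\mathrm{sub}}_\ep(x)$ is tautologically $B_M(x,10\ep)$-measurable, and then uses the small-cell part of the coupling to show that this stronger event still has probability $\geq 1-\delta$. Redefining your good event in this way closes the gap.

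A minor secondary point on the order of quantifiers: you pick $r$ from the Euclidean sharpness alone, but Proposition~\ref{prop:comparison} has its own threshold $r_0(\delta)$. One must take $r\geq\max\{R_0,r_0\}$ and then apply the coupling on a ball strictly larger than the annulus (the paper uses radius $20\ep$ to accommodate the annulus $B_M(x,4\ep)\setminus B_M(x,\ep)$ and the locality radius $10\ep$).
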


\begin{proof}[Proof of Proposition~\ref{prop:subcritical}]
   Given $\ep>0$ and $x\in G_\varepsilon$, consider the event 
   \begin{equation}
       F^{sub}_{\ep}(x) \coloneqq \{\overline{\omega}\in \Omega^2_{M}:~ \pazocal{V}_{M}(\overline{\omega}')\notin \mathrm{Cross}(x,\varepsilon) ~~\forall\, \overline{\omega}'\in\Omega^2_{M} \text{ such that } \overline{\omega}_{|B_{M}(x,10\varepsilon)}=\overline{\omega}'_{|B_{M}(x,10\varepsilon)}\},
   \end{equation}
   where 
   $$\mathrm{Cross}(x,\varepsilon)\coloneqq \{ V\subset M:~\lr{}{V}{B_{M}(x,2\ep)}{\partial B_{M}(x,5\ep)} \}.$$ In words, this is the event that there is no white crossing of the annulus $B_M(x,5\ep)\setminus B_M(x,2\ep)$, and that this is determined by points in $B_M(x, 10\ep)$.
   The main ingredient in this proof is the following Lemma. 
   \begin{lemma}\label{lem:seed=good_sub}
       For all $p<p_c(\R^d)$ and $\delta>0$, there exists $l=l(p,\delta)<\infty$ and $\lambda_0=\lambda_0(p,\delta)>0$ such that we have 
       $$\forall x_0\in M \text{ and } \lambda\geq \lambda_0 \quad \P\big(\overline{\eta}(\lambda,p)\in F^{sub}_{\ep}(x_0)\big)\geq 1-\delta,$$ where $\varepsilon= l\lambda^{-1/d}$. 
   \end{lemma}
   Before proving Lemma~\ref{lem:seed=good_sub}, we finish the proof of Proposition~\ref{prop:subcritical}.  Fix $p<p_c(\R^d)$. Recall from Lemmas~\ref{lem:bounded_degree} and \ref{lem: bound on Delta} that the degrees in $G_\varepsilon$ are bounded uniformly by $\Delta \coloneqq \sup_{\ep\in(0,1]}\Delta(\ep)<\infty$ for every $\varepsilon\in(0,1]$. Therefore, by the main result of \cite{LigSchSta97}\footnote{The use of \cite{LigSchSta97} here, as well as later in the proof of the supercritical fine-graining, can be avoided. We choose to use it in order to separate the proofs based on counting arguments (Section~\ref{sec:discretization}) from those relying on renormalization techniques (Section~\ref{sec:fine_graining}). See however Appendix~\ref{app:phase-transition} for a coarse-graining argument that does not use \cite{LigSchSta97}.}, there exists a $\delta_1>0$ such that every $10\Delta$-dependent site-percolation model $Z$ on $G_\varepsilon$ satisfying $\P(Z(x)=1)\leq \delta_1$ for all $x\in G_\varepsilon$ is stochastically dominated by a Bernoulli percolation of parameter $\delta_0/2$, where $\delta_0$ is given by Theorem~\ref{thm:discretization_percolation}.
   
   Now, fix $l$ and $\lambda_{0}$ that we get by applying Lemma~\ref{lem:seed=good_sub} with $\delta=\delta_1$. For every $\lambda\geq\lambda_0$, consider the site percolation model $Z_\lambda$ on $G_\varepsilon$ (where $\ep=l\lambda^{-1/d}$) given by 
   \begin{equation}
       Z_\lambda(x) \coloneqq \mathbbm{1}_{F^{sub}_{\ep}(x)^c}(\overline{\eta}(\lambda,p)),
   \end{equation}
   which satisfies $\P(\omega_\lambda(x)=1)\leq \delta_1$ by Lemma~\ref{lem:seed=good_sub}. 
   By Lemma~\ref{lem:quasi_isometry}, $\d_{G_\ep}(x,y)\geq 10\Delta$ implies $\dm(x,y)\geq 20\varepsilon$, so $Z_\lambda$ is $10\Delta$-dependent. Therefore, for every $\lambda\geq\lambda_0$, $Z_\lambda$ is stochastically dominated by a Bernoulli site percolation with parameter $\delta_0/2$ and so does not percolate by Theorem~\ref{thm:discretization_percolation}. 
   However, if $\pazocal{V}_{M}(\overline{\eta}(\lambda,p))$ has an infinite white path, then we can use Lemma~\ref{lem:connected_sets} to obtain an infinite open path in $Z_\lambda$. Therefore, $p_c(M,\lambda)\geq p$ for every $\lambda\geq\lambda_0$. Since $p<p_c(\R^d)$ is arbitrary, the result follows.
\end{proof}

\begin{proof}[Proof of Lemma~\ref{lem:seed=good_sub}]
    Let $p<p_c$ and $\delta>0$. Let $\pazocal{V}_{\R^d}(\lambda,p)$ denote the union of white cells in Voronoi percolation on $\R^d$ of parameter $p$ and intensity $\lambda$.
    It follows from \cite[Theorem 1]{DRT17} that
    \begin{equation}\label{eq:sub_sharpness}
        \P(\lr{}{\pazocal{V}_{\R^d}(1,p)}{B_{\R^d}(0,3R)}{\partial B_{\R^d}(0,4R)}) \xrightarrow{R\to\infty} 0.
    \end{equation} 
    More precisely, \cite[Theorem 1]{DRT17} states that there is a constant \(c = c_p>0\) so that $\P(\lr{}{\pazocal{V}_{\R^d}(1,p)}{0}{\partial B_{\R^d}(0,R)})\leq e^{-cR}$, \(R \geq 1\), from which it is easy to deduce \eqref{eq:sub_sharpness}.
	Indeed, by the FKG inequality and translation invariance, for every $x\in\R^d$ and $R\geq 1$, one has $\P(\lr{}{\pazocal{V}_{\R^d}(1,p)}{B_{\R^d}(x,1)}{\partial B_{\R^d}(x,R)})\leq e^{-cR}/\P(B_{\R^d}(x,1) \text{ fully white})= Ce^{-cR}$.
	Taking a union bound over $x\in \frac{1}{\sqrt d}\Z^d\cap B_{\R^d}(0,3R)$ then yields \eqref{eq:sub_sharpness}. 
    By the rescaling property of the Poisson--Voronoi tessellation on $\R^d$, it follows from \eqref{eq:sub_sharpness} that there exists $R_0=R_0(p,\delta)<\infty$ such that for all \(R \geq R_0\),
    \begin{equation}\label{eq:sub}
        \P(\lr{}{\pazocal{V}_{\R^d}(\lambda,p)}{B_{\R^d}(0,3R\lambda^{-1/d})}{\partial B_{\R^d}(0,4R\lambda^{-1/d})})\leq \delta ~~~ \text{ for all } \lambda>0.
    \end{equation}

    Let $r_0=r_0(\delta)$ be the radius in Proposition~\ref{prop:comparison}. We show that for $l=\max(R_0,r_0)$, the conclusion in Lemma \ref{lem:seed=good_sub} holds.  For $r=20l$, Proposition~\ref{prop:comparison} gives us a coupling $(\xi,\eta)$ with marginals given by \eqref{eq:coupling_marginals} (and a normal coordinate chart $\varphi \in \pazocal{E}(x_0,r\lambda^{-1/d})$) such that for $\lambda$ large enough, the coupling satisfies properties \ref{property 1 coupling}, \ref{property 2 coupling} and \ref{property 3 coupling} of Proposition~\ref{prop:comparison} with probability at least $1-\delta$. Let $E$ be the event that these properties hold, so we have $\P(E)\geq 1-\delta$ for all $\lambda\geq \lambda_0(r,\delta)$.

    Let \(S \coloneqq B_{\R^d}(0,r\lambda^{-1/d}) = \im(\varphi)\).
   Now, on the event $E$, we color the points in $\xi$ and $\eta$ as follows. We color each point in $\xi|_S$ white or black independently with probability $p$ and $1-p$, respectively. This induces a coloring of $\eta|_{\varphi^{-1}(S)}$ by assigning a point $x$ the color of $\varphi(x)$.\footnote{For the sake of defining a global coupling of colored points, on the event $E$ we color the points in $\xi|_{S^c}$ and $\eta|_{\varphi^{-1}(S)^c}$ independently, and on the event $E^c$ we color all points in $\eta$ and $\xi$ independently.} We thus obtain coupled configurations $(\overline{\eta},\overline{\xi})$. In what follows, write $\ep=l\lambda^{-1/d}$. Define
    $$A\coloneqq \{\nlr{}{~\pazocal{V}_{\R^d}(\overline{\xi})~}{B_{\R^d}(0,3\varepsilon)}{\partial B_{\R^d}(0,4\varepsilon)}\}.$$
    Since $l\geq R_0$, \eqref{eq:sub} gives that $\P(A)\geq 1-\delta$.
    We claim that
    \begin{equation}\label{eq:lemma_sub_inclusion}
        A\cap E \subset \{\overline{\eta}\in F^{sub}_{\ep}(x_0)\}.
    \end{equation}
    Taking this claim for granted gives that $\P(\overline{\eta}\in F^{sub}_{\ep}(x_0))\geq 1-2\delta$ for $\lambda \geq \lambda_0(r,\delta)$, which proves the lemma since $\delta>0$ is arbitrary. 
    
  We now prove the inclusion \eqref{eq:lemma_sub_inclusion}. We claim that on the event $E$, if there exists a white crossing of $B_M(x_0,5\ep)\setminus B_{M}(x_0,2\ep)$  in $\pazocal{V}_{M}(\overline{\eta})$ then there is a white crossing of $B_{\R^d}(0, 4\varepsilon)\setminus B_{\R^d}(0,3\ep)$ in $\pazocal{V}_{\R^d}(\overline{\xi})$.

   To prove this, let $\gamma$ be a white path in $\pazocal{V}_{M} (\overline{\eta})$ crossing $B_{M}(x_0,5\ep)\setminus B_{M}(x_0,2\ep)$. Using property (3) of the coupling and arguing as in Lemma \ref{lem:connected_sets}, $\gamma$ induces a white path $\pi$ in $\pazocal{G}_M(\eta)$ such that the first vertex of $\pi$ is in $B_{M}(x_0,3\ep)$ and the last is in $M\setminus B_M(x_0,4\ep)$. Property (1) and the definition of the coloring implies that $\varphi(\pi)$ is a white path in $\pazocal{G}_{\R^d}(\xi)$. Now, since $\varphi$ is a normal coordinate chart, $\varphi_* \mathrm{d}_M(0,x)=\mathrm{d}_{\R^d}(0,x)$ for all $x\in B_{\R^d}(0,r)$. Hence, $\varphi(\pi)$ is a white path in \(\pazocal G_{\R^d }(\xi)\) whose first vertex is in $B_{\R^d}(0,3\ep)$ and last vertex is in $\R^d\setminus B_{\R^d}(0,4\ep)$. Finally, by the definition of $\pazocal{G}_{\R^d}(\xi)$, we can use $\varphi(\pi)$ to obtain a continuous path \(\gamma'\) in \(\pazocal{V}_{\R^d}(\overline{\xi})\) that crosses $B_{\R^d}(0, 4\varepsilon)\setminus B_{\R^d}(0,3\ep)$.
    
    Furthermore, the event $E$ guarantees that the cells intersecting $B_{M}(x_0,5\varepsilon)$ have diameter smaller than $\varepsilon/5$, so they are completely determined by the points contained in $B_{M}(x_0,10\varepsilon)$. This shows \eqref{eq:lemma_sub_inclusion}, concluding the proof.
\end{proof}

\begin{remark}\label{rem:sub_fine-graining_minimalassumption}
We note that the proof of Proposition \ref{prop:subcritical} only required that $p_c(G_\ep)\geq \delta_0$. By Remark~\ref{rem:pc>0_minimalassumption}, the assumptions \ref{thm cond : connected+one-ended} and \ref{thm cond: log expansion} of Theorem \ref{thm:main} could be dropped. 
\end{remark}

\subsection{Supercritical fine-graining}
\label{sec:supercritical}

In this section, we complete the proof of Theorem~\ref{thm:main}. Similarly to the subcritical part, here the occurrence of the ``good local event'' follows from the recently proven \cite{DS25} \emph{supercritical} sharpness for Voronoi percolation on $\R^d$, combined with the local comparison given by Proposition~\ref{prop:comparison}. 

\begin{proposition}\label{prop:supercritical}
$\limsup_{\lambda\rightarrow \infty} p_u(M,\lambda)\leq p_c(\R^d)$.
\end{proposition}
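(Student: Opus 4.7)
The plan is to mirror the structure of Proposition~\ref{prop:subcritical}, substituting the supercritical sharpness of \cite{DS25} for the subcritical one of \cite{DRT17} and working with the stronger uniqueness parameter $\overline{p}_u(G_\ep)$ from Theorem~\ref{thm:discretization_percolation} (rather than $p_c(G_\ep)$). Fix $p>p_c(\R^d)$. Let me define a ``good local event'' $F^{sup}_\ep(x)\subset \Omega_M^2$ that encodes supercritical local uniqueness: namely, there is a \emph{unique} white cluster of $\pazocal{V}_M(\overline{\omega})\cap B_M(x,4\ep)$ that reaches $\partial B_M(x,4\ep)$, this cluster meets $B_M(x,\ep)$, and the whole event is measurable with respect to $\overline{\omega}|_{B_M(x,10\ep)}$. (As in the subcritical case, the last condition will be ensured by requiring that every cell intersecting $B_M(x,5\ep)$ is contained in $B_M(x,10\ep)$.)

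The analogue of Lemma~\ref{lem:seed=good_sub} is to show that for every $\delta>0$ there exist $l(p,\delta)<\infty$ and $\lambda_0(p,\delta)>0$ with $\P(\overline\eta(\lambda,p)\in F^{sup}_{\ep}(x_0))\geq 1-\delta$ for all $x_0\in M$ and $\lambda\geq \lambda_0$, where $\ep=l\lambda^{-1/d}$. On the Euclidean side, the supercritical sharpness proved in \cite{DS25} gives that for $p>p_c(\R^d)$, the probability of the Euclidean analogue of $F^{sup}_\ep(x_0)$ (i.e.\ a unique crossing cluster of a large annulus in $\pazocal V_{\R^d}(\lambda,p)$) tends to $1$ as the annulus scale $R$ grows, uniformly in $\lambda$ by the Euclidean scaling. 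Then Proposition~\ref{prop:comparison}, applied with $r=\Theta(l)$, couples $\overline\eta(\lambda,p)$ with its Euclidean counterpart $\overline\xi(\lambda,p)$ so that the two Voronoi graphs agree in $B_M(x_0,r\lambda^{-1/d})$ and every cell meeting this ball has diameter $\leq r\lambda^{-1/d}/100$; transporting the Euclidean local uniqueness event through the chart $\varphi$ yields $F^{sup}_{\ep}(x_0)$ on this good coupling event.

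The fine-graining step follows the blueprint of Proposition~\ref{prop:subcritical}, but using Theorem~\ref{thm:discretization_percolation} in the direction $\overline p_u(G_\ep)\leq 1-\delta_0$. Set $Z_\lambda(x)\coloneqq\mathbbm 1_{F^{sup}_\ep(x)}(\overline\eta(\lambda,p))$ for $x\in G_\ep$ with $\ep=l\lambda^{-1/d}$. By Lemma~\ref{lem:quasi_isometry}, $Z_\lambda$ is $k$-dependent for a $k$ depending only on $\Delta\coloneqq\sup_{\ep\in(0,1]}\Delta(\ep)<\infty$, since $\d_{G_\ep}(x,y)\geq k$ forces $\dm(x,y)\geq 20\ep$. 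By \cite{LigSchSta97}, choosing $\delta$ small enough allows $Z_\lambda$ to stochastically dominate a Bernoulli site percolation on $G_\ep$ of parameter $1-\delta_0/2>\overline p_u(G_\ep)$. Consequently, in $Z_\lambda$ there exists almost surely a unique infinite open cluster $\mathcal C\subset G_\ep$ whose complement $G_\ep\setminus \mathcal C$ contains no infinite connected component.

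The main obstacle is translating this renormalized picture into uniqueness of the infinite white cluster of $\pazocal V_M(\lambda,p)$. The key observation is that for neighboring good vertices $x,y\in\mathcal C$, the two ``giant crossing clusters'' guaranteed by $F^{sup}_\ep(x)$ and $F^{sup}_\ep(y)$ must coincide in $M$: the giant cluster at $x$ crosses from $B_M(x,\ep)\ni x$ to $\partial B_M(x,4\ep)$, which (since $\dm(x,y)\leq 4\ep$) forces it to intersect $B_M(y,4\ep)$ and hence to be part of the unique crossing cluster of $y$, and vice versa. Thus the giant clusters glue along $\mathcal C$ into a single unbounded white cluster $\widetilde{\mathcal C}$ in $\pazocal V_M(\lambda,p)$. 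To rule out competing infinite white clusters, suppose $\gamma\subset \pazocal V_M(\lambda,p)$ were an unbounded white path disjoint from $\widetilde{\mathcal C}$; by Lemma~\ref{lem:connected_sets}, $\gamma$ induces an infinite path in $G_\ep$ along vertices whose $4\ep$-neighborhoods meet $\gamma$, and no such vertex can lie in $\mathcal C$ (otherwise $\gamma$ would intersect $\widetilde{\mathcal C}$), so this path lies in $G_\ep\setminus\mathcal C$, contradicting the finiteness of its connected components. This yields $p_u(M,\lambda)\leq p$ for $\lambda$ large, and the arbitrariness of $p>p_c(\R^d)$ concludes the proposition.
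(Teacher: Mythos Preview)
Your overall strategy matches the paper's: define a supercritical local-uniqueness event, use Proposition~\ref{prop:comparison} together with \cite{DS25} to show it holds with high probability, stochastically dominate a Bernoulli percolation above $\overline p_u(G_\ep)$, and glue the local clusters along the infinite open component of $G_\ep$. The uniqueness-by-contradiction step is also correct in spirit.

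However, your gluing step has a genuine gap. With the radii $\ep$ and $4\ep$ in your $F^{sup}_\ep(x)$, the claim that ``the giant cluster at $x$ \ldots\ is forced to intersect $B_M(y,4\ep)$'' for a $G_\ep$-neighbor $y$ is false. Since $\dm(x,y)$ can be arbitrarily close to $4\ep$, the cluster $C_x$ may lie entirely on the side of $B_M(x,4\ep)$ opposite to $y$ --- for instance, a radial white path from $B_M(x,\ep)$ to $\partial B_M(x,4\ep)$ pointing away from $y$ never enters $B_M(y,4\ep)$. Even when $C_x$ does meet $B_M(y,4\ep)$, you would still need the intersection to reach $\partial B_M(y,4\ep)$ in order to invoke uniqueness at $y$, and that is not guaranteed either.

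The paper fixes this by separating the ``existence'' and ``uniqueness'' scales: its event $\mathrm U(x,\ep)$ asks for a cluster crossing the large annulus $B_M(x,16\ep)\setminus B_M(x,2\ep)$ which is the \emph{only} component of $\pazocal V_M\cap B_M(x,18\ep)$ crossing the smaller annulus $B_M(x,9\ep)\setminus B_M(x,6\ep)$, with measurability in $B_M(x,40\ep)$. For a neighbor $y$ with $\dm(x,y)\leq 4\ep$, the long crossing at $x$ must then cross the inner annulus at $y$ (its endpoints sit at distances $\leq 6\ep$ and $\geq 12\ep$ from $y$), and uniqueness at the inner scale forces it to connect to $V_y$. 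Your argument can be repaired by adopting a similar two-scale definition of $F^{sup}_\ep$; once that is done, the rest of your fine-graining and uniqueness steps go through essentially as written.
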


We use the same notation as in the previous section. 

\begin{proof}[Proof of Proposition~\ref{prop:supercritical}]
Given $\ep>0$ and $x\in G_\varepsilon$, consider the event \begin{equation}
       F^{sup}_{\ep}(x) \coloneqq \{\overline{\omega}\in \Omega^2_{M}:~ \pazocal{V}_{M}(\overline{\omega}')\in \mathrm{U}(x,\varepsilon) ~~\forall\, \overline{\omega}'\in\Omega^2_{M} \text{ such that } \overline{\omega}_{|B_{M}(x,20\varepsilon)}=\overline{\omega}'_{|B_{M}(x,20\varepsilon)}\},
   \end{equation}
   where \begin{align*}
\mathrm{U}(x,\varepsilon) := & \left\{ V \subset M : \; \text{there exists a component of } V \text{ crossing the annulus } \right. \\
& B_{M}(x, 16\varepsilon) \setminus B_{M}(x, 2\varepsilon), \; \text{which is the only component of } V \cap B_{M}(x, 18\varepsilon) \\
& \left. \text{crossing the annulus } B_{M}(x, 9\varepsilon) \setminus B_{M}(x, 6\varepsilon) \right\}.
\end{align*}
We will prove that the following holds.
   \begin{lemma}\label{lem:seed=good_sup}
       For every $p>p_c(\R^d)$ and $\delta>0$, there exists $l=l(p,\delta)<\infty$ and $\lambda_0=\lambda_0(p,\delta)>0$ such that we have 
       $$\forall x_0\in M \text{ and } \lambda\geq \lambda_0 \quad \P(\overline{\eta}(\lambda,p)\in F^{sup}_{\ep}(x_0))\geq 1-\delta,$$ where $\ep= l\lambda^{-1/d}$.
   \end{lemma}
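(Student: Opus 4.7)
The plan is to mirror the proof of Lemma~\ref{lem:seed=good_sub} line by line, substituting the supercritical sharpness result of Dembin--Severo~\cite{DS25} for the subcritical one of~\cite{DRT17}. Concretely, fix $p>p_c(\R^d)$ and $\delta>0$, and let $\mathrm{U}_{\R^d}(y,\rho)$ denote the Euclidean analogue of the event $\mathrm{U}(x,\rho)$ (defined with Euclidean balls and the Euclidean metric). By~\cite{DS25} applied on $\R^d$, together with the scaling property of the Euclidean Poisson--Voronoi tessellation, there exists $R_0=R_0(p,\delta)$ such that
\begin{equation*}
\P\big(\pazocal{V}_{\R^d}(\lambda,p)\in \mathrm{U}_{\R^d}(0,R\lambda^{-1/d})\big)\geq 1-\delta
\end{equation*}
for every $R\geq R_0$ and $\lambda>0$.

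Setting $l:=\max(R_0,r_0)$ with $r_0=r_0(\delta)$ coming from Proposition~\ref{prop:comparison}, I would apply that proposition with $r:=50\,l$ (chosen large enough to contain $B_{\R^d}(0,40\ep)$ comfortably, with a buffer for the small-cell bound). For all sufficiently large $\lambda$ and every $x_0\in M$, this yields a coupling $(\xi,\eta)$ and a normal chart $\varphi\in \pazocal{E}(x_0,r\lambda^{-1/d})$ such that on a high-probability event $E$ (with $\P(E)\geq 1-\delta$) the graphs $\pazocal{G}_{\R^d}(\xi)$ and $\varphi_*\pazocal{G}_M(\eta)$ coincide on $S:=B_{\R^d}(0,r\lambda^{-1/d})$ and all Voronoi cells meeting $S$ or $\varphi^{-1}(S)$ have diameter at most $r\lambda^{-1/d}/100\leq \ep/2$, where $\ep:=l\lambda^{-1/d}$. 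Independently coloring the points of $\xi|_S$ white with probability $p$ and transferring colors to $\eta|_{\varphi^{-1}(S)}$ via $\varphi$ produces coupled configurations $\overline{\xi},\overline{\eta}$, and the event
\begin{equation*}
A:=\{\pazocal{V}_{\R^d}(\overline{\xi})\in \mathrm{U}_{\R^d}(0,\ep)\}
\end{equation*}
satisfies $\P(A)\geq 1-\delta$ since $l\geq R_0$.

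The heart of the argument is to establish the inclusion $A\cap E\subset\{\overline{\eta}\in F^{sup}_\ep(x_0)\}$. Three ingredients suffice. First, since $\varphi$ is a normal chart, $\varphi_*\dm(0,x)=\de(0,x)$ for all $x\in S$, so $\varphi^{-1}$ identifies the Euclidean balls $B_{\R^d}(0,t)$ with the manifold balls $B_M(x_0,t)$ for every relevant radius $t$. Second, the graph identification given by Proposition~\ref{prop:comparison} combined with Lemma~\ref{lem:connected_sets} applied in both directions (as in the subcritical proof) sets up a bijective correspondence between continuous white crossings of a Euclidean annulus in $\pazocal{V}_{\R^d}(\overline{\xi})$ and crossings of the corresponding $M$-annulus in $\pazocal{V}_M(\overline{\eta})$, and likewise between the connected components of $\pazocal{V}_{\R^d}(\overline{\xi})\cap B_{\R^d}(0,18\ep)$ and those of $\pazocal{V}_M(\overline{\eta})\cap B_M(x_0,18\ep)$. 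Third, the small-cells bound ensures that $\pazocal{V}_M(\overline{\eta})\cap B_M(x_0,18\ep)$ and its component structure depend only on the points of $\eta$ in $B_M(x_0,19\ep)\subset B_M(x_0,40\ep)$. Combining these facts, the unique crossing component on the Euclidean side transfers to the manifold side, and this property is stable under any modification of $\overline{\eta}$ outside $B_M(x_0,40\ep)$, giving $\overline{\eta}\in F^{sup}_\ep(x_0)$. Therefore $\P(\overline{\eta}\in F^{sup}_\ep(x_0))\geq 1-2\delta$, and since $\delta>0$ is arbitrary the lemma follows.

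The main obstacle, relative to the subcritical case, is the non-monotonic nature of the local uniqueness event: stochastic domination is not available, so one must rely on the \emph{exact} identification of the two Voronoi graphs provided by Proposition~\ref{prop:comparison}, and perform extra bookkeeping to ensure that $F^{sup}_\ep(x_0)$ is genuinely determined by points in $B_M(x_0,40\ep)$. The gap between the radius $18\ep$ appearing in the definition of $\mathrm{U}(x,\ep)$ and the radius $40\ep$ appearing in the definition of $F^{sup}_\ep(x)$ is precisely the buffer needed for the small-cells bound to yield this determination.
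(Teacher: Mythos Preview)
Your overall strategy matches the paper's: invoke supercritical sharpness on $\R^d$ from \cite{DS25}, couple via Proposition~\ref{prop:comparison}, and transfer the local-uniqueness event through the graph identification. But the inclusion $A\cap E\subset\{\overline{\eta}\in F^{sup}_\ep(x_0)\}$ does not hold as written, because you take the Euclidean event $\mathrm{U}_{\R^d}(0,\ep)$ with the \emph{same} annulus radii $2,6,9,16,18$ as the target manifold event $\mathrm{U}(x_0,\ep)$.

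Proposition~\ref{prop:comparison} identifies only the Voronoi \emph{graphs}: the Euclidean cell $C(\varphi(y);\xi)$ and the manifold cell $C(y;\eta)$ have the same center (via $\varphi$) and the same neighbors, but in general different shapes. Transferring a continuous crossing therefore costs a buffer of up to one cell diameter at each step (continuous $\to$ discrete $\to$ continuous). Concretely, with your bound $\diam\leq\ep/2$, a Euclidean crossing of $B_{\R^d}(0,16\ep)\setminus B_{\R^d}(0,2\ep)$ produces on $M$ only a crossing of $B_M(x_0,15.5\ep)\setminus B_M(x_0,2.5\ep)$, which does not yield the required crossing of $B_M(x_0,16\ep)\setminus B_M(x_0,2\ep)$. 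The same buffer loss invalidates the claimed ``bijective correspondence'' between components of $\pazocal{V}_{\R^d}(\overline{\xi})\cap B_{\R^d}(0,18\ep)$ and of $\pazocal{V}_M(\overline{\eta})\cap B_M(x_0,18\ep)$: a connecting path in one model, once transferred, may exit the ball of radius $18\ep$ in the other.

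The paper's fix is to use on the Euclidean side an event $\tilde{\mathrm{U}}(\ep)$ with strictly \emph{nested} radii --- a crossing of the larger annulus $B_{\R^d}(0,18\ep)\setminus B_{\R^d}(0,\ep)$, and uniqueness (within $B_{\R^d}(0,9\ep)$) among crossings of the smaller annulus $B_{\R^d}(0,8\ep)\setminus B_{\R^d}(0,7\ep)$ --- so that after the buffer loss one still recovers $\mathrm{U}(x_0,\ep)$ on $M$. Once you make this adjustment (and observe that \cite{DS25} applies equally well to $\tilde{\mathrm{U}}$ by a routine union-bound and rescaling), your argument goes through.
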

   
   Before proving Lemma~\ref{lem:seed=good_sup}, we finish the proof of Proposition~\ref{prop:supercritical}.  Fix $p>p_c(\R^d)$. Recall from Lemma~\ref{lem:bounded_degree} that the degrees in $G_\varepsilon$ are bounded uniformly by $\Delta = \sup_{\ep \in (0,1]}\Delta(\ep)$ for every $\varepsilon\in(0,1]$. Therefore, by the main result of \cite{LigSchSta97}, there exists a $\delta_1>0$ such that every $20\Delta$-dependent site-percolation model $Z$ on $G_\varepsilon$ satisfying $\P(Z(x)=1)\geq 1-\delta_1$ for all $x\in G_\varepsilon$ stochastically dominates a Bernoulli percolation of parameter $1-\delta_0/2$, where $\delta_0$ is given by Theorem~\ref{thm:discretization_percolation}.
   
   Now, let $l$ and $\lambda_{0}$ be given by  Lemma~\ref{lem:seed=good_sup} with $\delta=\delta_1$. For every $\lambda\geq\lambda_0$, consider the site percolation model $Z_\lambda$ on $G_\varepsilon$ (where $\ep=l\lambda^{-1/d}$) given by 
   \begin{equation}
       Z_\lambda(x) \coloneqq \mathbbm{1}_{F^{sup}_{\ep}(x)}(\overline{\eta}(\lambda,p)).
   \end{equation}
   As in the proof of Proposition \ref{prop:subcritical}, it follows using Lemma~\ref{lem:quasi_isometry} that $Z_\lambda$ is $20\Delta$-dependent. Therefore, for every $\lambda\geq\lambda_0$, $Z_\lambda$ stochastically dominates a Bernoulli percolation with parameter $1-\delta_0/2$. Now, it follows from the definition of $\overline{p}_u(G_\ep)$ that this site percolation has a unique infinite open component whose complement has no infinite component. Finally, we show that this implies that $\pazocal{V}_M(\overline{\eta})$ has a unique unbounded component, completing the proof. First, to see that $\pazocal{V}_M(\overline{\eta})$ has an infinite component, let $\pi$ be an infinite path in $G_\ep$ all of whose vertices $x$ satisfy $Z_{\lambda}(x)=1$. For each $x\in \pi$, let $V_x$ be the component in the definition of $U(x,\ep)$. Let $x$ and $y$ be neighboring vertices in $\pi$, which means $\d_M(x,y)\leq 4\ep$, by the definition of $G_\ep$. It follows that $V_x$ crosses $B_M(y, 9\ep) \setminus B_M(y,6\ep)$ and, by the uniqueness part of the definition of $U(y,\ep)$, $V_x$ is connected to $V_y$ in $\pazocal V_M(\overline{\eta})$. So $\cup_{x\in \pi} V_x$ is an unbounded connected subset of $\pazocal V_M(\overline{\eta})$, as required. Second, we show that there is a unique unbounded component in $\pazocal V_M(\overline{\eta})$. Let $\pazocal{C}$ be the unbounded component in $\pazocal V_M(\overline{\eta})$ containing $\cup_{x\in \pi} V_x$.
   Suppose that there is a different unbounded component $\pazocal{C}'$.
   Let $\gamma'$ be an infinite path in $\pazocal{C}'$.
   By Lemma \ref{lem:connected_sets}, we may associate to it a discrete path $\pi'$ in $G_\ep$ such that for all $x\in \pi'$, $\d_M(\gamma', x)\leq 2\ep$.
   Since the complement of the unique open cluster in the site percolation \(Z_\lambda\) contains no infinite connected sets, it follows that \(\pi'\) intersects the infinite open cluster.
   Now arguing as in the existence part above, we obtain that $\gamma'$ is connected to $\pazocal{C}$ in $\pazocal{V}_M(\overline{\eta})$, which contradicts the assumption that $\pazocal{C}$ and $\pazocal{C}'$ are different components of $\pazocal{V}_M(\overline{\eta})$.
   This completes the proof of Proposition~\ref{prop:supercritical}.
\end{proof}

\begin{proof}[Proof of Lemma~\ref{lem:seed=good_sup}]
Fix $p>p_c(\R^d)$ and $\delta>0$.
It follows easily from \cite{BR_Voronoi_percolation} (for $d=2$) and \cite{DS25} (for $d\geq3$) that for every $p>p_c(\R^d)$, 
	\begin{equation}\label{eq:sup_sharpness}
        \P(\pazocal{V}_{\R^d}(1,p)\in \tilde{\mathrm{U}}(R)) \xrightarrow{R\to\infty} 1,
    \end{equation}
    where $\tilde{\mathrm{U}}(R):= \{V\subset\R^d:~\text{there exists a component of } V \text{ crossing the annulus } B_{\R^d}(0,18R)\setminus B_{\R^d}(0,R), \text{ which is the only component of } V\cap B_{\R^d}(0,9R) \text{ crossing the annulus } B_{\R^d}(0,8R)\setminus B_{\R^d}(0,7R) \}$.
    By rescaling, it follows from \eqref{eq:sup_sharpness} that there exists $R_0=R_0(p,\delta)<\infty$ such that for all \(R \geq R_0\),
    \begin{equation}\label{eq:sup}
       \P\big(\pazocal{V}_{\R^d}(\lambda,p)\in \tilde{\mathrm{U}}(R\lambda^{-1/d})\big)\geq 1- \delta ~~~ \text{ for all } \lambda>0.
    \end{equation}

    We quickly explain how to deduce \eqref{eq:sup_sharpness} from \cite{BR_Voronoi_percolation} and \cite{DS25}. For $d=2$, \cite[Theorem 1.2]{BR_Voronoi_percolation} together with a standard union bound implies that for every $p>1/2=p_c(\R^2)$, large macroscopic annuli contain a surrounding  circuit with (exponentially) high probability. By 2D topology, the existence of a circuit implies the uniqueness of the crossing cluster, and \eqref{eq:sup_sharpness} follows.
    As for $d\geq3$, \cite[Theorem 1.2]{DS25} states that $  \P(\pazocal{V}_{\R^d}(1,p)\in \pazocal{U}(L)) \xrightarrow{L\to\infty} 1$ for a very similar event $\pazocal{U}(L)$, which differs from $\tilde{U}(R)$ in the norm and aspect ratio of the annuli involved, as well as the fact that $\tilde{U}(R)$ includes the existence of a crossing in a larger annulus. It is easy to include the existence of such a crossing in $\pazocal{U}(L)$ since this trivially holds with high probability in the supercritical phase. It is then straightforward (arguing as in the proof of Proposition \ref{prop:supercritical}) to realize $\tilde{U}(R)$ by intersecting a bounded number of translates of $\pazocal{U}(L)$ (with, say, $L=R/100\sqrt{d}$), so that \eqref{eq:sup_sharpness} follows easily by a union bound.

 Let $r_0=r_0(\delta)$ be the radius in Proposition~\ref{prop:comparison}. We show that for $l=\max(R_0,r_0)$, the conclusion in Lemma~\ref{lem:seed=good_sup} holds. For $r=20l$, Proposition~\ref{prop:comparison} gives us a coupling $(\xi,\eta)$ with marginals given by \eqref{eq:coupling_marginals} (and normal coordinate chart $\varphi \in \pazocal{E}(x_0,r\lambda^{-1/d})$) such that for $\lambda$ large enough, the coupling satisfies properties \ref{property 1 coupling}, \ref{property 2 coupling} and \ref{property 3 coupling} of Proposition~\ref{prop:comparison} with probability at least $1-\delta$. Let $E$ be the event that these properties hold, so we have $\P(E)\geq 1-\delta$ for all $\lambda\geq \lambda_0(r,\delta)$.
We can also construct a corresponding coupling of colored configurations $(\overline{\eta},\overline{\xi})$ in the same way as in the proof of Lemma~\ref{lem:seed=good_sub}. In what follows, we write $\ep=l\lambda^{-1/d}$. Define
    $$A\coloneqq \{\pazocal{V}_{\R^d}(\overline{\xi})\in \tilde{\mathrm{U}}(\varepsilon)\}.$$
    Since $l\geq R_0$, \eqref{eq:sup} shows that $\P(A)\geq 1-\delta$. We claim that
    \begin{equation}\label{eq:lemma_sup_inclusion}
        A\cap E \subset \{\overline{\eta}\in F^{sup}_{\ep}(x_0)\}.
    \end{equation}
    This inclusion then gives $\P(\overline{\eta}\in F^{sup}_{\ep}(x_0))\geq 1-2\delta$ for $\lambda \geq\lambda_0(r,\delta)$, which completes the proof since $\delta>0$ is arbitrary. 

We now prove this inclusion. Suppose $A\cap E$ holds.  By the definition of $A$, there exists a crossing of $B_{\R^d}(0,18\ep)\setminus B_{\R^d}(0,\ep)$ in $\pazocal{V}_{\R^d}(\overline{\xi})$. Arguing as in the proof of Lemma~\ref{lem:seed=good_sub} we get a crossing of $B_{M}(x_0,16\ep)\setminus B_{M}(x_0,2\ep)$ in $\pazocal{V}_{M}(\overline{\eta})$. This verifies the first requirement in the definition of $F^{sup}_\ep(x_0)$.

To verify the second requirement, suppose that there exist two different components $\pazocal{V}_{M} (\overline{\eta})$ that cross $B_M(x_0,9\ep)\setminus B_M(x_0, 6\ep)$. Arguing as in the proof of Lemma~\ref{lem:seed=good_sub}, we obtain two different components in $\pazocal{V}_{\R^d}(\overline{\xi}) \cap B_{\R^d}(0,19\ep)$ that cross $B_{\R^d}(0,8\ep)\setminus B_{\R^d}(0,7\ep)$. This contradicts the assumption that $A\cap E$ holds. 

Finally, the event $E$ guarantees that the cells intersecting $B_{M}(x_0,18\varepsilon)$ have diameter smaller than $\varepsilon/5$, so they are completely determined by the points of $\overline{\eta}$ contained in ${B_{M}(0,20\varepsilon)}$. This completes the proof.
\end{proof}

\begin{remark}\label{rem:sup_fine-graining_pc_only}
If we only wanted to prove that $\limsup_{\lambda\to\infty}p_c(M,\lambda)\leq p_c(\R^d)$, it would be enough to have $p_c(G_\ep)\leq 1-\delta_0$ for some family of embedded graphs of mesh-size $\ep \in (0,1]$. For example, we can construct such a family assuming that $M$ contains an isometric copy of some manifold satisfying the assumptions of Theorem~\ref{thm:main}. See also Remark~\ref{rem:exp_cutsets_transience} for another class of manifolds for which one could hope to prove such a statement.
\end{remark}

\appendix

\section{Normal coordinates}\label{app:normal coordinates}
For normal coordinates, the following comparison theorem is available:

\begin{theorem}
\label{thm:metric_comparison}
    Let \(x_0 \in M\), \(r_0 \leq r_\text{inj}(M)\) and \(\varphi \in \pazocal{E}(x_0,r_0)\).
    Let \(s_c\) be the function defined in \eqref{eq:def_sc}.
    For \(g\) expressed in \(\varphi\)-coordinates, the following bounds hold:
    \begin{enumerate}
        \item If all sectional curvatures are bounded above by a constant \(c = 1/R^2 > 0\) and \(r_0 \leq \pi R\), then for all \(x \in B_M(x_0,r_0) \setminus \{x_0\}\) and
        \(w \in T_x M\), it holds that \(g(w,w) \geq (s_{c}(r_0)^2/r_0^2)\langle w,w \rangle\).
        \item If all sectional curvatures are bounded below by a constant \(c \leq 0\), then for all \(x \in B_M(x_0,r_0) \setminus \{x_0\}\) and
        \(w \in T_x M\), it holds that \(g(w,w) \leq (s_{c}(r_0)^2/r_0^2)\langle w,w \rangle\).
    \end{enumerate}
\end{theorem}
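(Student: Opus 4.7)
The plan is to reduce both bounds to the Rauch comparison theorem applied to a Jacobi field along the radial geodesic from $x_0$ to $x$, using the Gauss lemma for the radial contribution and generating the $s_c(r_0)/r_0$ factor via the monotonicity of $r\mapsto s_c(r)/r$.

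Fix $x\in B_M(x_0,r_0)\setminus\{x_0\}$, with $r\coloneqq\dm(x_0,x)\leq \pi R$ in case (1), and set $y\coloneqq\varphi(x)\in\R^d$, $v\coloneqq \Phi(y/r)\in T_{x_0}M$, where $\Phi$ is the linear isometry implicit in $\varphi$. Let $\gamma(t)\coloneqq\exp_{x_0}(tv)$, so $\gamma(r)=x$. For $w\in T_xM$, set $\tilde w\coloneqq d\varphi_x(w)\in\R^d$ and decompose $\tilde w=\tilde w_r+\tilde w_t$ orthogonally with $\tilde w_r$ parallel to $y$. The identification $\varphi^{-1}=\exp_{x_0}\circ\Phi$ combined with the standard Jacobi field formula $d(\exp_{x_0})_{rv}(\zeta)=J(r)$ (where $J$ satisfies $J(0)=0$ and $J'(0)=\zeta/r$) yields $w=J(r)$ for the Jacobi field $J$ along $\gamma$ with $J'(0)=\Phi(\tilde w)/r$, and in particular $|J'(0)|_g=|\tilde w|/r$.

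Write $J=J_r+J_t$, where $J_r'(0)$ is parallel to $\gamma'(0)$ and $J_t'(0)$ is orthogonal to it. Then $J_r$ remains tangent to $\gamma$ and is given by the parallel transport of $J_r'(0)$ scaled by $t$; in particular $|J_r(r)|_g=r|J_r'(0)|=|\tilde w_r|$, which is the content of the Gauss lemma. The Jacobi field $J_t$ remains $g$-orthogonal to $\gamma'$ along $\gamma$ and satisfies $|J_t'(0)|=|\tilde w_t|/r$. Orthogonality of $J_r(r)$ and $J_t(r)$ in $g$ therefore gives
\[g(w,w)=|\tilde w_r|^2+|J_t(r)|_g^2.\]

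Now I would apply the Rauch comparison theorem to $J_t$. In case (1), the upper bound $K_M\leq c=1/R^2$ together with $r\leq \pi R$ yields $|J_t(r)|_g\geq s_c(r)\,|J_t'(0)|=(s_c(r)/r)\,|\tilde w_t|$; in case (2), the lower bound $K_M\geq c\leq 0$ yields the reverse inequality. The function $r\mapsto s_c(r)/r$ equals $1$ at $r=0$, is decreasing on $(0,\pi R)$ when $c>0$ (since $\sin(x)/x$ is decreasing on $(0,\pi)$), and is nondecreasing on $(0,\infty)$ when $c\leq 0$ (the $c=0$ case being trivial and $\sinh(x)/x$ being increasing in the $c<0$ case). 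Consequently $s_c(r)/r\geq s_c(r_0)/r_0$ in case (1) (with both bounded above by $1$), and $s_c(r)/r\leq s_c(r_0)/r_0$ in case (2) (with both bounded below by $1$). Substituting into the displayed identity gives, in case (1),
\[g(w,w)\geq |\tilde w_r|^2+(s_c(r_0)/r_0)^2|\tilde w_t|^2\geq (s_c(r_0)^2/r_0^2)\,\langle w,w\rangle,\]
where the second inequality uses $(s_c(r_0)/r_0)^2\leq 1$ to lower-bound the radial term; case (2) is symmetric, using $(s_c(r_0)/r_0)^2\geq 1$. The main technical care required is in the Jacobi field bookkeeping (in particular the normalization factor of $1/r$ in $J'(0)$), and in verifying that $r<\pi R$ places us within the range of applicability of Rauch; once these are in order the conclusion follows immediately from standard comparison theory.
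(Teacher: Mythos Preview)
Your proof is correct and follows essentially the same route as the paper's: both separate the radial and tangential contributions in normal coordinates, handle the radial part via the Gauss lemma, bound the tangential part by comparison with the constant-curvature model, and then absorb the radial term using $s_c(r_0)/r_0\le 1$ (resp.\ $\ge 1$) together with the monotonicity of $r\mapsto s_c(r)/r$. The only difference is packaging: the paper cites Lee's metric comparison theorem (\cite[Theorem 11.10]{LeeRM}) to obtain $g\gtrless g_c$ and then manipulates the explicit polar form $g_c = dr^2 + s_c(r)^2\hat g$, whereas you unpack that comparison directly via Rauch applied to the orthogonal Jacobi field $J_t$---these are the same argument at different levels of abstraction.
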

\begin{proof}
    From \cite[Theorem 11.10]{LeeRM} it follows, under the conditions of the first claim, that \(g(w,w) \geq g_{c}(w,w)\) where \(g_c\)
    denotes the constant-curvature \(c\) metric in \(\varphi\)-coordinates.
    Now, using \cite[Theorem 10.14]{LeeRM} (and the notation introduced in the paragraphs preceding this theorem), we can bound \(g_c\) by
    \[ g_c = d\, r^2 + s_c(r)^2 \hat{g} \geq \frac{s_c(r)^2}{r^2} (d\, r^2 + r^2 \hat{g}) =  \frac{s_c(r)^2}{r^2} g_0, \]
    where we used the fact that \(s_c(r) \leq r\) for \(c > 0\) and write \(g' \leq g''\) when \(g''-g'\) is positive semi-definite. 
    Since further \({s_c(r)^2}/{r^2} \geq {s_c(r_0)^2}/{r_0^2}\) for \(r \leq r_0\), the first part of the claim follows.

    For the second one, we argue analogously.
    This time, we bound
    \[ g \leq g_c = d\, r^2 + s_c(r)^2 \hat{g} \leq \frac{s_c(r)^2}{r^2} (d\, r^2 + r^2 \hat{g}) = \frac{s_c(r)^2}{r^2} g_0, \]
    where we used that \(s_c(r) \geq r\) for \(c \leq 0\).
    Since \({s_c(r)^2}/{r^2} \leq {s_c(r_0)^2}/{r_0^2}\) for \(r \leq r_0\), the second part follows.
\end{proof}

We can now give the proof of Lemma \ref{lem:normal}:

\begin{proof}[Proof of Lemma \ref{lem:normal}]
    The first claim follows from \cite[Corollary 6.13]{LeeRM}.
    From \cite[Theorem IX.6.1.]{Chavel} and \eqref{eq:radius_condition}, it follows that \(B_M(x_0,r)\) is convex, i.e., the minimizing geodesic between any two points \(x,y \in B_M(x_0,r)\) is unique and contained in \(B_M(x_0,r)\).
    Obviously, \(B_{\R^d}(0,r)\) is convex as well.
    It then follows using Theorem \ref{thm:metric_comparison} that
    \[ \frac{s_{K}(r)}{r} \de(x,y) \leq \varphi_* \d_M(x,y) \leq \frac{s_{(-K)}(r)}{r} \de(x,y), \]
    for all \(x,y \in B_{\R^d}(0,r)\).
    A Taylor expansion of the sine function gives \(\sin(t) \geq t - \frac{1}{6}t^3\), \(t\geq 0\), from which we obtain
    \[ \frac{s_{K}(r)}{r} = \frac{\sin(\sqrt{K}r)}{\sqrt{K}r} \geq 1 - \frac{1}{6} Kr^2 \geq 1-Kr^2, \]
    for \(r > 0\).
    Similarly, on sees that \(\sinh(t) \leq t + \frac{1}{6}\cosh(t)t^3\), \(t\geq 0\), which yields
    \[ \frac{s_{(-K)}(r)}{r} = \frac{\sinh(\sqrt{K}r)}{\sqrt{K}r} \leq 1 + \frac{1}{6}\cosh(\sqrt{K}r) Kr^2 \leq 1+Kr^2, \]
    for \(0<r \leq {\pi}/({2\sqrt{K}})\).
    The second claim follows immediately.

    For the third claim, recall that the volume can be expressed in \(\varphi\)-coordinates as
    \[ \varphi_* \mu_M(A) = \int_A \sqrt{\det(g)} dx. \]
    By Theorem \ref{thm:metric_comparison} and the previous bounds, \(g\) is bounded above and below (as a bilinear form) by \((1-Kr^2)\)
    (respectively \(1+Kr^2\)) times the euclidean inner product.
    This implies that \(\det(g)\) lies between \((1-Kr^2)^d\) and \((1+Kr^2)^d\), from which the claim follows.
    (Note that for symmetric matrices \(A,B\) with \(0 < A \leq B\), it holds that \(\det(A) \leq \det(B)\).
    Indeed, when \(A\) and \(B\) are diagonal, this is clear, and any two symmetric, positive definite matrices are simultaneously diagonizable as quadratic forms.)
\end{proof}

\section{Poisson--Voronoi cells are bounded}\label{app:bounded cells}

In this section, we show the following:

\begin{theorem}\label{thm:bdd-cells_continuous}
    Assume that there are positive constants \(r_0\) and \(b_0\) such that \(\mu(B_M(x,r_0)) > b_0\) for any \(x \in M\).
    Let \(\eta \sim \PPP(M,\mu)\).
    Then, almost surely, all cells in the Voronoi diagram induced by \(\eta\) are compact.
\end{theorem}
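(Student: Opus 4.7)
The plan is to reduce, via the Mecke formula for Poisson point processes, the boundedness of all Voronoi cells to the boundedness of a single Palm cell, and then to control this cell via an empty-ball covering argument that uses the uniform ball-volume lower bound. By the Mecke formula,
\begin{equation}
    \mathbb{E}\!\left[\sum_{x \in \eta \cap K} \mathbf{1}\{C(x;\eta)\text{ is unbounded}\}\right] = \int_K \mathbb{P}\big(C(x;\eta \cup \{x\})\text{ is unbounded}\big)\,\mu(\d x),
\end{equation}
for any bounded measurable $K \subset M$. Exhausting $M$ by bounded sets, it is enough to show that for each fixed $x_0 \in M$, the cell $C(x_0; \eta \cup \{x_0\})$ is bounded almost surely.

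If this cell is unbounded then, by the star-convexity of Voronoi cells with respect to their centers (Lemma~\ref{lem:conv}) and completeness of $M$, for every $R > 0$ there is a point $y \in M$ with $\dm(y, x_0) = R$ in the cell: intersect the minimizing geodesic from $x_0$ to any far point of the cell with $\partial B_M(x_0, R)$. For such $y$, every $z \in \eta$ satisfies $\dm(y, z) \geq R$, so $B_M(y, R) \cap \eta = \emptyset$. The uniform lower bound $\mu(B_M(\cdot, r_0)) \geq b_0$ together with a geodesic ray from $y$ (which exists by Hopf-Rinow, since $M$ is complete and non-compact) allows one to place $\lfloor R/(2 r_0)\rfloor$ disjoint balls of radius $r_0$ along the ray and inside $B_M(y, R)$, yielding $\mu(B_M(y, R)) \geq b_0 \lfloor R/(2 r_0)\rfloor$, so a given such empty-ball event has probability at most $e^{-b_0 R/(2 r_0) + O(1)}$.

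To handle the existence quantifier over $y$, I would cover $\partial B_M(x_0, R)$ by balls of radius $2 r_0$ centered on a maximal $2 r_0$-separated subset $\{z_i\}$ of $\partial B_M(x_0, R)$; since the balls $B_M(z_i, r_0)$ are then disjoint, the uniform volume bound gives $N_R \le \mu(B_M(x_0, R + r_0))/b_0$. When the empty-ball event occurs for $y \in B_M(z_i, 2 r_0)$, the triangle inequality implies $B_M(z_i, R - 2 r_0) \subset B_M(y, R)$ is empty, so a union bound yields
\begin{equation}
    \mathbb{P}\big(\exists\, y: \dm(y, x_0) = R,\; B_M(y, R) \cap \eta = \emptyset\big) \;\le\; \frac{\mu(B_M(x_0, R + r_0))}{b_0} \cdot e^{-b_0 R/(2 r_0) + O(1)}.
\end{equation}
The main obstacle is to close this estimate: one needs the volume growth of $\mu$ to be strictly slower than the exponential rate $b_0/(2 r_0)$. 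This is not automatic from the stated hypothesis alone, but in the setting of Theorem~\ref{thm:main}, where $\mu = \lambda \mu_M$ on a manifold of bounded sectional curvature, Bishop-Gromov volume comparison supplies a uniform exponential upper bound on $\mu_M(B_M(\cdot, R))$ with a fixed rate, while the polynomial lower bound $\mu_M(B_M(\cdot, r_0)) \gtrsim r_0^d$ for small balls allows one to take $r_0$ large enough (in dimension $d \ge 2$) for $b_0/(2 r_0)$ to dominate that rate. Letting $R \to \infty$ then gives $\mathbb{P}(C(x_0; \eta \cup \{x_0\}) \text{ is unbounded}) = 0$, as needed.
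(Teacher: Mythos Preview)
Your reduction via the Mecke formula and the observation that unboundedness of the cell forces an empty ball $B_M(y,R)\cap\eta=\emptyset$ for some $y$ on each sphere $\partial B_M(x_0,R)$ are both correct and match the paper's setup. The gap is in the union bound over the sphere. The theorem's only hypothesis is a uniform lower bound on the $\mu$-volume of small balls; it does \emph{not} assume any upper bound on the volume growth of large balls. Your estimate
\[
\P\big(\exists\, y\in \partial B_M(x_0,R):\ B_M(y,R)\cap\eta=\emptyset\big)\ \le\ \frac{\mu(B_M(x_0,R+r_0))}{b_0}\, e^{-b_0 R/(2r_0)+O(1)}
\]
is useless unless $\mu(B_M(x_0,R))$ grows more slowly than $e^{b_0 R/(2r_0)}$, and nothing in the hypothesis forces this. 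Your patch via bounded sectional curvature does not close the gap either: under $|\mathrm{sec}|\le K$ one has the exponential upper bound $\mu_M(B_M(\cdot,R))\lesssim e^{(d-1)\sqrt K\,R}$, but the only lower bound on the empty ball comes from your linear packing estimate $\mu_M(B_M(y,R))\ge b_0\lfloor R/(2r_0)\rfloor$, so you need $b_0/(2r_0)>(d-1)\sqrt K$. Since $b_0=\lambda V_K(r_0)$ with $r_0$ bounded by $\min\{r_{\mathrm{inj}},\pi/\sqrt K\}$, this inequality holds only for $\lambda$ above a threshold depending on $K$ and $d$; it fails for small $\lambda$. (The paper explicitly notes in Remark~\ref{rem:bdd-cells_curvature} that no lower curvature bound---hence no volume-growth bound---is needed.)

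The paper circumvents this entirely by replacing the crude union bound by a branching-type exploration in annuli of fixed width $kr_0$. For each discretized point $x'$ on a sphere $S_n$, the set $S'_{n+1}(x')$ of potential ``children'' on the next sphere consists of points whose $r_0$-balls are disjoint and sit inside a fixed region $A_{n+1}(x')$; hence the probability that this region is empty is at most $q^{|S'_{n+1}(x')|}$ with $q=e^{-b_0}$. The key identity is then
\[
\E|C_{n+1}|\ \le\ \Big(\sup_{l\ge 0} l\,q^{l}\Big)\cdot q^{(k-9)/2}\cdot \E|C_n|,
\]
where the factor $l\,q^l$ is \emph{uniformly bounded in $l$}. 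Thus the number of children can be arbitrarily large (fast volume growth) without spoiling the estimate; the extra factor $q^{(k-9)/2}$, coming from disjoint balls placed along a minimizing geodesic across the annulus, is made small by choosing the annulus width $k$ large. This self-limiting feature is exactly what is missing from your approach.
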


\begin{remark}\label{rem:bdd-cells_curvature}
    If all sectional curvatures of \(M\) are uniformly bounded from above and \(M\) has a positive global injectivity radius, then it follows from \eqref{eq:lower_bound_volume} that the conditions of the previous theorem are satisfied for \(\mu = \lambda \mu_M\), \(\lambda > 0\).
    Curiously, we do not need a lower bound on the curvature.
\end{remark}

\begin{remark}
    In \(\R^d\) it holds that the Voronoi diagram induced by a locally finite point set \(X\) contains no unbounded cells if and only if every point of \(X\) lies in the interior of the convex hull \(\operatorname{conv}(X)\) (cf.~\cite[Theorem 3.2.9]{voigt_diss}).
    In particular, this is the case when \(\operatorname{conv}(X) = \R^d\).
    This no longer holds true for general manifolds, in \(\H^d\) for example the appropriate condition on \(X\) is that there is no open horoball \(B\) such that \(B \cap X = \emptyset\) and \(\overline B \cap X \neq \emptyset\).
\end{remark}

\begin{remark}
    When $M$ has infinite diameter but finite volume, then trivially the Voronoi diagram associated to \(\eta \sim \PPP(M,\lambda \mu_M)\), \(\lambda > 0\), will almost surely contain at least one unbounded cell.
    In a similar vein, the same phenomenon occurs for manifolds with finite-volume ``spikes'' such as \(M \coloneqq \{(x,y,z) \in \R^3 \colon x^2+y^2=e^{-z}\}\).
\end{remark}

Before we give the proof, we consider a related discrete version of the problem.
The proof of the discrete version will make the continuous counterpart easier to understand.

Let \(G = (V,E)\) be a connected graph with vertex set \(V\) and edge set \(E\).
Assume that \(V\) is countable and that each vertex has finite degree.
Fix a parameter \(p\in(0,1)\) and independently declare each vertex \emph{occupied} with probability \(p\) and \emph{unoccupied} otherwise.
Let \(X\) be the set of occupied vertices.
In the \emph{graph Voronoi diagram}, the cell of an occupied vertex \(x\) is given by
\[ C(x;X) \coloneqq \{y \in V \colon \d(y,x) \leq \d(y,X)\}, \]
where \(\d(x,y)\) denotes the graph distance, i.e., the length of a shortest path from \(x\) to \(y\).
In the following, we also use the notation \(B(x,n)\) for a ball with center \(x\) and radius \(r\) w.r.t.\ the graph distance.

\begin{theorem}
    \label{thm:bdd-cells_discrete}
    Almost surely, all cells of the graph Voronoi diagram are finite.
\end{theorem}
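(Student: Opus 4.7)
The plan is a first-moment argument: I will show that $\E[|C(x; X)| \mid x \in X]$ is finite for every $x \in V$. Since $V$ is countable and $C(x; X) = \emptyset$ when $x \notin X$, this yields the almost-sure finiteness of all cells via a countable union over $x$.

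The starting point is the following characterization, which I will establish first: for $y \neq x$, one has $y \in C(x; X)$ if and only if $B(y, d(y, x) - 1) \cap X = \emptyset$. Indeed, the defining inequality $d(y, X) \geq d(y, x)$ forbids any occupied vertex from lying strictly closer to $y$ than distance $d(y, x)$. Since $x$ itself lies outside $B(y, d(y, x) - 1)$, the event depends only on $X \setminus \{x\}$ and is therefore independent of $\{x \in X\}$. Consequently
\[
\P(y \in C(x; X) \mid x \in X) = (1-p)^{|B(y, d(y, x) - 1)|},
\]
and, grouping by $n = d(y, x)$,
\[
\E[|C(x; X)| \mid x \in X] = 1 + \sum_{n \geq 1} \sum_{y \in S_n(x)} (1-p)^{|B(y, n - 1)|},
\]
where $S_n(x) = \{y \in V : d(y, x) = n\}$.

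The key geometric input is the lower bound $|B(y, n - 1)| \geq n$, obtained by observing that any geodesic $x = v_0, v_1, \ldots, v_n = y$ places the $n$ vertices $v_1, \ldots, v_n$ inside $B(y, n - 1)$. When $G$ has sub-exponential growth or exponential growth rate strictly below $-\log(1-p)$, the resulting upper bound $|S_n(x)|(1-p)^n$ is already summable in $n$. In the fast-growth regime I will strengthen the geometric bound: since the exponential growth rate of balls is an invariant of $G$ (via the translation inequality $|B(y, r + d(x, y))| \geq |B(x, r)|$), for $y \in S_n(x)$ the ball $B(y, n - 1)$ grows exponentially at the same rate as $|S_n(x)|$. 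The super-exponential decay of $(1-p)^{|B(y, n-1)|}$ then comfortably dominates the growth of $|S_n(x)|$, and the series converges.

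The main obstacle is making this convergence uniform across all locally finite, connected, infinite graphs; the subtle case is precisely when the exponential growth rate of $G$ is close to $-\log(1-p)$, where the crude geometric bound is borderline and one must carefully exploit the matching growth of $|B(y, n - 1)|$ for $y \in S_n(x)$. Once $\E[|C(x; X)| \mid x \in X] < \infty$ is established, the conclusion follows at once.
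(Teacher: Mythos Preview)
Your overall strategy—bound $\E[|C(x;X)|\mid x\in X]$ via the identity $\P(y\in C(x;X)\mid x\in X)=(1-p)^{|B(y,d(y,x)-1)|}$—is sound, and the first steps are correct. The gap is in your treatment of the fast-growth regime. Your key claim, that ``for $y\in S_n(x)$ the ball $B(y,n-1)$ grows exponentially at the same rate as $|S_n(x)|$'', is false in general. Take $G$ to be a binary tree rooted at $x$ with a single infinite ray attached at $x$: then $|S_n(x)|\asymp 2^n$, yet for the vertex $y$ on the ray at distance $n$ one has $|B(y,n-1)|=2n-1$, which is only linear. The translation inequality you invoke, $|B(y,r+d(x,y))|\geq |B(x,r)|$, lower-bounds balls of radius at least $n$ around $y$; it says nothing about $|B(y,n-1)|$, which is the quantity you need. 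In this particular example the series still converges (there is only one such $y$ per $n$), but that is for a different reason than the one you give, and your proposal supplies no substitute mechanism. As written, the argument does not cover inhomogeneous graphs where thin and fat branches coexist.

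The paper avoids this difficulty by \emph{not} attempting to bound the expected cell size. Instead it fixes a large annulus width $k=k(p)$, sets $C_0$ to be the sphere of radius $2k$, and recursively defines $C_{n+1}\subset S_{(n+3)k}$ as the vertices reachable from some $x'\in C_n$ through an unoccupied local annulus $A_{n+1}(x')$. The crucial estimate is
\[
\E|C_{n+1}|\;\leq\;\Big(\sup_{l\geq 0} l(1-p)^l\Big)\,(1-p)^{k-1}\,\E|C_n|,
\]
which comes from the inequality $|A_{n+1}(x')|\geq (k-1)+|S_{n+1}(x')|$: the $k-1$ path vertices contribute the factor $(1-p)^{k-1}$, while the local sphere $S_{n+1}(x')$, of whatever size $l$, contributes both the branching factor $l$ and an extra $(1-p)^l$, and $l(1-p)^l$ is bounded uniformly in $l$. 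Choosing $k$ large makes the product less than $1$, so $\E|C_n|\to 0$ and hence $\P(C\text{ unbounded})=0$. This two-scale device absorbs arbitrarily fast local growth into the bounded quantity $\sup_l l(1-p)^l$ and never needs any comparison between $|B(y,n-1)|$ and $|S_n(x)|$.
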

\begin{proof}
    Fix a vertex \(x_0 \in V\).
    Let
    \[ C \coloneqq \{y \in G \colon \d(y,x_0) \leq \d(y,X)\}.\]
    If \(x_0\) is occupied, then \(C\) is the Voronoi cell of \(x_0\).
    We show that \(C\) is almost surely bounded, the claim then follows by a union bound.

    For \(n \in \N_0\) let \(r(n) \coloneqq (n+2)k,\) where \(k = k(p) \in \N\) is a constant to be specified later.
    We define annuli
    \[A_0 \coloneqq \{y \in V \colon \d(x_0,y) \leq r(0)\} \quad \text{and} \quad A_{n+1} \coloneqq \{y \in V \colon r(n) < \d(x_0,y) \leq r(n+1)\},\]
    as well as `spheres'
    \[S_n \coloneqq \{y \in V \colon \d(x_0,y) = r(n)\}\]
    for \(n \in \N_0\).
    Further, for any \(n \in \N_0\) and \(x\in S_n\) we define
    \[A_{n+1}(x) \coloneqq \{y \in A_{n+1} \colon \d(x,y) \leq k\} \quad \text{and} \quad
    S_{n+1}(x) \coloneqq \{y \in S_{n+1} \colon \d(x,y) \leq k\}.\]
    Note that \(A_{n+1} = \cup_{x \in S_n} A_{n+1}(x)\) and \(S_{n+1} = \cup_{x \in S_n} S_{n+1}(x)\).

    We now define an exploration that reveals the state of the vertices in \(A_{n}\) in step \(n\) and gives us sets \(C_n \subset S_n\)
    that contain \(C \cap S_n\).
    We use the fact that the cardinality of \(C_n\) is dominated by the number of children at level \(n\) of a subcritical inhomogeneous branching process to obtain that \(C\) is almost surely bounded.

    We inductively define the sets \(C_n \subset S_n\), \(n\in\N_0\) by setting \(C_0 \coloneqq S_0\) and
    \[ C_{n+1} \coloneqq \bigcup_{x \in C_n,\, A_{n+1}(x) \text{ unoccupied}} S_{n+1}(x), \]
    where we call \(A_{n+1}(x)\) unoccupied when all of its vertices are unoccupied.

    We prove inductively that \(C \cap S_n \subset C_n\).
    The case \(n=0\) is clear.
    For the induction step, let \(y \in C \cap S_{n+1}\).
    Take an \(x\in S_n\) with \(\d(x,y)=k\), so in particular it holds that \(y \in S_{n+1}(x)\).
    Since \(y \in C\), it follows that \(B(y,r(n+1)-1)\) is unoccupied, which implies that \(B(x,r(n)-1) \subset B(y,r(n+1)-1)\) is unoccupied.
    At the one hand, this implies that \(x \in C \cap S_n\), and hence, by the induction hypothesis, \(x \in C_n\).
    On the other hand, since \(k \leq r(n)-1\), it follows that \(A_{n+1}(x)\) is unoccupied.
    Combining those observations, we get that \(y\in C_{n+1}\) and since \(y\) was arbitrary, \(C \cap S_{n+1} \subset C_{n+1}\) follows.

    Notice that if \(y \in C\), then any shortest path from \(x_0\) to \(y\) is contained in \(C\).
    So if \(C\) is unbounded, then \(C \cap S_n \neq \emptyset\) for any \(n \in \N_0\).
    We can thus bound
    \[ \P(\text{\(C\) unbounded}) \leq \P(C \cap S_n \neq \emptyset) \leq \P(C_n \neq \emptyset) \leq \E|C_n| \]
    for any \(n \in \N_0\).
    Note that for \(n \in \N_0\)
    \begin{align*}
        \E|C_{n+1}|
            &\leq \E \sum_{x \in C_n,\, A_{n+1}(x) \text{ unoccupied}} |S_{n+1}(x)|\\
            &= \sum_{x\in S_n} \P(x \in C_n,\, A_{n+1}(x) \text{ unoccupied})\, |S_{n+1}(x)|\\
            &= \sum_{x\in S_n} \P(x \in C_n) \P(A_{n+1}(x) \text{ unoccupied})\, |S_{n+1}(x)|\\
            &= \sum_{x\in S_n} \P(x \in C_n) (1-p)^{|A_{n+1}(x)|} |S_{n+1}(x)|\\
            &\leq \sum_{x\in S_n} \P(x \in C_n) (1-p)^{(k-1)+|S_{n+1}(x)|} |S_{n+1}(x)|\\
            &\leq \left(\sup_{l\in\N_0} l(1-p)^l \right) \cdot (1-p)^{k-1} \cdot \sum_{x\in S_n} \P(x \in C_n)\\
            &= \left(\sup_{l\in\N_0} l(1-p)^l \right) \cdot (1-p)^{k-1} \cdot \E|C_n|.
    \end{align*}
    In the third line we used that the event \(\{x\in C_n\}\) only depends on vertices in \(B(x_0,r(n))\), which is disjoint from \(A_{n+1}(x)\).
    In the fifth line, we used that when \(S_{n+1}(x) \neq \emptyset\), then \(|A_{n+1}(x)| \geq (k-1) + |S_{n+1}(x)|\).
    Indeed, if \(y \in S_{n+1}(x)\), then there exists a shortest path \(x = z_0,z_1,\ldots,z_k = y\) from \(x\) to \(y\).
    It is not hard to see that the vertices \(z_1,\ldots,z_{k-1}\) are contained in \(A_{n+1}(x) \setminus S_{n+1}(x)\).
    Obviously, \(S_{n+1}(x)\) is also contained in \(A_{n+1}(x)\), hence this set has at least \((k-1) + |S_{n+1}(x)|\) elements.

    Since \(\sup_{l\in\N_0} l(1-p)^l < \infty\) we can choose \(k\) sufficiently large (depending on \(p\)) such that \(\E|C_{n+1}| \leq \frac{1}{2} \E|C_n|\) for all \(n\in\N_0\).
    Hence,
    \[ \P(\text{\(C\) unbounded}) \leq \lim_{n\to\infty} \E|C_n| = 0. \]
\end{proof}

Theorem \ref{thm:bdd-cells_continuous} can be proved by adapting the previous argument to the continuous setting.
Before stating the proof, we recall the star-convexity of the Voronoi cells.

\begin{lemma}\label{lem:conv}
    For any locally finite point set $\eta \subset M$, $x\in \eta$ and $u\in C(x;\eta)$, any minimizing geodesic between $u$ and $x$ is contained in $C(x;\eta)$.
\end{lemma}

\begin{proof} Let $u\in C(x;\eta)$ and let $w$ be a point on a minimizing geodesic between $u$ and $x$. Let $z\in\eta$. We have by the triangle inequality
	\[\d_M(u,w)+\d_M(w,z)\ge \d_M(u,z)\ge \d_M(u,x)=\d_M(u,w)+\d_M(w,x)\]
	yielding that $\d_M(w,x)\le \d_M(w,z)$ and $w\in C(x;\eta)$.
\end{proof}}

\begin{proof}[Proof of Theorem \ref{thm:bdd-cells_continuous}]
    Since $M$ is complete, any closed and bounded subset $K\subset M$ is compact (this is a consequence of the Hopf--Rinow Theorem, cf.~\cite[Corollary I.7.1.]{Chavel}).
    The closedness of the Voronoi cells is obvious from their definition, so it remains to show that they are almost surely bounded.
    By the Mecke formula (see, e.g., \cite[Theorem 4.1]{LastPenrosePP}), it holds that
    \[ \E \sum_{x \in \eta} \one\{C(x;\eta) \text{ unbounded}\} = \int_M \P(C(x;\eta) \text{ unbounded}) \mu(dx). \]
    It thus suffices to show that for any fixed \(x_0 \in M\),
    \[ \P(C(x_0;\eta) \text{ unbounded}) = 0 \]
    holds.
    In the following, we thus consider a fixed \(x_0\).
    For any \(x \in M\) it holds that
    \[ \P(B_M(x,r_0) \cap \eta = \emptyset) = e^{-\mu(B_M(x,r_0))} \leq e^{-b_0} \eqqcolon q. \]
    For \(n \in \N_0\) let
    \( r(n) \coloneqq (n+10)  k  r_0, \)
    where \(k = k(q) \geq 10\) is a constant to be specified later.
    Further let
    \[ A_0 \coloneqq B_M(x_0, r(0) + r_0) \quad \text{and} \quad A_{n+1} \coloneqq B_M(x_0, r(n+1)+r_0) \setminus B_M(x_0, r(n)+r_0), \]
    as well as
    \[ S_n \coloneqq \{y \in M \colon \d_M(x_0,y) = r(n)\}. \]
    To adapt the previous argument, we need to discretize the problem.
    To this end, for each \(n \in \N_0\), we pick a maximal subset \(S'_n \subset S_n\) such that \(\d_M(x',y')\geq 2r_0\)
    for any distinct \(x',y' \in S'_n\).
    Since \(S_n\) is compact, \(S'_n\) is  finite.
    For any \(n\in\N_0\) and \(x' \in S'_n\), we then define
    \[ A_{n+1}(x') \coloneqq A_{n+1} \cap B_M(x',(k+5)r_0) \quad \text{and} \quad
    S'_{n+1}(x') \coloneqq \{y' \in S'_{n+1} \colon B_M(y',r_0) \subset A_{n+1}(x')\}. \]
    We then inductively define sets \(C_n \subseteq S'_n\), \(n\in\N_0\), by setting \(C_0 \coloneqq S'_0\) and 
    \[ C_{n+1} \coloneqq \bigcup_{x' \in C_n,\, A_{n+1}(x') \cap \eta = \emptyset} S'_{n+1}(x'). \]
    We now show that for all \(n \in \N_0\) it holds that
    \begin{equation}\label{eq:inclusion_inductive}
        \{ y' \in S'_n \colon \exists y \in C(x_0;\eta) \cap S_n: \d_M(y',y)<2r_0 \} \subset C_n.
    \end{equation}
    The proof is by induction.
    The case \(n=0\) is trivial.
    For the induction step, take \(y' \in S'_{n+1}\) so that \(\d_M(y',y) <2r_0\) for some \(y \in C(x_0;\eta) \cap S_{n+1}\).
    Let \(\gamma\) be a minimizing geodesic joining \(x_0\) and \(y\).
    By Lemma \ref{lem:conv}, \(\gamma\) is contained in \(C(x_0;\eta)\).
    Taking the intersection of \(\gamma\) with \(S_n\) gives us a point \(x \in C(x_0;\eta) \cap S_n\) with \(\d_M(x,y) = k r_0\).
    By construction, there exists a point \(x' \in S'_n\) with \(\d_M(x',x)<2r_0\).
    This point belongs to \(C_n\), by the induction hypothesis.
    By the triangle inequality, \(\d_M(x',y') < (k+4)r_0\), from which we obtain \(y' \in S'_{n+1}(x')\).
    Further, \(y \in C(x_0;\eta) \cap S_{n+1}\) implies that \(\eta\cap B_M(y,r(n+1)) = \emptyset\).
    Using the triangle inequality again, we see that \(\d_M(x',y) < (k+2)r_0\), which yields
    \[ A_{n+1}(x') \subset B_M(x',(k+5)r_0) \subset B_M(x', r(n+1) - (k+2)r_0) \subset B_M(y,r(n+1)). \]
    Thus, \(A_{n+1}(x') \cap \eta = \emptyset\).
    We then conclude that \(y' \in C_{n+1}\).
    Since \(y'\) was arbitrary, \eqref{eq:inclusion_inductive} follows.

    Let \(n \in \N_0\), \(x'\in S_n\) and assume that \(S'_{n+1}(x') \neq \emptyset\).
    We now explain how to get an upper bound on \(\P(A_{n+1}(x') \cap \eta = \emptyset)\).
    Take \(y' \in S'_{n+1}(x')\) and let \(\gamma\) be a minimizing geodesic joining \(x'\) and \(y'\).
    Since \(\d_M(x_0,x') = r(n)\) and \(\d_M(x_0,y')=r(n+1)\), we have that \(\d_M(x',y')\geq k r_0\).
    Further, by the definition of \(S'_{n+1}(x')\), it holds that \(\d_M(x',y') \leq (k+5)r_0\).
    For \(m=1,\ldots,\lfloor(k-7)/2\rfloor\), let \(z_m\) be the point on \(\gamma\) with
    \[ \d_M(x',z_m) = (5+2m)r_0. \]
    It then holds that
    \[ \d_M(x_0, z_m) \leq \d_M(x_0,x') + \d_M(x',z_m) \leq r(n) + (k-2)r_0 = r(n+1) -2r_0, \]
    and
    \[\begin{aligned}
        \d_M(x_0, z_m) \geq \d_M(x_0,y') - \d_M(y',z_m) &= \d_M(x_0,y') - (\d_M(x',y')-\d_M(x',z_m))\\
        &\geq r(n+1) - (k+5)r_0 + 7 r_0 = r(n) + 2 r_0.
    \end{aligned}\]
    It is then easy to verify that the balls \(B_M(z_1,r_0),\ldots,B_M(z_m,r_0)\) are disjoint and contained in \(A_{n+1}(x')\).
    They are further disjoint from the balls \(B_M(y'', r_0)\), \(y'' \in S'_{n+1}(x')\), which are also contained in  \(A_{n+1}(x')\).
    From the basic properties of the Poisson process it follows that
    \[ \P(A_{n+1}(x') \cap \eta = \emptyset) \leq q^{\lfloor(k-7)/2\rfloor + |S'_{n+1}(x')|} \leq q^{(k-9)/2 + |S'_{n+1}(x')|}. \]
    For \(n \in \N_0\) we now get
    \begin{align*}
        \E|C_{n+1}|
        &\leq \E \sum_{x' \in C_n,\, A_{n+1}(x') \cap \eta = \emptyset} |S'_{n+1}(x')|\\
        &= \sum_{x'\in S_n'} \P(x' \in C_n,\, A_{n+1}(x') \cap \eta = \emptyset)\, |S'_{n+1}(x')|\\
        &= \sum_{x'\in S_n'} \P(x' \in C_n) \P(A_{n+1}(x') \cap \eta = \emptyset)\, |S'_{n+1}(x')|\\
        &\leq \sum_{x'\in S_n'} \P(x' \in C_n) q^{(k-9)/2 + |S'_{n+1}(x')|} |S'_{n+1}(x')|\\
        &\leq \left(\sup_{l\in\N_0} l q^l \right) \cdot q^{(k-9)/2} \cdot \sum_{x'\in S'_n} \P(x' \in C_n)\\
        &= \left(\sup_{l\in\N_0} l q^l \right) \cdot q^{(k-9)/2} \cdot \E|C_n|,
    \end{align*}
    where we used that the events \(\{x' \in C_n\}\) and \(\{A_{n+1}(x') \cap \eta = \emptyset\}\) are independent by construction.
    As in the proof of Theorem \ref{thm:bdd-cells_discrete}, we then get \(\lim_{n\to\infty} \E|C_n| = 0\) when \(k\) is sufficiently large.
    Note that \(C(x_0;\eta)\) is star-shaped w.r.t.\ \(x_0\), so in particular, when it is not contained in \(B_M(x_0,r(n))\),
    then it intersects \(S_n\) in some point \(y\).
    By construction, this implies that there is a point \(y' \in S'_n\) with \(\d_M(y',y)< 2r_0\), and hence \(y' \in C_n \neq \emptyset\).
    Hence, we get that
    \[ \P(C(x_0;\eta) \subset B_M(x_0,r(n))) \geq 1 - \P(C_n \neq \emptyset) \geq 1 - \E|C_n| \to 1, \]
    as \(n \to \infty\).
\end{proof}

\section{Non-trivial phase transition}\label{app:phase-transition}

In this section, we show that if $M$ satisfies the assumptions of Theorem~\ref{thm:main} and has superlinear growth, then $$0<p_c(M,\lambda)\leq p_u(M,\lambda)<1,$$ for \emph{all} $\lambda>0$. The proof is based on renormalization arguments, which bear similarities with those used in Section~\ref{sec:fine_graining}. However, since here we need to deal with arbitrarily low intensities, the embedded graphs used are taken to be sparse, i.e.~$G_r$ for $r$ large. For that reason, we say that it is a coarse-graining argument, in contrast with the fine-graining one of Section~\ref{sec:fine_graining}. An important difference here is the fact that the degrees of $G_r$ can diverge (even exponentially) as $r\to \infty$, forcing us to have a quantitative control of the probabilities of the local events used in the renormalization. This is also why we do not rely on \cite{LigSchSta97} here.

For $x\in M$ and $r>0$, define $$v(x,r) \coloneqq \mu_M\big(B_M(x,r)\big).$$ Also, let $$\overline{v}(r) \coloneqq \sup_{x\in M}v(x,r) \quad \text{and} \quad \underline{v}(r) \coloneqq \inf_{x\in M} v(x,r).$$
In this section, we assume  that $M$ has superlinear growth, meaning that 
\begin{equation}
\label{eq:superlinear}
    \lim_{r\rightarrow \infty} \underline{v}(r)/r =\infty. 
\end{equation}
We will use the fact that if $M$ satisfies conditions \ref{thm cond: global injectivity radius} and \ref{thm cond: uniform bound on curvature} of Theorem \ref{thm:main}, then by \eqref{eq:upper_bound_volume} there exists $c>0$ such that 
\begin{equation}\label{eq:atmostexpgrowth}
   \overline{v}(r)\leq \exp(cr), \quad \text{for all \(r \geq 0\)}.
\end{equation}

Recall that $\overline{\eta}(\lambda,p)=(\eta_w(\lambda,p),\eta_b(\lambda,p))$ denotes a pair of independent Poisson point processes with intensities $p\lambda \mu_M$ and $(1-p)\lambda \mu_M$, respectively, and $\pazocal{V}_M(\lambda,p)=\pazocal{V}_M(\overline{\eta}(\lambda,p))$ is the white region of the corresponding (colored) Voronoi tessellation. Also recall the embedded graphs  $G_r$ defined in Section \ref{sec:geometry_G_ep}, which satisfy the property that any point $x\in M$ is within distance $2r$ of a vertex of $G_r$ and every pair of vertices in $G_r$ are at least $r$ apart.

We start by proving the existence of a subcritical phase -- see \cite[Lemma 6.5]{BS01} for a similar argument, showing that $p_c(\H^2,\lambda)>0$ for all $\lambda>0$.

\begin{theorem}\label{thm:p_c>0}
    Suppose $M$ satisfies conditions \ref{thm cond: global injectivity radius} and \ref{thm cond: uniform bound on curvature} of Theorem \ref{thm:main} as well as the superlinear growth condition \eqref{eq:superlinear}. Then for all $\lambda>0$ we have $p_c(M, \lambda)>0$.
\end{theorem}

\begin{proof}
Fix $\lambda>0$ and a large enough constant $r>0$ to be determined later. For $x\in M$, consider the event 
$$A_x\coloneqq \{ \eta_b(\lambda,p)\cap B_M(x,r) = \emptyset \} \cup \{ \eta_w(\lambda,p) \cap B_M(x,5r) \neq \emptyset \}.$$
By the definition of \(\overline{\eta}(\lambda,p)\),
$$\mathbb{P}(\eta_b(\lambda,p)\cap B_M(x,r) = \emptyset)=e^{-\lambda(1-p) v(x,r)} \leq e^{-\lambda(1-p)\underline{v}(r)},$$
and
$$\mathbb{P}(\eta_w(\lambda,p) \cap B_M(x,5r) \neq \emptyset)=1-e^{-\lambda p v(x,5r)} \leq 1-e^{-\lambda p \overline{v}(5r)}.$$
Therefore, we can choose $p=p(r)>0$ small enough such that, for every $x\in M$,
\begin{equation}\label{eq:bound_Ax_sub}
	\mathbb{P}(A_x)\leq e^{-\frac{\lambda}{2} \underline{v}(r)}.
\end{equation}

Note that on the complementary event $A_x^c$, every cell intersecting $B_M(x,2r)$ is black. In particular, if $\pazocal{V}_M(\lambda,p)$ has an unbounded white connected component, then the site percolation model on $G_r$ given by
$$Z(x) \coloneqq \mathbbm{1}_{A_x},~~~x\in G_r,$$
percolates. Therefore, it is sufficient to show that $Z$ does not percolate to conclude that $p_c(\lambda)\geq p>0$.
    
Now, fix $x_0\in G_r$ and assume that $x_0$ belongs to an infinite component of $Z$. Then there exists an infinite self-avoiding path $x_0,x_1,\ldots$ in $G_r$ which is $Z$-open. In order to use independence, we now extract an ``almost-path'' with mutual distance at least $10r$. Define $(k_i)_{i\geq 0}$ inductively by $k_0=0$ and for $i\geq 0$, $k_{i+1}=\sup \{k\geq 0: \d_M(x_{k},x_{k_i})\leq 14r\}$. Set $y_i=x_{k_i}$. Then we certainly have for all $i\geq 0$, $\d_M(y_i, y_{i+1})\leq 14r$ and $\d_M(y_i, y_j)\geq 10r$ whenever $i\neq j$. In particular, all events $A_{y_i}$, \(i \geq 0\), are independent.  For $n\geq 1$, the number of possible sequences $y_1,\ldots, y_n$ as above is at most 
$$\big(\sup_{y\in M} |B_M(y,14r)\cap G_r|\big)^n\leq \left(\frac{\overline{v}(15r)}{\underline{v}(r)}\right)^n.$$ 
Therefore, by \eqref{eq:bound_Ax_sub} and a union bound, we obtain 
    \begin{equation}
        \mathbb{P}(\lr{}{Z}{x_0}{\infty})
        \leq \left(\frac{\overline{v}(15r)}{\underline{v}(r)}\right)^n  e^{-\frac{\lambda}{2} \underline{v}(r)n},
    \end{equation}
    for all $n\geq 1$. By the superlinear and subexponential growth bounds \eqref{eq:superlinear} and \eqref{eq:atmostexpgrowth}, we can take $r$ large enough so that $\frac{\overline{v}(15r)}{\underline{v}(r)} e^{-\frac{\lambda}{2} \underline{v}(r)}<1$, thus concluding the proof.
\end{proof}

\begin{theorem}\label{thm:p_c<1}
    Suppose $M$ satisfies all four conditions of Theorem \ref{thm:main} as well as the superlinear growth condition \eqref{eq:superlinear}. Then for all $\lambda>0$ we have $p_u(M, \lambda)<1$.
\end{theorem}

\begin{proof}
    By Theorem~\ref{thm:bound on p_u for Voronoi} and Theorem~\ref{thm:p_c>0}, it is enough to show that $p_c(M,\lambda)<1$. Fix $\lambda>0$ and a large enough constant $r>0$ to be determined later. For $x\in M$, consider the event 
    $$A_x\coloneqq \{ \eta_w(\lambda,p)\cap B_M(x,r) \neq \emptyset \} \cap \{ \eta_b(\lambda,p) \cap B_M(x,5r) = \emptyset \}.$$
    As in the proof of Theorem~\ref{thm:p_c>0},  we can choose $p=p(r)<1$ close enough to $1$ so that 
    \begin{equation}\label{eq:bound_Ax_sup}
    	\mathbb{P}(A_x)\geq 1-e^{-\frac{\lambda}{2} \underline{v}(r)}.
    \end{equation}

	Note that on the event $A_x$, every cell intersecting $B_M(x,2r)$ is white. In particular, if the site percolation model on $G_r$ given by
	$$Z(x) \coloneqq \mathbbm{1}_{A_x},~~~x\in G_r,$$
	percolates, then $\pazocal{V}_M(\lambda,p)$ has an unbounded white connected component. Therefore, it is sufficient to show that $Z$ does percolate to conclude that $p_c(\lambda)\leq p<1$.
    
    Now, fix $x_0\in G_r$, $n\geq1$, and assume that every component of $Z$ intersecting $B_{G_r}(x_0,n)$ is finite. Then there exists a $Z$-closed minimal cutset separating $B_{G_r}(x_0,n)$ from infinity in $G_r$. By proceeding as in the proof Lemma~\ref{lem:cutsets->paths} and applying the reduction procedure in the proof of Theorem~\ref{thm:p_c>0}, there exist constants $\tilde{c}_0$, $\tilde{C}_0$ depending on $r$ and $x_0$, a constant $\tilde{C}_1$ independent of $r$ and $x_0$, and subsets $(\tilde{V}^{x_0}_k)_{k\geq1}$ of $V_r$ with $|\tilde{V}^{x_0}_k|\leq \tilde{C}_0e^{\tilde{C_1}rk}$, such that the following holds:  every minimal cutset separating $B_{G_r}(x_0,n)$ from infinity contains a sequence $y_0,y_1,\ldots,y_{k}$ satisfying $y_0\in \tilde{V}^{x_0}_k$ with $k\geq \tilde{c}_0 \log n-\tilde{C}_0$, $\d_M(y_i, y_{i+1})\leq 14r$ for all $i$ and $\d_M(y_i, y_j)\geq 10r$ whenever $i\neq j$. As in the proof of Theorem~\ref{thm:p_c>0}, for every $k\geq 1$ and $y_0\in \tilde{V}_k^{x_0}$, the number of possible sequences $y_1,\ldots, y_k$ as above is at most $\left(\frac{\overline{v}(15r)}{\underline{v}(r)}\right)^k$, and for each such sequence the events \(A_{y_i}\), \(1 \leq i \leq k\), are independent. 
    Therefore, by \eqref{eq:bound_Ax_sup} and a union bound, we obtain 
    \begin{equation}
    	\mathbb{P}(\nlr{}{Z}{B_{G_r}(x_0,n)}{\infty})
    	\leq \sum_{k\geq  \tilde{c}_0 \log n-\tilde{C}_0} \tilde{C}_0e^{\tilde{C_1}rk} \left(\frac{\overline{v}(15r)}{\underline{v}(r)}\right)^k  e^{-\frac{\lambda}{2} \underline{v}(r)k},
    \end{equation}
    for all $n\geq 1$. By the superlinear and subexponential growth bounds \eqref{eq:superlinear} and \eqref{eq:atmostexpgrowth}, we can take $r$ large enough so that $e^{\tilde{C_1}r}\frac{\overline{v}(15r)}{\underline{v}(r)} e^{-\frac{\lambda}{2} \underline{v}(r)}<1$, and then $n$ large enough so that the above sum is strictly smaller than $1$, thus concluding the proof.
\end{proof}

\bibliographystyle{alpha}

\end{document}